\documentclass[11pt]{amsart}

\pdfoutput=1

\usepackage[text={420pt,660pt},centering]{geometry}

\usepackage{cancel}
\usepackage{esint,amssymb} 
\usepackage{graphicx}
\usepackage{MnSymbol}
\usepackage{mathtools} 
\usepackage[colorlinks=true, pdfstartview=FitV, linkcolor=blue, citecolor=blue, urlcolor=blue,pagebackref=false]{hyperref} \usepackage{microtype}

\usepackage{bm}
\usepackage{dsfont}
\usepackage{mathrsfs}
\usepackage{xcolor}
\usepackage{enumitem}

\parskip= 2pt

\definecolor{darkgreen}{rgb}{0,0.5,0}
\definecolor{darkblue}{rgb}{0,0,0.7}
\definecolor{darkred}{rgb}{0.9,0.1,0.1}

\newtheorem{proposition}{Proposition}
\newtheorem{theorem}[proposition]{Theorem}
\newtheorem{lemma}[proposition]{Lemma}

\newtheorem{result}[proposition]{Result}
\theoremstyle{remark}

\newtheorem{remark}[proposition]{Remark}

\theoremstyle{definition}
\newtheorem{definition}[proposition]{Definition}

\newtheorem*{theorem*}{Theorem}

\numberwithin{equation}{section}
\numberwithin{proposition}{section}
\numberwithin{figure}{section}
\numberwithin{table}{section}

\newcommand{\N}{\mathbb{N}}

\newcommand{\R}{\mathbb{R}}

\newcommand{\E}{\mathbb{E}}

\newcommand{\eps}{\varepsilon}

\renewcommand{\leq}{\leqslant}
\renewcommand{\geq}{\geqslant}

\renewcommand{\subset}{\subseteq}
\renewcommand{\bar}{\overline}
\renewcommand{\tilde}{\widetilde}

\newcommand{\Ll}{\left}
\newcommand{\Rr}{\right}
\renewcommand{\d}{\mathrm{d}}

\renewcommand{\fint}{\strokedint}

\newcommand{\mcl}{\mathcal}

\DeclareMathOperator{\tr}{tr}

\newcommand{\la}{\left\langle}
\newcommand{\ra}{\right\rangle}

\renewcommand{\H}{\mathsf{H}}
\newcommand{\pj}{\mathsf{p}}
\newcommand{\lf}{\mathsf{l}}
\renewcommand{\j}{{(j)}}
\renewcommand{\S}{S}

\renewcommand{\div}{\mathsf{div}}

\newcommand{\cH}{{L^2}}
\newcommand{\C}{\mathcal{Q}}
\newcommand{\D}{{D}}

\newcommand{\cL}{{\mathcal{L}}}
\newcommand{\sF}{{\mathsf{F}}}
\newcommand{\sP}{\mathscr{P}}
\newcommand{\HJ}{\mathrm{HJ}}
\newcommand{\bxi}{\boldsymbol{\xi}}

\newcommand{\bnabla}{\boldsymbol{\nabla}}
\newcommand{\supp}{\mathsf{supp}\,}
\newcommand{\seq}{\mathsf{seq}}
\newcommand{\itr}{\mathsf{int}}
\newcommand{\bsigma}{\boldsymbol{\sigma}}

\newcommand{\rv}[1]{}

\begin{document}

\author[Hong-Bin Chen]{Hong-Bin Chen}
\address[Hong-Bin Chen]{Institut des Hautes \'Etudes Scientifiques, France}
\email{hbchen@ihes.fr}

\keywords{Hamilton--Jacobi equation, infinite-dimensional PDE, viscosity solution, envelope representation, spin glass}
\subjclass[2020]{35F21, 49L25, 82B44}

\title{Envelope representation of Hamilton--Jacobi equations from spin glasses}

\begin{abstract}
Recently, \cite{chen2023free} demonstrated that, if it exists, the limit free energy of possibly non-convex spin glass models must be determined by a characteristic of the associated infinite-dimensional non-convex Hamilton--Jacobi equation. In this work, we investigate a similar theme purely from the perspective of PDEs. Specifically, we study the unique viscosity solution of the aforementioned equation and derive an envelope-type representation formula for the solution, in the form proposed by Evans in \cite{evans2014envelopes}. The value of the solution is expressed as an average of the values along characteristic lines, weighted by a non-explicit probability measure. The technical challenges arise not only from the infinite dimensionality but also from the fact that the equation is defined on a closed convex cone with an empty interior, rather than on the entire space. In the introduction, we provide a description of the motivation from spin glass theory and present the corresponding results for comparison with the PDE results.
\end{abstract}

\maketitle

\section{Introduction}

We first review \rv{some} background on spin glass and describe the Hamilton--Jacobi equation approach in Sections~\ref{s.background_spin_glass} and~\ref{s.HJ_eqn_approach}. We emphasize that they only serve as the motivation for this work which focuses on the PDE aspects of the equation. In particular, no results from \rv{spin glass theory} are used. 
Then, in Section~\ref{s.envelop}, we state informally our main results on the envelope representation of the viscosity solution of the equation relevant in the spin glass setting. Lastly in Sections~\ref{s.rel_results} and~\ref{s.outline}, we briefly describe related results and give an outline of this work.

\subsection{Background on spin glass}\label{s.background_spin_glass}
A spin glass is a model of disordered magnetic materials in statistical physics. We describe a general vector spin glass model and then briefly review some history. Let $\D \in \N$ and let $P$ be a finite positive Borel measure on $\R^\D$ with a bounded support. We interpret $P$ as the distribution of each single spin, which takes value in $\R^\D$. For each $N\in\N$ denoting the size of the system (or the number of spins), we denote by $\sigma =(\sigma_{\bullet 1},\dots,\sigma_{\bullet N})\in (\R^\D)^N$ the spin configuration, where every single spin $\sigma_{\bullet i} = (\sigma_{di})_{1\leq d\leq \D}$ is viewed as a column vector. In this way, we also view $\sigma$ as a matrix in $\R^{\D\times N}$. For each spin configuration $\sigma$, we associate a real-valued quantity $H_N(\sigma)$ called the Hamiltonian, which is random in the spin glass model. A good way to define this quantity in general is to define it as a centered Gaussian field $(H_N(\sigma))_{\sigma\in\R^{\D\times N}}$ indexed by all spin configurations. This field is uniquely determined by its covariance: for any two spin configurations $\sigma,\sigma'$,
\begin{align}\label{e.H_N(sigma)}
    \E \Ll[H_N(\sigma)H_N(\sigma')\Rr]= N\xi \Ll(\sigma\sigma'^\intercal/N\Rr)
\end{align}
where $\xi:\R^{\D\times\D}\to\R$ is a fixed smooth function independent of $N$ and $\sigma'^\intercal$ is the matrix transpose of $\sigma'$.

For each $\beta\geq 0$ interpreted as the inverse temperature, we consider the free energy at size $N$:
\begin{align*}
    \mathsf{F}_N(\beta) =-\frac{1}{N} \E \log \int \exp\Ll(\beta H_N(\sigma)\Rr) \otimes_{i=1}^N \d P(\sigma_{\bullet i})
\end{align*}
where $\E$ averages over the randomness in $H_N(\sigma)$. We point out that the minus sign on the right-hand side is often omitted in mathematical literature on spin glass. The goal is to understand the limit of $\mathsf{F}_N(\beta)$ as $N$ tends to infinity for each fixed $\beta\geq 0$.

The most classical model is the Sherrington--Kirkpatrick (SK) model~\cite{sherrington1975solvable} where $\D=1$, $P= \delta_{-1}+\delta_{+1}$, and $\xi = |\cdot|^2$. In this case, $H_N(\sigma)$ can be constructed explicitly as
\begin{align*}
    H_N(\sigma) =\frac{1}{\sqrt{N}} \sum_{i,j=1}^N g_{ij} \sigma_i\sigma_j
\end{align*}
where $\sigma_i = \sigma_{\bullet i}$ is real-valued as $\D=1$ and $(g_{ij})$ is a collection of independent standard Gaussian random variables. In the sense of identifying the limit free energy, the SK model is considered solved. Parisi \cite{parisi1979infinite,parisi1980order,parisi1980sequence} (also see~\cite{parisi2021lecture}) first proposed the correct infinite-dimensional variational formula for the limit, which is now called the Parisi formula:
\begin{align}\label{e.parisi_sF}
    \lim_{N\to\infty} \mathsf{F}_N(\beta) = \sup_{\mu}\sP_\beta(\mu)
\end{align}
where $\sup$ is taken over probability measures $\mu$ on $[0,1]$ and $\sP_\beta$ is the so-called Parisi functional (usually the Parisi formula is written as an $\inf$; it is a $\sup$ here due to the extra minus sign in the definition of $\mathsf{F}_N(\beta)$).
On the rigorous side, Guerra and Toninelli~\cite{guerra2002thermodynamic} showed that $\lim_{N\to\infty}\mathsf{F}_N(\beta)$ exists, then Guerra \cite{gue03} gave the lower bound of $\lim_{N\to\infty}\mathsf{F}_N(\beta)$ by the Parisi formula, and later Talagrand~\cite{Tpaper} gave the matching upper bound. Panchenko \cite{pan} gave another proof of the upper bound using deeper results \cite{pan.aom} about the structure of the asymptotic Gibbs measure. By this approach, Panchenko was able to identify the limit free energy with a generalized Parisi formula in more general settings~\cite{pan.multi,pan.potts,pan.vec} as long as the function $\xi$ is convex.
We refer to~\cite{mourrat2024informal} for a short informal introduction to the Parisi formula.

However, there are interesting and important models with non-convex $\xi$. One of the simplest of such is the bipartite spin glass with $\D=2$, $P = (\delta_{-1}+\delta_{+1})^{\otimes 2}$, and $\xi(a) = a_{12}a_{21}$ for every $a=(a_{ij})_{1\leq i,j\leq 2}$. We can construct $H_N(\sigma)$ as
\begin{align*}
    H_N(\sigma) = \frac{1}{\sqrt{N}}\sum_{i,j=1}^N g_{ij}\sigma_{1,i}\sigma_{2,j}
\end{align*}
where $(g_{ij})$ still consists of independent standard Gaussians. We can think of the Hamiltonian in the bipartite model as arranging $\pm1$-valued spins into two layers (the two row-vectors in $\sigma$) and only counting the interaction between spins on different layers. This is in contrast to the SK model where every spin interacts with every other spin. Here, the function $\xi$ is not a convex function and the successful theory built around the Parisi formula is not applicable. 
It is explained in~\cite[Section~6]{mourrat2021nonconvex} that the limit free energy cannot be represented by a saddle-point variational formula or by arrangements of $\inf$'s and $\sup$'s. 

In general, it is not even known whether the limit of $\mathsf{F}_N(\beta)$ exists since Guerra and Toninelli's subadditive argument~\cite{guerra2002thermodynamic} breaks down when $\xi$ is not convex. In general, there is no prediction for the limit in terms of a variational formula. Hence, a new perspective is needed.

\subsection{Hamilton--Jacobi equation approach}\label{s.HJ_eqn_approach}

Throughout, for any two matrices or vectors $a$ and $b$ of the same dimension, we write $a\cdot b = \sum_{ij}a_{ij}b_{ij}$ as the entry-wise inner product and $|a|=\sqrt{a\cdot a}$. We denote by $\S^\D$ the linear space of $\D\times\D$ real symmetric matrices and $\S^\D_+\subset \S^\D$ the set of symmetric positive semi-definite matrices. For any matrix $a$ in $\S^\D_+$, we denote by $\sqrt{a}$ its matrix square root.

Let $H_N(\sigma)$ be given as in~\eqref{e.H_N(sigma)}.
For $N\in\N$, $t\geq0$, and $h\in\S^\D_+$, we consider the following Hamiltonian:
\begin{align}\label{e.H^t,h_N(sigma)=}
    H^{t,h}_N(\sigma) = \sqrt{2t}H_N(\sigma) - t N\xi\Ll(\sigma\sigma^\intercal/N\Rr)+ \sum_{i=1}^N \Ll(\sqrt{2h} z_i\Rr)\cdot \sigma_{\bullet i} - h\cdot \Ll(\sigma\sigma^\intercal\Rr)
\end{align}
where $(z_i)$ is a collection of independent standard Gaussian vectors in $\R^\D$.
We think of $\sqrt{2t}$ as a reparametrization of $\beta$ and $\sum_{i=1}^N \Ll(\sqrt{2h} z_i\Rr)\cdot \sigma_{\bullet i}$ as a Gaussian external field with intensity $h$ added to the system. The terms $t N\xi\Ll(\sigma\sigma^\intercal/N\Rr)$ and $h\cdot \Ll(\sigma\sigma^\intercal\Rr)$ are exactly one half of the variance of $\sqrt{2t}H_N(\sigma)$ and $\sum_{i=1}^N \Ll(\sqrt{2h} z_i\Rr)\cdot \sigma_{\bullet i}$, respectively. These two terms help us to \rv{form} the Hamilton--Jacobi (HJ) equation, to be introduced, when the \textit{self-overlap} $\sigma\sigma^\intercal/N$ is not constant.
We view them as correction terms similar to that appearing in an exponential martingale.

Instead of $\mathsf{F}_N(\beta)$, we consider the free energy
\begin{align}\label{e.F_N(t,h)}
    F_N(t,h) = -\frac{1}{N}\E\log \int \exp\Ll(H^{t,h}_N(\sigma)\Rr) \otimes_{i=1}^NP(\d \sigma_{\bullet i})
\end{align}
where $\E$ averages over the randomness in $H_N(\sigma)$ and $(z_i)$.
Notice that if $\sigma_{\bullet i}\sigma_{\bullet i}^\intercal=a$ for some $a\in\S^\D_+$ almost surely under $P$, then we have $F_N(\beta^2/2, 0) = \mathsf{F}_N(\beta) + \beta^2\xi(a)/2$. In general, we expect to relate $F_N(\beta^2/2, 0)$ to $\mathsf{F}_N(\beta)$ in a simple relation if $F_N$ converges and satisfies some regularity. 
In particular, this can be done when $\xi$ is convex.

We also consider the associated Gibbs measure
\begin{align*}
    \la\cdot\ra_{N,t,h} \propto \exp\Ll(H^{t,h}_N(\sigma)\Rr) \otimes_{i=1}^NP(\d \sigma_{\bullet i})
\end{align*}
where the left- and right-hand sides differ by a normalizing factor making $\la\cdot\ra_{N,t,h}$ a probability measure. Notice that it is a random measure since it depends on the randomness of $H^{t,h}_N(\sigma)$. We denote the tensorized version of $\la\cdot\ra_{N,t,h}$ still by itself. In this notation, we can sample two independent random variables $\sigma$ and $\sigma'$ from $\la\cdot\ra_{N,t,h}$.
A simple computation using Gaussian integration by parts yields
\begin{align}\label{e.dtF_N=...dhF_N=...}
    \partial_t F_N(t,h) = \E \la \xi \Ll(\sigma\sigma'^\intercal/N\Rr)\ra_{N,t,h},\qquad \partial_h F_N(t,h) = \E \la \sigma\sigma'^\intercal/N\ra_{N,t,h}.
\end{align}
Recall that $P$ is assumed to have a bounded support. So, $\Ll|\sigma\sigma'^\intercal/N\Rr|$ is always bounded by some constant $C_0$. Since $\xi$ is smooth, we can set $C_1$ to be the Lipschitz coefficient of $\xi$ restricted to the centered ball with radius $C_0$. Then, we can obtain from the above display that
\begin{align}\label{e.|dtF_N-xi(dhF_N)|<}
    \Ll|\partial_t F_N - \xi\Ll(\partial_h F_N\Rr)\Rr| \leq C_1\E\la  \Ll|\sigma\sigma'^\intercal/N - \E \la \sigma\sigma'^\intercal/N \ra\Rr|\ra
\end{align}
where we have omitted $(t,h)$ and subscripts $N,t,h$. In \rv{spin glass theory}, for small $t$, the right-hand side is expected to vanish as $N\to\infty$, namely, the \textit{overlap} $\sigma\sigma'^\intercal/N$ concentrates at its mean.
Therefore, at least for small $t$, we expect $F_N$ as a function of $(t,h)$ converges to the solution $f$ of the HJ equation
\begin{align*}
    \partial_t f - \xi (\partial_h f)=0,\qquad \text{on $[0,\infty)\times \S^\D_+$.}
\end{align*}
In our setting, since each spin is independently sampled from $P$, we have $F_N(0,\cdot) = F_1(0,\cdot)$. Hence, the initial condition for the equation is simply $f(0,\cdot) = F_1(0,\cdot)$.

However, for large $t$, this picture is not valid because $\sigma\sigma'^\intercal/N$ no longer concentrates and the system is in the so-called \textit{replica symmetry breaking} regime. In this case, the external field in~\eqref{e.H^t,h_N(sigma)=} fails to capture the asymptotics of $\sigma\sigma'^\intercal/N$ and we need to replace it with a more complicated external field parameterized by an integrable path from the following collection
\begin{align}\label{e.mclQ=}
    \mcl Q =\Ll\{q:[0,1)\to \S^\D_+ \ \big| \ q(s')-q(s)\in\S^\D_+,\, \forall s' \geq s;\ \text{$q$ is right-continuous}\Rr\} .
\end{align}
of increasing paths in $\S^\D_+$.
We keep the corresponding correction terms as mentioned below~\eqref{e.H^t,h_N(sigma)=}. This procedure gives a version of free energy that we denote as $F_N(t,q)$. 
We view $F_N$ as a function of $(t,q)$.
\rv{For the constant path $q = h\in\S^\D_+$, the expression reduces to $F_N(t,h)$ as defined in~\eqref{e.F_N(t,h)}.}
Similar to~\eqref{e.dtF_N=...dhF_N=...}, we can compute the derivatives of $F_N(t,q)$ in $t$ and in $q$. We only consider those $q$ that are square-integrable here, namely, paths belonging to
\begin{align}\label{e.mclQ_2=mclQcapL^2}
    \mcl Q_2  =\mcl Q \cap L^2\Ll([0,1),\S^\D\Rr).
\end{align}
Throughout, we denote the functional differentiation in $q$ as $\bnabla =\bnabla_q$ (whereas we denote the Euclidean gradient as $\nabla$). This can be defined with respect to the ambient $L^2$ space (see Definition~\ref{d.Frechet}) so that $\bnabla F_N(t,q)$ is viewed as a square-integrable path $\bnabla F_N(t,q,\cdot)$ from $[0,1)$ to $\S^\D$. 
For any measurable path $\kappa:[0,1)\to\R^{\D\times\D}$ that makes the following finite, we use the notation:
\begin{align}\label{e.bxi}
    \bxi (\kappa) = \int_0^1 \xi(\kappa(s))\d s.
\end{align}
A similar relation as in~\eqref{e.|dtF_N-xi(dhF_N)|<} holds, leading us to the following conjecture~\cite{mourrat2022parisi,mourrat2021nonconvex} that, in the case where $\xi$ is possibly non-convex, the free energy $F_N$ converges pointwise to the solution of the HJ equation
\begin{align}\label{e.hj}
    \partial_t f - \bxi(\bnabla f) = 0, \qquad \text{on $[0,\infty)\times \mcl Q_2$}
\end{align}
with initial condition $f(0,\cdot) = F_1(0,\cdot)$. Here, as explained previously, the nonlinearity at a point $(t,q)$ is understood as 
\begin{align*}
    \bxi(\bnabla f(t,q)) = \int_0^1 \xi\Ll(\bnabla f(t,q,u)\Rr)\d u.
\end{align*}
where the derivative $\bnabla f(t,q) = \bnabla f(t,q,\cdot)$ is defined as before for $\bnabla F_N(t,q)$.

When $\xi$ is convex, the limit of $F_N$ is still given by a Parisi-type variational formula similar to that in~\eqref{e.parisi_sF}, which is equivalent to the Hopf--Lax formula for~\eqref{e.hj} \cite{mourrat2022parisi,mourrat2020extending} (for a more general version, see~\cite[Theorem~1.1]{chen2023free}). It has been shown in~\cite{chen2022hamilton} that the Hopf--Lax formula is a representation for the unique Lipschitz viscosity solution of~\eqref{e.hj} when $\xi$ is convex. The well-posedness of this notion of solution in the general case (possibly non-convex $\xi$) relevant to the spin glass setting has also been proved in~\cite{chen2022hamilton}. In the non-convex case, Mourrat \cite{mourrat2021nonconvex,mourrat2023free} showed that the viscosity solution is always a lower bound:
\begin{align}\label{e.free_energy_lower_bd}
    \liminf_{N\to\infty} F_N(t,q)\geq f(t,q),\quad\forall (t,q)\in [0,\infty)\times \mcl Q_2.
\end{align}
This can be seen as a substitute for Guerra's lower bound, which does not work when $\xi$ is not convex. The matching upper bound is still missing. 

In the following, we describe the recent results for the non-convex model.
Recently, \cite{chen2023free} (and~\cite{chen2024free} for multi-species spin glasses) showed that any subsequential limit of $F_N$ is differentiable and satisfies~\eqref{e.hj} on a dense subset of $[0,\infty)\times \mcl Q_2$. However, this is insufficient to identify the limit of $F_N$, since \rv{uniqueness} is not guaranteed for solutions defined in a sense weaker than that of viscosity solutions. Also in~\cite{chen2023free}, a stronger property than the viscosity solution is shown: \rv{if the limit of $F_N$ exists, it} must be prescribed by a characteristic line. Let us explain this in more detail. 
In the discussion below, we write
\begin{align}\label{e.psi=F_1(0,cdot)}
    \psi= F_1(0,\cdot)
\end{align}
and \textit{assume that $F_N$ converges pointwise to some function $\bar f$ on $[0,\infty)\times \mcl Q_2$}. 
Under this assumption, \cite{chen2023free} showed that, for every $(t,q)\in[0,\infty)\times \mcl Q_2$, there is a point $q'\in\mcl Q_2$ such that
\begin{align}\label{e.q'}
    q = q' - t\nabla\xi (\bnabla \psi(q')).
\end{align}
Here, $\nabla\xi$ is the gradient of $\xi$ in the Euclidean space $\R^{\D\times\D}$. The derivative of the functional $p\mapsto \bxi(p) =\int_0^1\xi(p(s))\d s$ at any bounded $p:[0,1)\to\S^\D$ is given by $\bnabla \bxi(p) = \nabla\xi\circ p=\nabla\xi(p)$.\footnote{This relation reads that the functional derivative of $\bxi$ at $p$, viewed as an element in $L^2$, is equal to the path given $\nabla\xi\circ p : [0,1)\to \R^{\D\times\D}$, which we also write as $\nabla\xi(p)$ (the path $s\mapsto \nabla\xi(p(s))$).} Hence, one can interpret $s\mapsto q' - s \nabla\xi(\bnabla \psi(q'))$ as the characteristic line emitting from $q'$ associated with the equation~\eqref{e.hj}. We can restate~\eqref{e.q'} as that the characteristic line from $q'$ reaches $q$ at time $t$. Then, \cite{chen2023free} also showed 
\begin{align}\label{e.char_representation}
    \bar f(t,q) = \psi(q') + \la q-q', \bnabla\psi(q')\ra_{L^2} + t \bxi(\bnabla\psi(q'))
\end{align}
where the right-hand side is exactly the value associated with the characteristic. 
Moreover, for every $(t,q)$ in a dense subset of $[0,\infty)\times \mcl Q_2$, there is $q'$ satisfying~\eqref{e.q'}, \eqref{e.char_representation},
\begin{align}\label{e.dtf=xi(dqpsi),dqf=dapsi}
    \partial_t \bar f(t,q) = \bxi\Ll(\bnabla \psi(q')\Rr),\qquad\text{and}\qquad \bnabla \bar f(t,q)=\bnabla\psi(q').
\end{align}
In particular, the limit $\bar f$ is Gateaux differentiable at such $(t,q)$.

Recall that so far we are not able to identify the limit of free energy with the unique viscosity solution. On the one hand, any subsequential limit must satisfy the equation on a dense set, which is weaker than viscosity solutions; while the relation in~\eqref{e.char_representation} is a stronger property than that satisfied by the viscosity solution. 
(These could suggest that the notion of viscosity solutions might not be the most suitable one here.)
Hence, it is important to understand more about the viscosity solutions in this context, which motivates the endeavor in this work.

\subsection{Envelope representation of the viscosity solution}\label{s.envelop}

We want to study the representation of viscosity solutions in a form similar to the right-hand side in~\eqref{e.char_representation}. 
We emphasize that the analysis of the equation~\eqref{e.hj} in this work is purely based on the PDE toolbox. The spin glass background as described above only serves as the motivation and the guide for the acceptable assumptions imposed on~\eqref{e.hj}, for instance, the assumption on the initial condition (see~\ref{i.A}) and that on the nonlinearity (see~\ref{i.B}) in Section~\ref{s.r1}.

Our inspiration is drawn from the work by Evans~\cite{evans2014envelopes}. Let us explain the idea.
Notice that if we fix any $q'$ and let $(t,q)$ vary, then the expression on the right-hand side of~\eqref{e.char_representation} gives an affine solution
\begin{align}\label{e.envelope}
    (t,q) \mapsto \psi(q') + \la q-q', \bnabla\psi(q')\ra_{L^2} + t \bxi(\bnabla\psi(q'))
\end{align}
of the equation~\eqref{e.hj}. 
This is related to the two fundamental variational formulas, Hopf--Lax and Hopf. Replacing $\bnabla \psi(q')$ in~\eqref{e.envelope} by a free variable $p$, we consider the functional 
\begin{align*}
    \mcl J_{t,q}(q',p)= \psi(q') + \la q-q', p\ra_{L^2} + t \bxi(p)
\end{align*}
which is exactly the one considered in~\cite{chen2023free}. Then,
\begin{align*}
    (t,q)\quad\mapsto\quad \sup_{q'} \inf_p \mcl J_{t,q}(q',p)
\end{align*}
is the Hopf--Lax formula, which is the viscosity solution of~\eqref{e.hj} if $\xi$ is convex \cite{chen2022hamilton}. On the other hand,  
\begin{align*}
    (t,q)\quad\mapsto\quad  \sup_p \inf_{q'}\mcl J_{t,q}(q',p)
\end{align*}
is the Hopf formula, which is a viscosity solution if $\psi$ is convex \cite{chen2022hamilton}. (In the spin glass setting, $\psi$ is not convex in general.)
Both formulas can be seen as envelope constructions.
The extremality condition for $\mcl J_{t,q}(q',p)$ in $q'$ gives $p = \bnabla\psi(q')$, which recovers~\eqref{e.envelope} from $\mcl J_{t,q}(q',p)$.
Similar observations hold in finite dimensions, which is the setting for~\cite{evans2014envelopes}.

When neither the nonlinearity (here, $\xi$) nor the initial condition (here, $\psi$) is concave or convex, simple variational formulas are not available. In these cases, Evans proposed in~\cite{evans2014envelopes} a new representation in terms of a generalized envelope using affine solutions as in~\eqref{e.envelope}. Our main result is a generalization of Evans' result in the current setting, which is stated below in an informal way. 

\begin{result}[Envelope representation]\label{r.1}
Let $f$ be the unique Lipschitz viscosity solution of~\eqref{e.hj} with initial condition $f(0,\cdot)=\psi$ relevant in the spin glass setting.
For every $(t,q) \in \R\times \mcl Q_2$, there is a probability measure $\gamma_{t,{q}}$ on $\C_2$ such that
\begin{align}\label{e.r.1}
    f(t,{q}) = \int_{\C_2} \psi({q'})+\la {q}-{q'},\bnabla\psi({q'})\ra_\cH + t\bxi\Ll(\bnabla\psi({q'})\Rr)\ \d \gamma_{t,{q}}({q'}).
\end{align}
\end{result}

The rigorous version of it is Theorem~\ref{t}.
Notice that ~\eqref{e.r.1} is weaker than~\eqref{e.char_representation}, which is obtained through spin-glass techniques rather than purely PDE arguments used for Result~\ref{r.1}.

We mention a few technical difficulties.
First, $\mcl Q_2$ is a closed convex cone with an empty interior in $L^2$.
Hence, $\mcl Q_2$ is a domain with a boundary and its boundary is itself. Usually, PDEs on a domain with a boundary need to impose boundary conditions. Here, due to some monotonicity in the spin glass setting, there is no need for the boundary condition, which simplifies the analysis, especially in the infinite dimension.
Since we are adapting the results from~\cite{evans2014envelopes} which considered adjoint equations posed on the whole space, the boundary here makes the analysis of the adjoint equation harder. We need to do a $C^1$-extension of the initial condition and consider the corresponding equation on the whole space. The type of extension needed here seems to be new (see the discussion below Lemma~\ref{l.ext}).
Another difficulty is to ensure that $\gamma_{t,q}$ is supported on $\mcl Q_2$. For this, we need to obtain moment estimates and show the finite-dimensional approximations of $\gamma_{t,q}$ are asymptotically supported on the cone. The latter is needed also because we cannot control the $C^1$-extension outside the cone when the dimension increases.

Evans in~\cite{evans2014envelopes} also proved the representation of the derivatives at differentiable points. In our setting these results would correspond to that if $f$ is differentiable at a point $(t,q)$, then
\begin{align}\label{e.derivatives_rep}
    \partial_t f(t,{q})  = \int_{\C_2} \bxi\Ll(\bnabla\psi({q'})\Rr)\ \d \gamma_{t,{q}}({q'}),\qquad
    \bnabla f(t,{q})  = \int_{\C_2} \bnabla\psi({q'})\ \d \gamma_{t,{q}}({q'}).
\end{align}
In finite dimensions, these relations hold at almost every point by Rademacher's theorem.
To prove~\eqref{e.derivatives_rep} in infinite dimensions, there are some difficulties. First, we need a good notion of differentiability. It is tempting to work with \rv{Fr\'echet} differentiability since it gives control uniformly in all directions. But, then we do not know if $f$ is Fr\'echet differentiable ``almost everywhere'' given that $f$ is Lipschitz. There are results generalizing Rademacher's theorem to infinite dimensions, for instance, \cite{lindenstrauss2003frechet,asplund1968frechet,johnson2002almost,preiss1990differentiability}. But they concern functions defined on an open set, which is not applicable here because $\mcl Q_2$ has an empty interior in $L^2$. 
Another option is to work with the weaker notion of \rv{Gateaux} differentiability. Indeed, it is proven in~\cite[Proposition~2.7]{chen2023free} that any Lipschitz function on $\mcl Q_2$ or $[0,\infty)\times\mcl Q_2$ is Gateaux differentiable ``almost everywhere''. However, \rv{Gateaux} differentiability offers much less control over the derivative.
If we do not worry about the density or even existence of differentiable points and we restrict our attention to only showing~\eqref{e.derivatives_rep} at a Fr\'echet differentiable point $(t,q)$, then there would still be technical issues. Indeed, we want to approximate \rv{Fr\'echet} derivatives of $f$ at $(t,q)$ by derivatives of finite-dimensional equations at points $(t_j,x_j)$ converging to $(t,q)$ as $j$ tends to infinity. To ensure that $\gamma_{t,q}$ is supported on the cone $\mcl Q_2$ and to make the approximation work, we need to guarantee that $(t_j,x_j)$ lies in the interior of a finite-dimensional cone (projection of $\mcl Q_2$). This is again nontrivial since $\mcl Q_2$ has an empty interior and it is easy for $(t_j,x_j)$ to be away from the cone.

Here, we resort to a weaker statement. We give a representation of the form in~\eqref{e.derivatives_rep} but only for limits of derivatives of finite-dimensional approximations $(f_j)_{j\in\N}$ (see~\eqref{e.f_j=} and Lemma~\ref{l.f_j_approx}), which are solutions of finite-dimensional projections of the equation~\eqref{e.hj}. They are used in~\cite{mourrat2021nonconvex,mourrat2023free} to prove the lower bound on the free energy as in~\eqref{e.free_energy_lower_bd} and they are naturally associated with finite-step replica symmetry breakings in the spin glass phase.
Below is our second main result.

\begin{result}[Envelope representation for limits of derivatives] \label{r.2}
We consider projections of the HJ equation~\eqref{e.hj} (with $f(0,\cdot)=\psi$) into finite-dimensional HJ equations indexed by $j\in\N$. Let $(f_j)_{j\in\N}$ be the solution of the $j$-th equation and let $(t_j,x_j)$ be a differentiable point of $f_j$. Then, the sequence 
\begin{align}\label{e.(Df_j...)_intro}
    \Big(\,\partial_t f_j(t_j,x_j),\ \nabla f_j(t_j,x_j)\,\Big)_{j\in\N},
\end{align}
is precompact.
Assume that $(t_j, x_j)_{j\in\N}$ converges to some $(t, q)\in[0,\infty)\times \mcl Q_2$. Then, for any subsequential limit $(a,p)\in\R\times L^2$ of~\eqref{e.(Df_j...)_intro}, there is a Borel probability measure $\gamma_{t,q}$ on $\mcl Q_2$ such that
\begin{gather}\label{e.r.2(1)}
\begin{split}
    f(t,{q}) - \la {q}, p\ra_\cH -ta = \int_{\C_2} \psi({q'}) -\la{q'}, \bnabla\psi({q'})\ra_{\cH}  \d \gamma_{t,{q}}({q'}),
    \\
    a = \int_{\C_2} \bxi\left(\bnabla\psi({q'})\right)\d \gamma_{t,{q}}({q'}),\qquad p = \int_{\C_2} \bnabla\psi({q'}) \d \gamma_{t,{q}}({q'}).
\end{split}
\end{gather}
In particular, since we always have $a=\bxi(p)$, the measure $\gamma_{t,q}$ also satisfies
\begin{align}\label{e.r.2(2)}
    \int_{\C_2} \bxi\left(\bnabla\psi({q'})\right)\d \gamma_{t,{q}}({q'}) = \bxi\left(\int_{\C_2} \bnabla\psi({q'})\d \gamma_{t,{q}}({q'})\right).
\end{align}
\end{result}

Here, the convergence of $x_j$ to $q$ and that of $\nabla f_j(t_j,x_j)$ to $p$ are understood in $L^2$ when we lift them (see~\eqref{e.pj,lj=}) from finite-dimensions to $L^2$-valued objects.
Again, the representations in~\eqref{e.r.2(1)} are similar to but weaker than those in~\eqref{e.char_representation} and~\eqref{e.dtf=xi(dqpsi),dqf=dapsi}.
Result~\ref{r.2} is a generalization of Result~\ref{r.1}. Indeed, substituting the expressions for $a$ and $p$ from \rv{the second line of}~\eqref{e.r.2(1)} into the first line recovers~\eqref{e.r.1}; and, for each $(t,q)\in[0,\infty)\times \mcl Q_2$, we can always find differentiable points $(t_j,x_j)$ converging to $(t,q)$. Notice that the commutation of $\bxi$ and $\int \d \gamma_{t,q}$ in~\eqref{e.r.2(2)} is nontrivial and gives interesting restrictions on the class of measures which $\gamma_{t,q}$ belongs to.

Lastly, it is interesting to understand the uniqueness of $\gamma_{t,q}$ and the condition for $\gamma_{t,q}$ to be a Dirac measure in the general non-convex setting. These questions are not trivial even in finite dimensions. Hence, we postpone them to future investigations.

\subsection{Other related works}
\label{s.rel_results}
Besides the relevant works already mentioned, we describe some other related works.

The original Parisi formula has been extended to various settings: the SK model with soft spins \cite{pan05}, the SK model with spherical spins \cite{tal.sph}, the mixed $p$-spin model \cite{panchenko2014parisi,pan}, the multi-species model (Ising spins and spherical spins) \cite{barcon,pan.multi,bates2022free}, the Potts spin glass~\cite{pan.potts,bates2023parisi,chen2024parisi}, and general vector-spin models \cite{pan.vec}. 
A crucial ingredient to most of these is Panchenko's ultrametricity result~\cite{pan.aom}.
Recently, there are works showing that the Parisi formula can be inverted into an infimum instead of supremum~\cite{mourrat2023inverting,issa2024hopf} (in the current notation here; in other literature, it is often the other way around).
The HJ approach first appeared in Guerra's work \cite{guerra2001sum} in the replica-symmetric regime, which was later extended to various settings~\cite{barra1,barra2,abarra,barramulti,barra2014quantum} (also see~\cite{chen2022pde} relating the cavity computation to the HJ equation).

Tools from \rv{spin glass theory} have also been successfully applied to statistical inference models. In particular, in the optimal Bayesian inference, due to the presence of the so-called Nishimori identity, the system is always replica-symmetric. As a result, the variational formula for the limit free energy is finite-dimensional~\cite{barbier2016, barbier2019adaptive, barbier2017layered, chen2022statistical, HB1, HBJ, chen2023freemulti-layer, kadmon2018statistical,   lelarge2019fundamental, lesieur2017statistical, luneau2020high, mayya2019mutualIEEE, miolane2017fundamental, mourrat2020hamilton, mourrat2021hamilton, reeves2020information, reeves2019geometryIEEE}. Similar to the spin glass setting, the relevant HJ equation is defined on a convex cone but in finite dimensions, which is studied in~\cite{chen2022hamiltonCones,chen2020fenchel}. Related works on the well-posedness of these equations also include~\cite{issa2024weaksol,issa2024weak}.
In the non-optimal inference model, the system is no longer replica-symmetric and thus better resembles spin glasses~\cite{camilli2022inference,barbier2021performance,barbier2022price,pourkamali2022mismatched,guionnet2023estimating}.
Infinite-dimensional equations similar to~\eqref{e.hj} also arise in the setting of inference on sparse graphs~\cite{dominguez2022infinite,dominguez2022mutual,dominguez2024critical}.
The HJ technique is also useful in treating spin glass models with additional conventional order parameters such as the self-overlap or the mean magnetization~\cite{chen2024self,chen2024conventional}.
We refer to the book~\cite{dominguez2023statistical} for an overview of the HJ approach to statistical mechanics.

We refer to the classic work~\cite{crandall1992user} for HJ equations in finite dimensions. We focus more on equations in infinite dimensions. First-order HJ equations in Banach spaces with Radon-Nikodym property and, in particular, Hilbert spaces have been studied first in~\cite{crandall1985hamiltonI,crandall1986hamiltonII}. Other works on HJ equations in Hilbert spaces include \cite{crandall1986hamiltonIII,crandall1990viscosityIV,crandall1991viscosityV,crandall1994hamiltonVI,tataru1992viscosity,feng2006large,feng2009comparison}. Works on equations in Wasserstein spaces include \cite{fenkur,cardaliaguet2008deterministic,cardaliaguet2010notes,cardaliaguet2012differential,gangbo2008hamilton,amb14,gangbo2014optimal,gangbo2019differentiability,feng2012hamilton}.

The technical tool used by Evans in~\cite{evans} is the adjoint equation of the linearization of the original HJ equation, which has previously appeared in~\cite{evans2010adjoint,tran2011adjoint,cagnetti2011aubry}. This method will also be used here to analyze finite-dimensional approximations of the infinite-dimensional equation.

\subsection{Outline}\label{s.outline}

In Section~\ref{s.def_result}, we give detailed definitions and state the rigorous versions of Results~\ref{r.1} and~\ref{r.2} in Theorems~\ref{t} and~\ref{t2}, respectively. In particular, we define the notion of viscosity solutions suitable in both finite and infinite dimensions and we introduce the finite-dimensional approximations appearing in Result~\ref{r.2}. 

In Section~\ref{s.fin_d_PDE}, we study the properties of the solutions of finite-dimensional equations, including the aforementioned approximations and other related equations. Here, we use the ideas from~\cite{evans2014envelopes} applied to the finite-dimensional approximations. Instead of the final form of the result in~\cite{evans2014envelopes}, we use the form still with error terms since our procedure of taking limits is more complicated than and different from that in~\cite{evans2014envelopes}. Apart from this, we derive moment estimates on the measures whose limit is $\gamma_{t,q}$ and we show that these measures are asymptotically supported on the cone. 
We also include other analytic results such as the tightness of these measures and a $C^1$-extension lemma for functions defined on a cone.

Lastly, in Section~\ref{s.proofs}, we combine all the results to first prove Result~\ref{r.2} (Theorem~\ref{t2}) and use it to derive Result~\ref{r.1} (Theorem~\ref{t}).

\section{Definitions and main results}\label{s.def_result}

Recall $\mcl Q$ in~\eqref{e.mclQ=} and that we endow $\S^\D$ with the entry-wise inner product (see the first paragraph in Section~\ref{s.HJ_eqn_approach}).
Consistently as in~\eqref{e.mclQ_2=mclQcapL^2}, for $r\in[1,\infty]$, we write
\begin{align}\label{e.L^p}
    L^r = L^r([0,1),\S^\D),\qquad \mcl Q_r = \mcl Q \cap L^r.
\end{align}
We denote the norm in $L^r$ as $|\cdot|_{L^r}$ and the inner product in $L^2$ as $\la\cdot,\cdot\ra_{L^2}$.

\subsection{Differentiability and viscosity solutions}

\begin{definition}[Fr\'echet differentiability]\label{d.Frechet}

For a subset $G$ of a Hilbert space $E$ and ${q}\in G$,
a function $g:G\to\R$ is \textbf{Fr\'echet differentiable} at ${q}$ if there is a unique element $p\in E$ such that
\begin{align}\label{e.frech}
    \lim_{r\to 0}\sup_{\substack{{q'}\in G\setminus\{{q}\}\\ |{q'}-{q}|_E\leq r}}\frac{\Ll|g({q'}) - g({q}) - \la p , {q'}-{q}\ra_E\Rr|}{|{q'}-{q}|_E} =0.
\end{align}
In this case, we denote by $p=\bnabla g({q})$ the derivative of $g$ at $q$.
\end{definition}

In the above, $|\cdot|_E$ is the norm of $E$ and $\la\cdot,\cdot\ra_E$ is the inner product.
We can rewrite~\eqref{e.frech} as
\begin{align*}
    g({q'}) - g({q}) = \la {q'}-{q}, \bnabla g({q}) \ra_E + o\Ll(\Ll|{q'}-{q}\Rr|_E\Rr)
\end{align*}
for ${q'} \in G$ tending to ${q}$.
In this work, aside from obvious applications in Euclidean spaces, we take $E$ to be either $L^2$ or $\R\times L^2$ and $G$ to be either $\mcl Q_2$ or $[0,\infty)\times \mcl Q_2$, respectively. 
For these two choices, since the linear span of $G$ is dense in $E$, we conclude that if there is $p$ for~\eqref{e.frech} to hold, then $p$ must be unique.

We define the notion of smoothness.

\begin{definition}[Smoothness]\label{d.smooth}
For a subset $G$ of a Hilbert space $E$, a function 
$g:G \to\R$ is said to be \textbf{smooth} if $g$ is Fr\'echet differentiable everywhere, the function ${q}\mapsto \bnabla g({q})$ is continuous, and it holds at every ${q}$ that
    \begin{align*}
    \limsup_{r\to 0}\sup_{\substack{{q'}\in G\setminus\{{q}\}\\ |{q'}-{q}|_E\leq r}}\frac{\Ll|g({q'}) - g({q}) - \la {q'}-{q},\, \nabla g({q}) \ra_{E}\Rr|}{|{q'}-{q}|_{E}^2} <\infty.
\end{align*}   
\end{definition}

The displayed condition can be rewritten as 
\begin{align*}
    g({q'}) - g({q}) - \la {q'}-{q},\, \bnabla g({q}) \ra_{E} + O\Ll(|{q'}-{q}|_{E}^2\Rr)
\end{align*}
as ${q'}$ approaches ${q}$. 

We choose to denote the gradient operator as $\bnabla$ when $E$ is infinite-dimensional to emphasize the infinite dimensionality and that it is understood in the strong sense of Fr\'echet. In finite dimensions, we simply use $\nabla$ to denote the usual Euclidean gradient operator.

\begin{definition}[Viscosity solution]\label{d.vis_sol}
For a subset $ G$ of a Hilbert space $E$, and a function $\sF:E\to\R$, a \textbf{viscosity subsolution} (resp.\ \textbf{supersolution}) of
\begin{align}\label{e.dtg-F(dg)=0}
    \partial_t f - \sF(\nabla f) =0,\quad \text{on }[0,\infty)\times G,
\end{align}
is a continuous function $f:[0,\infty)\times G\to\R$ such that, for every $(t,{q})\in (0,\infty)\times G$ and every smooth $\phi:(0,\infty)\times G\to\R$ satisfying that $f-\phi$ has a local maximum (resp.\ minimum) at $(t,{q})$, it holds that $\Ll(\partial_t \phi - \sF(\nabla \phi)\Rr)(t,{q})\leq 0$ (resp.\ $\geq 0$). If $g$ is both a subsolution and supersolution, then $g$ is said to be a \textbf{viscosity solution} of~\eqref{e.dtg-F(dg)=0}.

We denote by $\HJ(G,\sF;f_0)$ the equation~\eqref{e.dtg-F(dg)=0} together with an initial condition $f(0,\cdot)=f_0$ on $G$ (for some $f_0$ defined on a domain possibly larger than $G$), where the ambient Hilbert space should be clear from the context.
\end{definition}

We also need the notion of dual cones in the general setting and the associated monotonicity.

\begin{definition}[Dual cone]\label{d.dual_cone}
For a nonempty closed convex cone $K$ in a Hilbert space $E$, its \textbf{dual cone} is $K^*=\{x\in E:\la x,y\ra_E\geq 0,\ \forall y\in K\}$. Any function $g:G\to \R$ defined on a subset $G$ of $K$ is said to be \textbf{$K^*$-increasing} provided $g(q')\geq g(q)$ whenever $q'-q \in K^*$.
\end{definition}

It is classical that $\S^\D_+$ is self-dual in $\S^\D$ in the sense that $(\S^D_+)^* =\S^\D_+$, which can be verified through diagonalization. We will recall an explicit expression of the dual cone $\mcl Q_2^*$ of $\mcl Q_2$ in Lemma~\ref{l.dual_cone}.

\subsection{Precise statement of Result~\ref{r.1}}\label{s.r1}

We start by rigorously describing the HJ equation relevant to the spin glass setting. 

First, we describe the initial condition.
Recall that in the introduction (see~\eqref{e.psi=F_1(0,cdot)}), we consider $\psi$ equal to $F_1(0,\cdot)$ which is the relevant choice in the spin glass setting. Since we are directly working on the PDE side, we consider a general initial condition $\psi$ that satisfies some properties of $F_1(0,\cdot)$. Henceforth, we assume the following:
\begin{enumerate}[label={\rm (A)}]
    \item \label{i.A} The function $\psi:\mcl Q_2\to\R$ is smooth (as in Definition~\ref{d.smooth}) and its derivative satisfies $\bnabla \psi(q) \in \mcl Q_\infty$ at every $q\in\mcl Q_2$, $\sup_{q\in\mcl Q_2}|\bnabla \psi(q)|_{L^\infty}\leq 1$, and the function $q\mapsto \bnabla \psi(q)$ is Lipschitz on $\mcl Q_2$ (in the $L^2$-metric).
\end{enumerate}
By~\cite[Corollary~5.2]{chen2023free}, these properties of $\psi$ indeed hold in the spin glass case. The number $1$ in $\sup_{q\in\mcl Q_2}|\bnabla \psi(q)|_{L^\infty}\leq 1$ comes from the usual assumption in vector spin glass that each single spin lies in the unit ball of $\R^\D$. In general, this assumption is not restrictive since re-scaling is always possible.

Then, we describe the nonlinearity. Recall that in the introduction, $\xi$ is a function characterizing the Gaussian disorder of the spin glass Hamiltonian as in~\eqref{e.H_N(sigma)}.
\rv{We assume the following}:
\begin{enumerate}[label={\rm (B)}]
    \item \label{i.B} The function $\xi:\R^{\D\times \D}\to\R$ is smooth (thus locally Lipschitz). Its derivative satisfies $\nabla \xi(a) \in\S^\D_+$ at every $a\in\S^\D_+$ and $\nabla \xi(a') -\nabla\xi (a)\in\S^\D_+$ whenever $a'-a \in \S^\D_+$ and $a\in\S^\D_+$.
\end{enumerate}
We can interpret the condition on $\nabla\xi$ as that both $\xi$ and $\nabla\xi$ are increasing on $\S^\D_+$. In the spin glass setting, if we assume that $\xi$ satisfies~\eqref{e.H_N(sigma)} for some Gaussian field $(H_N(\sigma))_{\sigma\in\R^{\D\times N}}$ and that $\xi$ admits an absolutely convergent power series expansion, then~\cite[Proposition~6.6]{mourrat2023free} states that $\xi$ must be of the form $\xi(a) = \sum_{p=0}^\infty \mathsf{C}^{(p)}\cdot a^{\otimes p}$ for every $a\in \R^{\D\times\D}$, where $\mathsf{C}^{(p)}$ is a matrix in $\S^{\D^p}_+$, $a^{\otimes p}$ is the $p$-th order tensor product of $a$ viewed as a matrix in $\R^{\D^p\times \D^p}$, and $\cdot$ is the entry-wise inner product. In this case, it is straightforward to verify~\ref{i.B} using the power series expansion and the fact that the coefficients are positive semi-definite matrices. Therefore, \ref{i.B} is a natural condition to put on $\xi$.

From $\xi$ to the nonlinearity in the infinite-dimensional equation, we need some modification.
Given any $\xi$ satisfying~\ref{i.B}, \cite[Lemma~4.3]{chen2022hamilton} gives the existence of a \textbf{regularization} $\bar \xi:\S^\D_+\to\R$ of $\xi$ which is defined to satisfy
\begin{enumerate}[start=1,label={\rm (c\arabic*)}]
    \item $\bar \xi$ coincides with $\xi$ on $\{a \in \S^\D_+: |a_{ij}|\leq 1,\,\forall i,j\}$; \label{i.c1}
    \item $\bar \xi$ is Lipschitz, bounded from below, $\S^\D_+$-increasing, and satisfies that for every $b\in\S^\D_+$, we have $\bar\xi (a'+b) - \bar\xi(a') \geq \bar\xi (a+b) - \bar\xi(a)$ whenever $a'-a\in\S^\D_+$.
    \item $\bar \xi$ is convex, if, in addition, $\xi$ is convex.
\end{enumerate}
Associated with a regularization $\bar \xi$, we define $\H:\cH\to \R$ by
\begin{align}\label{e.H=}
    \H(\kappa) = \inf \Ll\{\int_0^1 \bar \xi ({q}(s))\d s:\:{q} \in \C_2\cap \Ll(\kappa + \C_2^*\Rr)\Rr\},\quad\forall \kappa \in \cH.
\end{align}
By \cite[Lemma~4.5]{chen2022hamilton}, the following holds:
\begin{enumerate}[start=1,label={\rm (d\arabic*)}]
    \item $\H({q}) = \int_0^1 \bar \xi ({q}(s))\d s$ for every ${q}\in\C_2$; \label{i.d1}
    \item \label{i.d2} $\H$ is Lipschitz (in the $L^2$-metric), bounded from below, and $\mcl Q_2^*$-increasing (as in Definition~\ref{d.dual_cone}).
    \item $\H$ is convex if $\xi$ is convex.
\end{enumerate}

We are ready to state the precise version of Result~\ref{r.1}.

\begin{theorem}[Envelope representation]\label{t}
Let $\psi$ satisfy~\ref{i.A}. Let $\xi$ satisfy~\ref{i.B}, let $\bar \xi$ be a regularization of $\xi$, and let $\H$ be given as in~\eqref{e.H=}.
Let $f$ be the unique Lipschitz viscosity solution of $\HJ(\C_2,\H;\psi)$ (as in Definition~\ref{d.vis_sol}).
Then, for every $(t,{q})\in[0,\infty)\times \mcl Q_2 $ there is a Borel probability measure $\gamma_{t,{q}}$ on $\C_2$ such that
\begin{align}\label{e.t}
    f(t,{q}) = \int_{\C_2} \psi({q'})+\la {q}-{q'},\bnabla\psi({q'})\ra_\cH + t\bxi(\bnabla\psi({q'}))\d \gamma_{t,{q}}({q'}).
\end{align}
\end{theorem}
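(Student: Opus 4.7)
The plan is to derive Theorem~\ref{t} as a corollary of the stronger Theorem~\ref{t2} (Result~\ref{r.2}), with Theorem~\ref{t2} itself obtained by analyzing the finite-dimensional approximations $f_j$ of $f$ in the spirit of Evans~\cite{evans2014envelopes}. At the core is a finite-dimensional computation: at a differentiable point $(t_j, x_j)$ of $f_j$, I will linearize the equation and introduce the solution of the resulting adjoint (transport-type) equation as a probability density $\rho_j$. Pairing the linearization against $\rho_j$ and integrating over the ambient Euclidean space will produce an approximate envelope identity with explicit error terms; this is the reason Section~\ref{s.fin_d_PDE} keeps errors rather than passing immediately to the cleaner final form of~\cite{evans2014envelopes}.

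For this computation to make sense, the finite-dimensional initial condition (built from $\psi$ on the projected cone $\pj_j(\C_2)$) must first be $C^1$-extended to all of the ambient Euclidean space; this is the role of Lemma~\ref{l.ext}, and one cannot control the extension outside the cone in a manner uniform in $j$. As a consequence the measures $\gamma^j$ produced from the adjoint density may place mass outside $\pj_j(\C_2)$. Two things must then be proved in tandem: a moment estimate yielding $L^2$-tightness, and an asymptotic-support estimate ensuring that the mass outside the cone vanishes as $j\to\infty$. Together these deliver a subsequential weak limit $\gamma_{t,q}$ supported on $\C_2$.

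Given the precompactness of $(\partial_t f_j(t_j,x_j),\nabla f_j(t_j,x_j))$ with limit $(a,p)$, passing to the limit in the approximate envelope identities along a subsequence with $(t_j,x_j)\to(t,q)$ yields the three identities of~\eqref{e.r.2(1)}, which is Theorem~\ref{t2}. Theorem~\ref{t} then follows by choosing differentiable points $(t_j,x_j)$ of $f_j$ converging to a prescribed $(t,q)\in[0,\infty)\times\C_2$ (Rademacher in finite dimensions combined with density of the finite-dimensional subspaces in $L^2$), applying Theorem~\ref{t2}, and then substituting
$$ta \;=\; t\!\int_{\C_2}\!\bxi(\bnabla\psi(q'))\,\d\gamma_{t,q}(q'),\qquad \la q,p\ra_\cH\;=\;\int_{\C_2}\!\la q,\bnabla\psi(q')\ra_\cH\,\d\gamma_{t,q}(q')$$
into the first identity of~\eqref{e.r.2(1)}; a direct rearrangement then gives~\eqref{e.t}.

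The main obstacle I expect is the interaction between the empty interior of $\C_2$ in $L^2$ and the fact that the $C^1$-extensions cannot be controlled outside the cone. Together these force the moment estimate and the asymptotic-cone-support estimate to be obtained \emph{simultaneously} and \emph{uniformly in $j$}, which is where the bulk of the technical effort in Section~\ref{s.fin_d_PDE} will lie. A related subtlety is that the differentiable approximating points $(t_j,x_j)$ must be chosen inside the relative interior of $\pj_j(\C_2)$ for the adjoint construction to run cleanly and for the limiting support property of $\gamma_{t,q}$ to be secured; the empty-interior property of $\C_2$ makes this choice nontrivial to arrange.
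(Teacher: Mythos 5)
Your proposal is correct and follows essentially the same route as the paper: Theorem~\ref{t} is deduced from Theorem~\ref{t2} by choosing (via Rademacher's theorem) differentiable points $(t_j,x_j)\in(0,\infty)\times\itr(\mcl Q^j_2)$ near $(t,\pj_j q)$ and then substituting the expressions for $a$ and $p$ into the first identity of~\eqref{e.t2}, while Theorem~\ref{t2} itself rests on the Evans-type adjoint computation with explicit error terms (Lemma~\ref{l.visc_app}), the $C^1$-extension Lemma~\ref{l.ext}, the first and second moment bounds, the asymptotic-support-on-the-cone estimate, and the tightness lemma. One small point your sketch compresses: the adjoint equation is actually posed for the doubly regularized approximations $f_{j,\eta,\eps}$ (mollified nonlinearity plus vanishing viscosity), and Lemma~\ref{l.touch_approx} is used to transfer the differentiable point of $f_j$ to nearby points of $f_{j,\eta,\eps}$ before passing to the limit; this is a technical refinement of the plan rather than a departure from it.
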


Recall $\bxi$ from~\eqref{e.bxi}. Due to the boundedness of $\bnabla\psi$ in~\ref{i.A}, the term $\bxi\left(\bnabla\psi\right)$ is indeed finite. As in Definition~\ref{d.vis_sol}, the equation $\HJ(\C_2,\H;\psi)$ is understood as
\begin{align*}
\begin{cases}
    \partial_t f - \H (\nabla f) = 0, \quad &\text{on }[0,\infty)\times \C_2,
    \\
    f(0,\cdot) = \psi, &\text{on }\C_2.
\end{cases}
\end{align*}
We view this as a modification of~\eqref{e.hj}.
We have two remarks to make.

\begin{remark}[Interpretation of~\eqref{e.hj}]
In the informal statement, $f$ is taken to be the ``unique viscosity solution'' of~\eqref{e.hj}, which is different from the modified equation $\HJ(\C_2,\H;\psi)$ in Theorem~\ref{t}. Let us justify this choice. First, since the functional derivative is viewed as an element in $L^2$ (see Definition~\ref{d.Frechet}), for general $\xi$ (for instance, $\xi(a) = |a|^3$) it is possible that $\bxi$ (defined in~\eqref{e.bxi}) is not finite on $L^2$. Now, if $\bar\xi$ is a regularization, then its Lipschitzness ensures that $\bar\xi$ grows at most linearly and thus $\int_0^1 \bar \xi(p(s))\d s$ is finite at every $p\in L^2$. Also, since $\bnabla F_N(t,q) \in \mcl Q_\infty$ and $|\bnabla F_N(t,q)|_{L^\infty}\leq 1$ at every $(t,q)$ in the spin glass setting (see~\cite[Proposition~5.1]{chen2023free}), heuristically we expect $f$ as the limit to satisfy $\bnabla f(t,q) \in \mcl Q_\infty$ and $|\bnabla f(t,q)|_{L^\infty}\leq 1$. Then, by \ref{i.c1} for $\bar\xi$ and~\ref{i.d1} for $\H$, heuristically, we always have $\bxi(\bnabla f) = \H(\bnabla f)$. In this way, we interpret the equation~\eqref{e.hj} as $\HJ(\C_2,\H;\psi)$, which is exactly the interpretation used in~\cite{chen2022hamilton} (see Definition~4.2 therein). 
\end{remark}

\begin{remark}[Well-posedness and independence of regularization]
It was shown in~\cite[Theorem~4.6]{chen2022hamilton} that, under the condition in Theorem~\ref{t}, there is a unique Lipschitz viscosity solution $f$ of $\HJ(\C_2,\H;\psi)$. Moreover, ~\cite[Theorem~4.6~(1)]{chen2022hamilton} shows that $f$ is always the limit of finite-dimensional approximations independent of $\bar \xi$, which implies that indeed our interpretation of~\eqref{e.hj} does not \rv{depend} on the choice of regularization.
\end{remark}

\subsection{Precise statement of Result~\ref{r.2}}

We start with describing a natural way to approximate the HJ equation~\eqref{e.hj}, which requires the introduction of projections and lifts.

Recall that, for $a,b\in \S^\D$, the inner product is written as $a\cdot b =\sum_{i,j=1}^\D a_{i,j}b_{i,j}$. For $j\in\N$, let $\cL^j$ be the linear space with the inner product given by
\begin{align}\label{e.<x,y>_cL^j=} 
    \cL^j = (\S^\D)^{2^j},\qquad \la x,y\ra_{\cL^j} = \frac{1}{2^j} \sum_{k=1}^{2^j}x_k\cdot y_k
\end{align}
where $x = (x_i)_{1\leq k\leq 2^j} \in \S^\D$ and similarly for $y$. Notice the normalization factor $2^{-j}$.

We associate with $j \in\N$ a dyadic partition $([\frac{k-1}{2^j},\frac{k}{2^j}))_{1\leq k\leq 2^j}$ of $[0,1)$. We define projection $\pj_j:\cH\to\cL^j$ and lift $\lf_j :\cL^j\to\cH$ by
\begin{gather}\label{e.pj,lj=}
    \pj_j \kappa = \left(2^j\int_{\frac{k-1}{2^j}}^\frac{k}{2^j}\kappa(s)\d s\right)_{1\leq k \leq 2^j},\quad\forall \kappa \in\cH; \qquad\qquad
    \lf_j a = \sum_{k=1}^{2^j}a_k \mathds{1}_{\big[\frac{k-1}{2^j}, \frac{k}{2^j}\big)},\quad\forall a \in \cL^j.
\end{gather}

For each $j\in\N$, let $\mcl Q^j_2$ be given by
\begin{align}\label{e.Q^j_2=}
    \mcl Q^j_2 =  \Ll\{x\in (\S^\D_+)^{2^j}\subset \cL^j:\quad  x_k-x_{k-1} \in \S^\D_+,\ \forall k\in\{2,\dots,2^j\}\Rr\}\subset \cL^j,
\end{align}
which is the closed convex cone whose elements contain positive semi-definite and increasing entries. 
In Lemma~\ref{l.proj_cones}, we will verify $\pj_j(\mcl Q_2)=\mcl Q^j_2$.
We denote by $\itr(\mcl Q^j_2)$ the interior of $\mcl Q^j_2$ in $\cL^j$.

For a function $g$ defined on a subset of $\R\times \cH$ (resp.\ $\cH$), we define its \textbf{$j$-projection} $g^j$ by
\begin{align}\label{e.j-projection}
    \text{$g^j(t,a) = g(t,\lf_j a)$ \qquad (resp.\ $g^j(a) = g(\lf_ja)$)}.
\end{align}
For a function $h$ defined on a subset of $\R\times \cL^j$ (resp.\ $\cL^j$), we define its \textbf{lift} $h^\uparrow$ by 
\begin{align}\label{e.lift=}
    \text{$h^\uparrow(t,{q}) = h(t,\pj_j{q})$ 
    \qquad (resp.\ $h^\uparrow({q}) = h(\pj_j{q})$).}
\end{align}

Recall the definition of $\bxi$ in~\eqref{e.bxi} and let $\psi$ be the initial condition given in~\ref{i.A}.
For each $j\in\N$, we consider their $j$-projections $\bxi^j$ and $\psi^j$ which have the following expressions:
\begin{align}\label{e.bxi^j=psi^j=}
    \bxi^j(x) =  2^{-j}\sum_{k=1}^{2^j}\xi\Ll(x_k\Rr),\quad \forall x\in \cL^j; \qquad \psi^j(x) = \psi\Ll(\sum_{k=1}^{2^j}x_k \mathds{1}_{\big[\frac{k-1}{2^j}, \frac{k}{2^j}\big)}\Rr),\qquad\forall x \in \mcl Q^j_2.
\end{align}
For every $j\in \N$, let 
\begin{align}\label{e.f_j=}
    \text{$f_j$ be the unique Lipschitz viscosity solution of $\HJ(\itr(\mcl Q^j_2), \bxi^j;\psi^j)$}
\end{align}
whose expression is
\begin{align*}
\begin{cases}
    \partial_t f_j - \bxi^j (\nabla f_j) = 0, \quad &\text{on }[0,\infty)\times \itr(\mcl Q^j_2),
    \\
    f_j(0,\cdot) = \psi^j, &\text{on }\itr(\mcl Q^j_2).
\end{cases}
\end{align*}
As before, since $f_j$ is finite-dimensional, we use $\nabla$ (instead of $\bnabla$) to denote the gradient operator.
The well-posedness of this equation is ensured by~\cite[Theorem~2.2]{chen2022hamilton} (based on~\cite{chen2022hamiltonCones}). Notice that there is no need to impose any boundary condition, thanks to the monotonicity of $\bxi^j$ and $\psi^j$ inherited from that of $\xi$ and $\psi$.
We can extend the domain of $f_j$ to $[0,\infty)\times \mcl Q^j_2$ via continuity.
We view $f_j$ as the approximation of the equation~\eqref{e.hj} restricted to step functions on the $2^j$-dyadic partition. The following is a restatement of \cite[Theorem~4.6~(1)]{chen2022hamilton}.

\begin{lemma}[\cite{chen2022hamilton} Finite-dimensional approximations]\label{l.f_j_approx}
We assume the setup in Theorem~\ref{t} and let $f$ be given as therein. For each $j\in\N$, let $f_j$ be given as in~\eqref{e.f_j=}. Then, $(f^\uparrow_j)_{j\in\N}$ converges to $f$ in the local uniform topology on $[0,\infty)\times \mcl Q_2$.
\end{lemma}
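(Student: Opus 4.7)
Since this is a direct restatement of \cite[Theorem~4.6(1)]{chen2022hamilton}, in practice one invokes that reference. The natural PDE strategy I would use relies on the Barles--Perthame method of half-relaxed limits combined with the comparison principle for $\HJ(\mcl Q_2,\H;\psi)$ whose well-posedness is recalled in the remark after Theorem~\ref{t}.

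First, I would establish uniform-in-$j$ Lipschitz and pointwise bounds on $(f_j)_{j\in\N}$. A direct computation shows that $\lf_j:\cL^j\to \cH$ is an isometry ($|\lf_j a|_{L^2}^2 = 2^{-j}\sum_k |a_k|^2 = |a|_{\cL^j}^2$) while $\pj_j$ is a contraction, so assumption~\ref{i.A} gives that $\psi^j$ is $1$-Lipschitz on $\mcl Q^j_2$ with $\nabla\psi^j(x)=\pj_j\bnabla\psi(\lf_j x)$ of $\cL^j$-norm at most $1$. Standard finite-dimensional HJ theory propagates this along the flow: $|\nabla f_j(t,\cdot)|_{\cL^j}\leq 1$ for every $t\geq 0$, so $|\partial_t f_j|=|\bxi^j(\nabla f_j)|$ is bounded uniformly in $j$ by the Lipschitz constant of $\xi$ on the unit ball of $\S^\D$. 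Because $\lf_j\pj_j$ is a contraction on $\cH$, the family $(f_j^\uparrow)$ is equi-Lipschitz and equi-bounded on compacts of $[0,\infty)\times \mcl Q_2$. Next, I would define the upper and lower half-relaxed limits
\begin{align*}
\bar f(t,q) = \limsup_{\substack{j\to\infty\\(s,r)\to(t,q)}} f_j^\uparrow(s,r),\qquad \underline f(t,q) = \liminf_{\substack{j\to\infty\\(s,r)\to(t,q)}} f_j^\uparrow(s,r),
\end{align*}
check that both recover $\psi$ at $t=0$ (from $\psi(\lf_j\pj_j q)\to \psi(q)$, using $\lf_j\pj_j q\to q$ in $\cH$ by Lebesgue differentiation and the equicontinuity of $\psi$), and verify that $\bar f$ and $\underline f$ are respectively a viscosity subsolution and supersolution of $\HJ(\mcl Q_2,\H;\psi)$. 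Comparison then forces $\bar f\leq f\leq \underline f\leq \bar f$, and equi-Lipschitzness upgrades the resulting pointwise equality to local uniform convergence.

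The main obstacle is the viscosity test for $\bar f$ and $\underline f$: given a smooth test function $\phi$ on $[0,\infty)\times\mcl Q_2$ such that $\bar f-\phi$ has, say, a local maximum at $(t_0,q_0)$, one needs near-extremizers $(t_j,x_j)\in[0,\infty)\times\itr(\mcl Q^j_2)$ of $f_j(\cdot,\cdot) - \phi(\cdot,\lf_j\cdot)$ at which the finite-dimensional equation can be invoked and then passed to the limit. The difficulty is twofold: the cone $\mcl Q_2$ has empty $\cH$-interior so $q_0$ always ``lies on the boundary'', and $\pj_j q_0$ typically sits on $\partial\mcl Q^j_2$ where the equation is not directly available. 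I would perturb the candidate extremizer by a small element of $\itr(\mcl Q^j_2)$ and exploit the $\mcl Q^*_2$-monotonicity inherited by $f_j$ from those of $\psi^j$ and $\bxi^j$ (both traced back to \ref{i.A} and \ref{i.B}) to preserve the extremality up to vanishing error. The consistency $\bxi^j(\pj_j p)\to\H(p)$ for $p\in\mcl Q_2$ with $|p|_{L^\infty}\leq 1$, which follows from \ref{i.c1}, \ref{i.d1}, and Lebesgue differentiation, then matches the finite-dimensional symbol to the infinite-dimensional one and closes the argument.
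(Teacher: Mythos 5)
The paper itself offers no proof of this lemma: the sentence immediately preceding it states that it is ``a restatement of \cite[Theorem~4.6~(1)]{chen2022hamilton}'', so invoking the reference, as you do in your opening line, is exactly what the paper does. The Barles--Perthame sketch you add as an alternative is a sensible route and correctly flags the two real obstacles (empty interior of $\mcl Q_2$ and the possibility that $\pj_j q_0$ lands on $\partial\mcl Q^j_2$), but it has a gap in how it matches the finite- and infinite-dimensional symbols.

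The gap: your consistency claim $\bxi^j(\pj_j p)\to\H(p)$ is only valid for $p\in\mcl Q_2$ with $|p|_{L^\infty}\leq 1$, because $\bxi$ and $\H$ coincide only on that set (\ref{i.c1}, \ref{i.d1}). In the Barles--Perthame test, however, the quantity you must pass to the limit is $\bxi^j\bigl(\nabla\phi_j(t_j,x_j)\bigr) = \bxi\bigl((\bnabla\phi(t_j,\lf_jx_j))^\j\bigr)$ where $\phi$ is an arbitrary smooth test function, so its gradient is an arbitrary $L^2$ element and need not satisfy the $L^\infty$-bound or the monotonicity constraint; in particular $\bxi$ of the relaxed limit need not equal $\H$, and $\bxi$ may not even be finite. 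The fix, which is also what the paper uses in Lemma~\ref{l.f_j,0_cvg_f_j}, is to first establish that $f_j$ also solves $\HJ(\itr(\mcl Q^j_2),\H^j;\psi^j)$, i.e.\ one may replace the symbol $\bxi^j$ by the $j$-projection $\H^j$ of $\H$; this holds because the a~priori gradient bound on $f_j$ (the $\ell^\infty$-bound from \cite[Proposition~2.4]{chen2022hamilton}, not the $\cL^j$-bound you quote, and propagated through the cone theory of~\cite{chen2022hamiltonCones}, not ``standard'' whole-space HJ theory) confines $\nabla f_j$ to the set where $\bxi^j = \H^j$. Once the symbol is $\H^j$, the consistency $\H^j(\pj_j\kappa) = \H(\kappa^{(j)}) \to \H(\kappa)$ for \emph{every} $\kappa\in\cH$ follows from Lemma~\ref{l.basics_(j)}~\eqref{i.cvg} and the $L^2$-Lipschitzness of $\H$ in \ref{i.d2}, and this is what lets the half-relaxed-limit argument close. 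The interior-perturbation device you gesture at is the right tool for the boundary issue, but it is nontrivial and is precisely where the cited reference does the real work.
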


Here and henceforth, convergence in the local uniform topology means uniform convergence on every bounded metric ball. Here, $f^\uparrow_j$ is well-defined on $[0,\infty)\times \mcl Q_2$ due to $\pj_j(\mcl Q_2)=\mcl Q^j_2$ given in Lemma~\ref{l.proj_cones}.
We are ready to state the precise version of Result~\ref{r.2}.

\begin{theorem}[Envelope representation for limits of derivatives]\label{t2}
We assume the setup in Theorem~\ref{t}. For each $j\in\N$, let $f_j$ be given as in~\eqref{e.f_j=} and let $(t_j,x_j)\in (0,\infty)\times \itr(\mcl Q^j_2)$ be a differentiable point of $f_j$. Then, the sequence
\begin{align}\label{e.(Df_j...)}
    \Big(\,\partial_t f_j(t_j,x_j),\ \lf_j\big(\nabla f_j(t_j,x_j)\big)\,\Big)_{j\in\N}
\end{align}
is precompact in $\R\times L^2$.
Assume that there is $(t,q) \in [0,\infty)\times \mcl Q_2$ such that $(t_j,\lf_j x_j)_{j\in\N}$ converges in $\R\times L^2$ to $(t, q)$. Then, for any subsequential limit $(a,p)\in \R\times L^2$ of the sequence~\eqref{e.(Df_j...)}, there is a Borel probability measure $\gamma_{t,q}$ on $\mcl Q_2$ such that
\begin{gather}\label{e.t2}
\begin{split}
    f(t,{q}) - \la {q}, p\ra_\cH -ta = \int_{\C_2} \psi({q'}) -\la{q'}, \bnabla\psi({q'})\ra_{\cH}  \d \gamma_{t,{q}}({q'}),
    \\
    a = \int_{\C_2} \bxi\left(\bnabla\psi({q'})\right)\d \gamma_{t,{q}}({q'}),\qquad p = \int_{\C_2} \bnabla\psi({q'}) \d \gamma_{t,{q}}({q'}).
\end{split}
\end{gather}
In particular, since we always have $a=\bxi(p)$, the measure $\gamma_{t,q}$ also satisfies
\begin{align}\label{e.t2_particular}
    \int_{\C_2} \bxi\left(\bnabla\psi({q'})\right)\d \gamma_{t,{q}}({q'}) = \bxi\left(\int_{\C_2} \bnabla\psi({q'})\d \gamma_{t,{q}}({q'})\right).
\end{align}
\end{theorem}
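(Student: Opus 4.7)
The plan is to adapt Evans' adjoint-equation technique at each finite level $j$ and then pass to the limit $j\to\infty$. Fix $j$ and a differentiable point $(t_j, x_j)\in (0,\infty)\times \itr(\mcl Q^j_2)$ of $f_j$. To work on all of $\cL^j$, I invoke the $C^1$-extension (Lemma~\ref{l.ext}) to extend $\psi^j$ and, if necessary, $\bxi^j$ off the cone while preserving their structure near $\mcl Q^j_2$. Following~\cite{evans2014envelopes}, I consider the formal adjoint of the linearization of $\partial_t f_j - \bxi^j(\nabla f_j)=0$ around the solution $f_j$, namely
\[
    -\partial_t \sigma^j - \div\bigl(\sigma^j\, \nabla\bxi^j(\nabla f_j)\bigr)=0,
\]
run backward from $t_j$ with terminal datum $\delta_{x_j}$. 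Its solution is a probability measure $\sigma^j_{t_j,x_j}$ on $\cL^j$ supported (up to a regularization parameter that one later sends to zero) on backward characteristic foot-points $x' = X^j_0$. Testing the affine-solution identity
\[
    f_j(t_j,x_j) \approx \psi^j(x') + \la x_j - x', \nabla \psi^j(x')\ra_{\cL^j} + t_j\, \bxi^j\!\bigl(\nabla\psi^j(x')\bigr)
\]
against $\sigma^j_{t_j,x_j}$, and differentiating in $x_j$ and $t_j$, produces finite-dimensional counterparts of~\eqref{e.t2} with explicit error terms that I expect to vanish in the joint limit (regularization first, then $j\to\infty$).

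I then pass to the limit. Precompactness of the sequence~\eqref{e.(Df_j...)} is immediate: assumption~\ref{i.A} gives $|\nabla \psi^j|_{\cL^j}\le 1$ uniformly, hence $f_j$ is uniformly Lipschitz and both $\partial_t f_j(t_j,x_j)$ and $\lf_j(\nabla f_j(t_j,x_j))$ stay in a bounded set of $\R\times L^2$. I push forward $\sigma^j_{t_j,x_j}$ under $\lf_j:\cL^j\to L^2$ to obtain probability measures $\gamma^j$ on $L^2$. Tightness of $(\gamma^j)$ should follow from uniform moment bounds on the foot-points (controlled in terms of $|x_j|_{\cL^j}+t_j\sup|\nabla\bxi^j(\nabla f_j)|$, which are bounded uniformly in $j$ thanks to~\ref{i.A}--\ref{i.B}), so up to a subsequence $\gamma^j\rightharpoonup \gamma_{t,q}$ on $L^2$. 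Combining this with Lemma~\ref{l.f_j_approx} (so that $f^\uparrow_j(t,q)\to f(t,q)$), convergence $\lf_j x_j\to q$ and $\lf_j(\nabla f_j(t_j,x_j))\to p$ by hypothesis, and continuity of $\bnabla\psi$ and of $\bxi$ on bounded subsets of $L^\infty$, one can take $j\to\infty$ in the finite-dimensional representations to obtain~\eqref{e.t2}. The identity~\eqref{e.t2_particular} then follows automatically from $a=\bxi(p)$ together with the two scalar representations for $a$ and $p$.

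The main obstacle I anticipate is showing that $\gamma_{t,q}$ is concentrated on $\mcl Q_2$ rather than on some larger subset of $L^2$. Because $\mcl Q_2$ has empty interior and the $C^1$-extension has no intrinsic meaning off the cone, the natural supports $\lf_j(\itr(\mcl Q^j_2))$ of the $\gamma^j$ are only ``staircase'' subsets of $\mcl Q_2$; weak limits could a priori place mass on paths lying outside $\mcl Q_2$. To rule this out I would (i) establish moment estimates on $\sigma^j_{t_j,x_j}$ and prove that they are asymptotically supported on $\mcl Q^j_2$, by exploiting the $\mcl Q_2^*$-monotonicity in~\ref{i.A}--\ref{i.B} together with the fact that characteristics of the adjoint equation travel in the direction $-\nabla\bxi^j(\nabla f_j)$, which the monotonicity in~\ref{i.B} controls; (ii) quantify the distance of $\supp \gamma^j$ from $\mcl Q_2$ and show this distance vanishes as $j\to\infty$. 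This asymptotic-support statement — alluded to in the outline of Section~\ref{s.fin_d_PDE} — is the technical heart of the argument and is precisely the issue that distinguishes this PDE on a cone with empty interior from Evans' original whole-space setting. Once this is in hand, the continuity of $\bnabla\psi$ on $\mcl Q_2$ lets one pass the limit inside the integrals in~\eqref{e.t2} without ambiguity, and the proof concludes.
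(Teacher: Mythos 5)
Your overall strategy (Evans' adjoint equation at each finite level $j$, moment bounds and asymptotic cone-support for the foot-point measures, tightness and a weak limit as $j\to\infty$, cone-support via $\mcl Q_2^*$-monotonicity) matches the paper, and you correctly identify the support-on-the-cone question as the distinguishing difficulty. However, two steps that you treat as routine are in fact genuine gaps.

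First, precompactness of the sequence~\eqref{e.(Df_j...)} is not ``immediate.'' You argue that $\lf_j(\nabla f_j(t_j,x_j))$ lies in a bounded set of $L^2$; but $L^2$ is infinite-dimensional, so norm-boundedness yields only weak precompactness, whereas the statement asserts (and the later argument needs) precompactness in the strong $L^2$ topology. The paper obtains this from the stronger fact (Lemma~\ref{l.bound_der_f_j}) that $\lf_j(\nabla f_j(t_j,x_j))$ lies in the cone $\mcl Q$ of increasing paths with a uniform $L^\infty$ bound, and then invokes Lemma~\ref{l.cvg_L^r}, which upgrades weak $L^2$ convergence to strong $L^r$ convergence precisely because of this monotone-plus-$L^\infty$ structure. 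Without an argument of this kind your first claim has no support.

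Second, you linearize around $f_j$ and run the adjoint with terminal datum $\delta_{x_j}$, but $f_j$ is only Lipschitz, so the adjoint equation must actually be set up for smooth viscous approximations $f_{j,\eta,\eps}$ (mollified nonlinearity $\H^j_\eta$ and viscosity $\eps\Delta$). The crux, which your sketch omits, is that one then needs points $(t_{j,\eta,\eps},x_{j,\eta,\eps})$ at which the derivatives $(\partial_t,\nabla)f_{j,\eta,\eps}$ converge to $(\partial_t,\nabla)f_j(t_j,x_j)$ as $\eta,\eps\to 0$; this does \emph{not} follow from locally uniform convergence of the approximations (derivatives need not converge at a fixed differentiable point). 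The paper handles this via a touching-function argument (Lemma~\ref{l.touch_approx}) that produces such approximating points, and the approximating points additionally need to satisfy the lower bound~\ref{i.p_i} of Proposition~\ref{p.2nd_eq_diff_pt} so that the cone-support Lemma~\ref{l.supp_cone} applies. A minor remark: extending $\bxi^j$ off the cone is not needed; the paper works with $\H^j$, the $j$-projection of the regularized nonlinearity $\H$, which is already defined on all of $\cL^j$ and globally Lipschitz.
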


\section{Analysis of approximations}\label{s.fin_d_PDE}

Throughout this section, we assume the setup in Theorem~\ref{t}. This section contains the main technical results.
In Section~\ref{s.basics}, we give more detailed results on projections and lifts. In Section~\ref{s.f-dPDEs}, we introduce more families of approximation equations and study their properties. In Section~\ref{s.other_prelim_results}, we prove other results needed in the next section.

\subsection{Basics}\label{s.basics}

Recall the linear space $\cL^j$ and its inner product in~\eqref{e.<x,y>_cL^j=}.

\subsubsection{Calculus on $\cL^j$}

For a function $g:\cL^j\to\R$, we write $\nabla g = (\partial_i g)_{i=1}^{2^j}$ where $\partial_i g$ is given by 
\begin{align*}
    g(x_1,\dots,x_{i-1},x_i+\eps a,x_{i+1},\dots,x_{2^j})-g(x) = \eps a\cdot \partial_i g(x) + o(\eps)
\end{align*}
for every $a\in\S^\D$ and $\eps>0$ small. Hence, we have $\partial_i g(x) \in \S^\D$ and $\nabla g(x)\in \cL^j$.

We want to express $\partial_i g$ in coordinates.
Fix an orthonormal basis $(e_k)_{k=1}^{D(D+1)/2}$ of $\S^\D$. Then, we set 
\begin{align*}
    \partial_{i,k}g(x) = \frac{\d}{\d \eps} g(x_1,\dots,x_{i-1},x_i+\eps e_k,x_{i+1},\dots,x_{2^j})\Big|_{\eps=0}
\end{align*}
and we can express
\begin{align*}
    \partial_i g(x) = (\partial_{i,k} g(x) e_k)_{k=1}^{D(D+1)/2}.
\end{align*}
Equivalently, $\partial_{i,k} = e_k\cdot\partial_i$.

We define the Laplacian and the divergence operator by
\begin{align}\label{e.Delta,div=}
    \Delta g= \frac{1}{2^j} \sum_{i=1}^{2^j}\sum_{k=1}^{D(D+1)/2}\partial_{i,k}^2g,\qquad \div v =\frac{1}{2^j} \sum_{i=1}^{2^j}\sum_{k=1}^{D(D+1)/2}\partial_{i,k}(v_i\cdot e_k)
\end{align}
where $v$ is a function from a subset of $\cL^j$ to $\cL^j$. Then, if $g,h, v$ are regular and decaying sufficiently fast, we have the usual integration by parts formulas
\begin{gather}\label{e.integration_by_parts}
    \int g\Delta h = -\int \la \nabla g,\nabla h\ra_{\cL^j},\qquad
    \int g \,\div v = -\int \la \nabla g, v\ra_{\cL^j}
\end{gather}
with respect to the $2^{j-1}D(D+1)$-dimensional Lebesgue measure on $\cL^j$. Notice the normalizing factor $\frac{1}{2^j}$ in $\Delta$ and $\div$, which is related to that in~\eqref{e.<x,y>_cL^j=}. 

\subsubsection{Projection and lift}

Recall projections and lifts defined in~\eqref{e.pj,lj=}.
For $\kappa\in\cH$, we define $\kappa^\j $ to be a locally-averaged approximation of $\kappa$ given by
\begin{gather}\label{e.kappa^(j)}
    \kappa^\j = \lf_j\pj_j\kappa = \sum_{k=1}^{2^j}\left(2^j\int_{\frac{k-1}{2^j}}^\frac{k}{2^j}\kappa(s)\d s\right) \mathds{1}_{\big[\frac{k-1}{2^j}, \frac{k}{2^j}\big)}.
\end{gather}
We recall some basic properties from \cite[Lemma~3.3]{chen2022hamilton}.

\begin{lemma}[\cite{chen2022hamilton} Basic properties of projects and lifts]\label{l.basics_(j)}
For every $j\in\N$, the following holds:
\begin{enumerate}

\item \label{i.isometric}  $\lf_j$ is isometric: $\la \lf_jx,\lf_jy\ra_{\cH}= \la x,y\ra_{\cL^j}$ for every $x,y\in\cL^j$;

\item \label{i.pj_lf=ID}
 $\pj_j\lf_j$ is the identity map on $\cL^j$:
$\pj_j\lf_j x=x$ for every $x\in\cL^j$;

\item \label{i.|iota^j|<|iota|}
$\pj_j$ is a contraction: $|\pj_j \kappa|_{\cL^j}\leq |\kappa|_\cH$, or equivalently $|\kappa^\j|_\cH\leq |\kappa|_\cH$, for every $\kappa\in\cH$;

\item \label{i.projective} 
if $j\leq j'$, then $\pj_j \lf_{j'}\pj_{j'}\kappa =\pj_j\kappa$ for every $\kappa \in \cH$.

\end{enumerate}
In addition, the following convergence result holds:
\begin{enumerate} \setcounter{enumi}{6}
    \item \label{i.cvg} for every $\kappa\in\cH$, $\lim_{j\to\infty}\kappa^\j =\kappa$ in $\cH$.

\end{enumerate}

\end{lemma}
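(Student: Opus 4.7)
The plan is to verify the items directly from the definitions of $\pj_j$ and $\lf_j$ in~\eqref{e.pj,lj=}, keeping in mind two bookkeeping points: the inner product on $\cL^j$ carries a factor of $2^{-j}$ (see~\eqref{e.<x,y>_cL^j=}), and the dyadic intervals $I_k^j:=[\tfrac{k-1}{2^j},\tfrac{k}{2^j})$ have Lebesgue length $2^{-j}$. These two $2^{-j}$ factors are precisely what makes the identities match, so the whole lemma is really a careful accounting of normalizations together with one genuine convergence result.

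Items (1) and (2) are direct computation. For (1), since $\lf_j x$ takes the constant value $x_k$ on $I_k^j$, the inner product $\la \lf_j x,\lf_j y\ra_\cH$ unfolds to $\sum_k x_k\cdot y_k\cdot 2^{-j}=\la x,y\ra_{\cL^j}$. For (2), we have $(\pj_j\lf_j x)_k = 2^j\int_{I_k^j} x_k\,\d s=x_k$. Item (3) is then a single Cauchy--Schwarz per interval: $|(\pj_j\kappa)_k|^2\leq 2^j\int_{I_k^j}|\kappa|^2$; multiplying by $2^{-j}$ and summing over $k$ gives $|\pj_j\kappa|_{\cL^j}\leq |\kappa|_\cH$, and the equivalent form $|\kappa^\j|_\cH\leq|\kappa|_\cH$ follows from the isometry in (1). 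Item (4) hinges on the refinement structure: for $j\leq j'$, each $I_k^j$ is a disjoint union of intervals in $\{I_l^{j'}\}_l$, so averaging $\lf_{j'}\pj_{j'}\kappa$ (which is piecewise constant on the finer partition, with values equal to local averages of $\kappa$) over $I_k^j$ yields the same value as averaging $\kappa$ directly, by linearity of the integral.

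The only real limiting argument is (7), which I expect to be the main (though still standard) point. The strategy is a density-plus-uniform-bound argument: item (3) says the linear operators $T_j:\kappa\mapsto \kappa^\j$ are contractions on $\cH$; on the dense subspace of finite dyadic step functions, $T_j$ eventually acts as the identity (once the partition resolves the step function, which is either a direct check or a special case of (4)); a three-$\eps$ argument then gives $T_j\kappa\to\kappa$ for every $\kappa\in\cH$. Equivalently, one can recognize that $\kappa\mapsto\kappa^\j$ is precisely the conditional expectation of $\kappa$ with respect to the $\sigma$-algebra generated by the partition $\{I_k^j\}_{k=1}^{2^j}$, so (7) reduces to the $L^2$-martingale convergence theorem for the dyadic filtration. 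Either route works and no infinite-dimensional subtlety enters, because the base space $[0,1)$ is still a genuine probability space; the range space $\S^\D$ is finite-dimensional and plays no role beyond the entry-wise inner product.
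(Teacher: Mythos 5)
Your proof is correct, and since the paper itself gives no proof here (the lemma is a verbatim restatement of Lemma~3.3 of~\cite{chen2022hamilton}), there is nothing to compare against directly; your computations for items (1)--(4) and the conditional-expectation/martingale-convergence (or equivalently density-plus-contraction) argument for item (7) are the standard and essentially unique route.
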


Recall the definitions of projections and lifts of functions from~\eqref{e.j-projection} and~\eqref{e.lift=}.
Also recall $\mcl Q^j_2$ in~\eqref{e.Q^j_2=}.
We restate \cite[Lemma~3.6]{chen2022hamilton} below.

\begin{lemma}[\cite{chen2022hamilton} Projection and lifts of differentiable functions]\label{l.derivatives}
For every $j\in \N$, the following holds.
\begin{enumerate}
    \item \label{i.char_nabla_j} If $g:\C_2\to\R$ is differentiable at $\lf_j x$ for some $x\in \C_2^j$, then $g^j:\C_2^j\to\R$ is differentiable at $x$ and $\nabla g^j(x) = \pj_j(\bnabla g(\lf_j x))$.
    \item \label{i.lift_gradient}
If $g:\C_2^j\to\R$ is differentiable at $ x$ for some $x\in \C_2^j$, then $g^\uparrow:\C_2\to\R$ is differentiable at every ${q}\in\C_2$ satisfying $\pj_j{q} = x$ and $\bnabla g^\uparrow({q}) = \lf_j(\nabla g(x)) $.
\end{enumerate}
\end{lemma}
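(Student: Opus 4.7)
The plan is to unpack Fr\'echet differentiability (Definition~\ref{d.Frechet}) directly and exploit three basic identities relating $\pj_j$ and $\lf_j$: (i) the isometry $|\lf_j a|_\cH = |a|_{\cL^j}$ from Lemma~\ref{l.basics_(j)}; (ii) the contraction $|\pj_j b|_{\cL^j} \leq |b|_\cH$, also from Lemma~\ref{l.basics_(j)}; and (iii) the adjoint relation
\[
\la \lf_j a, b\ra_\cH = \la a, \pj_j b\ra_{\cL^j}, \qquad \forall a\in\cL^j,\ b\in\cH,
\]
which follows by a direct computation using the piecewise-constant form of $\lf_j a$ and the definition~\eqref{e.pj,lj=}. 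I will also use that $\lf_j$ maps $\C_2^j$ into $\C_2$ and $\pj_j$ maps $\C_2$ into $\C_2^j$, which is immediate from inspecting the definitions of $\C_2$ in~\eqref{e.mclQ=} and $\C_2^j$ in~\eqref{e.Q^j_2=} (and is the content of the subsequent Lemma~\ref{l.proj_cones}).

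For part (1), fix $x \in \C_2^j$ and consider $y \in \C_2^j\setminus\{x\}$ tending to $x$ in $\cL^j$. By linearity of $\lf_j$, $\lf_j y - \lf_j x = \lf_j(y-x)$, and by the isometry in (i), $|\lf_j y - \lf_j x|_\cH = |y-x|_{\cL^j}$, so $\lf_j y \to \lf_j x$ in $\cH$, within $\C_2$. Applying the Fr\'echet expansion of $g$ at $\lf_j x$ and rewriting the linear term via (iii),
\[
g^j(y)-g^j(x) = \la \lf_j(y-x), \bnabla g(\lf_j x)\ra_\cH + o\bigl(|\lf_j(y-x)|_\cH\bigr) = \la y-x, \pj_j\bnabla g(\lf_j x)\ra_{\cL^j} + o\bigl(|y-x|_{\cL^j}\bigr).
\]
This proves that $g^j$ is Fr\'echet differentiable at $x$ with $\nabla g^j(x) = \pj_j\bnabla g(\lf_j x)$; uniqueness of the derivative is automatic since the linear span of $\C_2^j$ is all of $\cL^j$.

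For part (2), fix $q \in \C_2$ with $\pj_j q = x$, and take $q' \in \C_2\setminus\{q\}$ tending to $q$ in $\cH$. Setting $y = \pj_j q' \in \C_2^j$, (ii) gives $|y-x|_{\cL^j} \leq |q'-q|_\cH$, so $y\to x$ in $\cL^j$. Applying the Fr\'echet expansion of $g$ at $x$ and rewriting via (iii),
\[
g^\uparrow(q') - g^\uparrow(q) = \la \pj_j(q'-q), \nabla g(x)\ra_{\cL^j} + o\bigl(|y-x|_{\cL^j}\bigr) = \la q'-q, \lf_j \nabla g(x)\ra_\cH + o\bigl(|q'-q|_\cH\bigr),
\]
where the absorption of the error uses precisely the contraction (ii). Hence $g^\uparrow$ is Fr\'echet differentiable at $q$ with $\bnabla g^\uparrow(q) = \lf_j \nabla g(x)$.

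These computations are essentially bookkeeping with the adjoint pair $(\lf_j,\pj_j)$; the only minor subtleties are (a) ensuring that the approach $y\to x$ (resp.\ $q'\to q$) takes place inside the correct cone, which is guaranteed by the stability of the cones under $\lf_j$ and $\pj_j$, and (b) in part (2), passing from a finite-dimensional $o(|y-x|_{\cL^j})$ to an infinite-dimensional $o(|q'-q|_\cH)$, which is exactly what the contraction bound in (ii) provides. No serious obstacle is anticipated.
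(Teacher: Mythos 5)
Your proof is correct. The paper does not actually prove this lemma --- it cites it as a restatement of Lemma~3.6 in the reference \cite{chen2022hamilton} --- so there is no internal proof to compare against, but the argument you give is exactly the natural one: identify the adjoint pair $(\lf_j,\pj_j)$, use the isometry of $\lf_j$ (for part (1)) and the contraction property of $\pj_j$ (for part (2)) to transfer the little-$o$ error between $\cL^j$ and $\cH$, and use the cone-preservation properties from Lemma~\ref{l.proj_cones} so that the competing points stay in the correct domain of the respective Fr\'echet definitions. One small point worth making explicit in part (2): the chain of inequalities needed is $|E(\pj_j q')| \le \epsilon\,|\pj_j q' - x|_{\cL^j} \le \epsilon\,|q'-q|_\cH$ whenever $|q'-q|_\cH \le \delta$, where the first inequality uses that $|\pj_j q' - x|_{\cL^j}\le|q'-q|_\cH\le\delta$ puts you inside the modulus-of-differentiability radius for $g$ at $x$, and the second uses the contraction again to upgrade $o(|y-x|_{\cL^j})$ to $o(|q'-q|_\cH)$ --- which is precisely the observation you flagged as subtlety (b). The degenerate case $\pj_j q' = x$ with $q'\ne q$ produces zero error and zero linear term, so it causes no trouble. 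No gaps.
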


\subsubsection{Properties of cones}

Recall the definition of dual cones in Definition~\ref{d.dual_cone}. We often need the following classic result (for instance, see \cite[Corollary~6.33]{bauschke2011convex}).

\begin{lemma}[Duality of cones]\label{l.duality_cones}
Let $K$ be a nonempty closed convex cone in a Hilbert space $E$, then we have $(K^*)^*=K$.
\end{lemma}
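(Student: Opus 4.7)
The plan is to establish the two inclusions $K\subseteq (K^*)^*$ and $(K^*)^*\subseteq K$ separately. The first inclusion is immediate from the definitions: for any $x\in K$ and any $y\in K^*$, we have $\langle x,y\rangle_E\geq 0$ by the definition of $K^*$, which is exactly the condition for $x$ to lie in $(K^*)^*$. No use of closedness or convexity is needed here.

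For the reverse inclusion, which is where the hypotheses are essential, I would argue by contradiction using the Hilbert-space projection onto a nonempty closed convex set. Suppose $x\in (K^*)^*$ but $x\notin K$. Let $\bar x$ denote the nearest point projection of $x$ onto $K$, which exists and is unique since $K$ is nonempty, closed, and convex, and satisfies the variational inequality $\langle x-\bar x,\,z-\bar x\rangle_E\leq 0$ for every $z\in K$. The cone structure of $K$ then upgrades this inequality: taking $z=tw$ for $w\in K$ and letting $t\to\infty$ yields $\langle x-\bar x,w\rangle_E\leq 0$ for all $w\in K$, which rewrites as $\bar x-x\in K^*$; taking $z=0$ and $z=2\bar x$ (both in $K$) yields $\langle x-\bar x,\bar x\rangle_E=0$.

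Now I would exploit the hypothesis $x\in (K^*)^*$ applied to the element $\bar x-x\in K^*$ to obtain
\begin{equation*}
0\leq \langle x,\bar x-x\rangle_E = \langle x-\bar x,\bar x\rangle_E - |x-\bar x|_E^2 = -|x-\bar x|_E^2,
\end{equation*}
which forces $x=\bar x\in K$, contradicting $x\notin K$. Combining the two inclusions gives $(K^*)^*=K$.

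The main obstacle, modest in this case, is ensuring the projection-based argument is available; this relies on completeness of $E$ and on $K$ being closed, convex, and nonempty, which are exactly the stated hypotheses. The remaining steps are algebraic manipulations of inner products together with the cone-scaling trick that promotes the projection's variational inequality into membership in the dual cone. Since this is a textbook fact cited from \cite{bauschke2011convex}, a one-paragraph proof along the above lines (or simply a reference) would suffice in the paper.
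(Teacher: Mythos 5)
Your proof is correct and complete. The paper does not prove this lemma itself but simply cites \cite[Corollary~6.33]{bauschke2011convex}; the projection-based argument you give (the variational inequality for the metric projection onto a closed convex set, upgraded by cone-scaling to membership in the dual cone, then combined with $\langle x-\bar x,\bar x\rangle_E=0$) is precisely the standard textbook proof underlying that reference, so in effect you have reproduced the intended argument rather than found an alternative one.
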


The two lemmas below are restatements of \cite[Lemma~3.4 and Lemma~3.5]{chen2022hamilton}, respectively.

\begin{lemma}[\cite{chen2022hamilton} Characterizations of dual cones]\label{l.dual_cone}

\leavevmode
\begin{enumerate}
    \item For each $j\in\N$, the dual cone of $\C_2^j$ in $\cL^j$ is
\begin{align*}
    \Ll(\C_2^j\Rr)^* = \left\{x\in\cL^j: \sum_{i=k}^{2^j}x_i\in\S^\D_+,\quad\forall k\in\{1,2,\dots,2^j\}\right\}.
\end{align*}

    \item The dual cone of $\C_2$ in $\cH$ is
\begin{align*}
    \C_2^* = \left\{\kappa\in \cH: \int_t^1\kappa(s)\d s\in\S^\D_+,\quad \forall t\in [0,1) \right\}.
\end{align*}
\end{enumerate}
\end{lemma}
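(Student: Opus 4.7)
The plan is to reduce both parts of the lemma to the self-duality $(\S^\D_+)^*=\S^\D_+$ of the PSD cone (under the entry-wise inner product) via an Abel-summation identity in part~(1) and its integration-by-parts analogue in part~(2). The common architecture is that any element of $\C_2^j$ or $\C_2$ can be written as a non-negative superposition of single-jump step paths, and these same step paths furnish the test elements needed for the converse inclusion.

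For part~(1), fix $x\in\cL^j$ and $y\in\C_2^j$. Set $d_1:=y_1$ and $d_i:=y_i-y_{i-1}$ for $i\ge 2$, so $d_i\in\S^\D_+$ for every $i$ by~\eqref{e.Q^j_2=}, and $y_k=\sum_{i=1}^k d_i$. Swapping the order of summation yields
\begin{align*}
    2^j\,\la x,y\ra_{\cL^j} \;=\; \sum_{k=1}^{2^j} x_k \cdot \sum_{i=1}^{k} d_i \;=\; \sum_{i=1}^{2^j} d_i \cdot S_i, \qquad S_i:=\sum_{k=i}^{2^j}x_k.
\end{align*}
If every $S_i\in\S^\D_+$, self-duality of $\S^\D_+$ gives $\la x,y\ra_{\cL^j}\ge 0$. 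For the converse, test against $y_i:=a\,\mathds{1}_{i\ge k}$ with arbitrary $a\in\S^\D_+$; this lies in $\C_2^j$ and $\la x,y\ra_{\cL^j}=2^{-j}\,a\cdot S_k$, so $x\in(\C_2^j)^*$ forces $a\cdot S_k\ge 0$ for every $a\in\S^\D_+$, hence $S_k\in(\S^\D_+)^*=\S^\D_+$.

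Part~(2) follows the same architecture. The converse inclusion is immediate: the jump path $q(s):=a\,\mathds{1}_{s\ge t}$ with $a\in\S^\D_+$ and $t\in[0,1)$ lies in $\C_2$ (right-continuous, $\S^\D_+$-increasing, of finite $L^2$ norm), and $\la\kappa,q\ra_{\cH}=a\cdot\int_t^1\kappa(s)\,\d s$, so $\kappa\in\C_2^*$ forces $\int_t^1\kappa(s)\,\d s\in\S^\D_+$ for every $t\in[0,1)$. For the forward direction, given $\kappa$ with this PSD tail property and any $q\in\C_2$, I would first observe that $\pj_j q\in\C_2^j$: its entries are PSD-valued Bochner averages of $q$, and comparing averages over adjacent dyadic intervals by a shift of the integration window shows that successive entries differ by a PSD matrix, because $q$ is $\S^\D_+$-increasing. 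Applying part~(1) to $\pj_j\kappa$ and $\pj_j q$, and using
\begin{align*}
    \sum_{i=k}^{2^j}(\pj_j\kappa)_i \;=\; 2^j\int_{(k-1)/2^j}^{1}\kappa(s)\,\d s\;\in\;\S^\D_+
\end{align*}
by hypothesis, gives $\la\pj_j\kappa,\pj_j q\ra_{\cL^j}\ge 0$. Since $\la\pj_j\kappa,\pj_j q\ra_{\cL^j}=\la\kappa^\j,q^\j\ra_{\cH}$ by Lemma~\ref{l.basics_(j)}\eqref{i.isometric} and $\kappa^\j\to\kappa$, $q^\j\to q$ in $\cH$ by Lemma~\ref{l.basics_(j)}\eqref{i.cvg}, passing $j\to\infty$ yields $\la\kappa,q\ra_{\cH}\ge 0$, i.e.\ $\kappa\in\C_2^*$.

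The result is not especially deep; the only step requiring care is checking that $\pj_j$ preserves both positive semi-definiteness of entries and their monotonicity, which is a routine consequence of $\S^\D_+$ being a closed convex cone stable under Bochner averaging and translation-invariant in $s$. With that observation and the self-duality $(\S^\D_+)^*=\S^\D_+$ in hand, the two inclusions in each part are symmetric and essentially mechanical.
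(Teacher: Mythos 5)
Your argument is correct. The paper itself does not prove this lemma; it is cited as a restatement of Lemmas~3.4 and~3.5 of~\cite{chen2022hamilton}, so a line-by-line comparison is not possible here. Nevertheless the strategy you use — Abel summation together with self-duality of $\S^\D_+$ for part~(1); single-jump test paths for the converse inclusions; and, for the forward inclusion of part~(2), verifying $\pj_j q\in\C_2^j$ and $\pj_j\kappa\in(\C_2^j)^*$ and then passing $j\to\infty$ via the isometry of $\lf_j$ and the $L^2$-convergence $\kappa^\j\to\kappa$, $q^\j\to q$ — is exactly the natural route, and all the individual steps check out. Two small remarks: you could have invoked $\pj_j(\C_2)=\C_2^j$ directly from Lemma~\ref{l.proj_cones} rather than re-deriving it, and an alternative proof of the forward inclusion in part~(2) would integrate by parts against the $\S^\D_+$-valued Lebesgue--Stieltjes measure $\d q$, writing $\la\kappa,q\ra_\cH = q(0)\cdot\int_0^1\kappa + \int_{(0,1)}\bigl(\int_r^1\kappa\bigr)\cdot\d q(r)$; your discretization avoids those measure-theoretic preliminaries at the cost of an extra limit, which is a reasonable trade.
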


\begin{lemma}[\cite{chen2022hamilton} Projection and lifts of cones]\label{l.proj_cones}
For every $j\in \N$, we have
\begin{align*}
    \lf_j\Ll(\C_2^j\Rr)\subset \C_2,\quad \lf_j\Ll(\Ll(\C_2^j\Rr)^*\Rr)\subset \C_2^*,\quad \pj_j\Ll(\C_2\Rr)= \C_2^j,\quad \pj_j\Ll(\C_2^*\Rr)= \Ll(\C_2^j\Rr)^*.
\end{align*}
\end{lemma}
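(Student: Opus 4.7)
All four assertions reduce to direct verification from the explicit formulas~\eqref{e.pj,lj=}, combined with the identity $\pj_j\lf_j = \mathrm{Id}_{\cL^j}$ provided by Lemma~\ref{l.basics_(j)}(\ref{i.pj_lf=ID}). My plan is to prove the two containments involving $\lf_j$ first, and then to deduce the two equalities: one direction of each equality will follow immediately from the containments via $x = \pj_j(\lf_j x)$, leaving only the opposite direction to check by hand.

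For $\lf_j(\C_2^j)\subset \C_2$, I will take $x\in\C_2^j$ and observe that $\lf_j x$ is bounded (hence in $L^2$), right-continuous, and $\S^\D_+$-valued by construction; for the monotonicity in $\mcl Q$, any difference $(\lf_j x)(s') - (\lf_j x)(s) = x_{k'} - x_k$ telescopes into a sum of consecutive increments $x_i - x_{i-1}\in\S^\D_+$. For the more delicate $\lf_j((\C_2^j)^*)\subset \C_2^*$, I will fix $x\in(\C_2^j)^*$ and $t\in[(k-1)/2^j,k/2^j)$ and compute
\begin{align*}
    \int_t^1(\lf_j x)(s)\,\d s \;=\; \alpha\, x_k + \beta\sum_{i=k+1}^{2^j} x_i, \qquad \alpha := k/2^j - t,\quad \beta := 2^{-j}.
\end{align*}
Since $0\leq \alpha\leq\beta$, I will rewrite the right-hand side as $\alpha\sum_{i=k}^{2^j}x_i + (\beta-\alpha)\sum_{i=k+1}^{2^j}x_i$, which is a nonnegative linear combination of two tail sums lying in $\S^\D_+$ by definition of $(\C_2^j)^*$ (with the empty sum at $k=2^j$ interpreted as $0$).

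For the two equalities, the inclusions $\C_2^j\subset \pj_j(\C_2)$ and $(\C_2^j)^*\subset \pj_j(\C_2^*)$ will follow at once from the two containments just proved together with $\pj_j\lf_j=\mathrm{Id}$. The reverse direction $\pj_j(\C_2)\subset \C_2^j$ reduces to checking that each entry $(\pj_j q)_k = 2^j\int_{(k-1)/2^j}^{k/2^j} q(s)\,\d s$ lies in $\S^\D_+$ (an average of $\S^\D_+$-valued terms), and that, by a change of variables, $(\pj_j q)_k - (\pj_j q)_{k-1}$ is an average of differences $q(s + (k-1)/2^j) - q(s + (k-2)/2^j)\in\S^\D_+$. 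For $\pj_j(\C_2^*)\subset (\C_2^j)^*$, specializing $t = (k-1)/2^j$ in the characterization of $\C_2^*$ from Lemma~\ref{l.dual_cone} gives $2^{-j}\sum_{i=k}^{2^j}(\pj_j\kappa)_i = \int_{(k-1)/2^j}^1 \kappa(s)\,\d s\in\S^\D_+$, which is exactly the defining condition of $(\C_2^j)^*$.

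I do not expect any serious obstacle. The only step that needs a small algebraic observation is the convex-combination identity used in the verification of $\lf_j((\C_2^j)^*)\subset \C_2^*$, reducing the integral tail of a piecewise-constant lift to a nonnegative combination of two full tail sums of $x$; the remaining assertions are straightforward substitutions into the defining formulas.
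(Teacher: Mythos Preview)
Your proof is correct. The paper does not actually prove this lemma: it is merely stated as a restatement of \cite[Lemma~3.5]{chen2022hamilton}, so there is no in-paper argument to compare against. Your direct verification from the explicit formulas for $\pj_j$ and $\lf_j$, together with the characterizations of $\C_2^*$ and $(\C_2^j)^*$ from Lemma~\ref{l.dual_cone} and the identity $\pj_j\lf_j=\mathrm{Id}$, is exactly the natural way to establish the result; the convex-combination rewriting $\alpha\sum_{i\ge k}x_i + (\beta-\alpha)\sum_{i\ge k+1}x_i$ is the clean step that makes $\lf_j((\C_2^j)^*)\subset\C_2^*$ transparent.
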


We close this subsection with results on the approximation and the precompactness of paths.

\begin{lemma}[Approximation of paths]\label{l.L^1cvg}
    For every $j\in\N$ and every ${q}\in \C_2$,
\begin{align*}
    \left|{q} - {q}^\j\right|_{L^1}\leq 2^{\frac{3-j}{2}}\D|{q}|_\cH.
\end{align*}
\end{lemma}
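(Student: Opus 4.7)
The plan is to exploit two features of PSD-increasing paths: pointwise oscillation control via the scalar trace function, together with an $L^2$-to-$L^\infty$ type bound for monotone functions. Set $t(s)=\tr(q(s))$; by the definition of $\C_2$ (values in $\S^\D_+$ with PSD-increasing increments), $t$ is non-negative and non-decreasing on $[0,1)$. The elementary inequality $|A|\leq \tr(A)$ for $A\in\S^\D_+$ (from $\sum \lambda_i^2\leq (\sum \lambda_i)^2$) then gives $|q(s)-q(r)|\leq |t(s)-t(r)|$ for all $s,r\in[0,1)$, while Cauchy--Schwarz on the eigenvalues gives $t(s)^2\leq \D\,|q(s)|^2$ and hence $\int_0^1 t^2\,\d s\leq \D\,|q|_\cH^2$.

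On the dyadic interval $I_k=[(k-1)2^{-j},\,k2^{-j})$, Jensen's inequality yields $|q(s)-\bar q_k|\leq 2^j\int_{I_k}|q(s)-q(r)|\,\d r \leq \Delta_k := t(k 2^{-j}{-})-t((k-1)2^{-j})$, so $\int_{I_k}|q-\bar q_k|\,\d s\leq 2^{-j}\Delta_k$. Telescoping over $k=1,\dots,2^j-1$ gives a contribution of at most $2^{-j}\,t(1-2^{-j})$ from these intervals. The crucial step is then estimating $t(1-2^{-j})$: monotonicity of $t$ yields $\int_{1-2^{-j}}^1 t(s)^2\,\d s\geq 2^{-j}\,t(1-2^{-j})^2$, which combined with $\int_0^1 t^2\leq \D|q|_\cH^2$ gives $t(1-2^{-j})\leq 2^{j/2}\sqrt{\D}\,|q|_\cH$. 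Hence the first $2^j-1$ intervals contribute at most $2^{-j/2}\sqrt{\D}\,|q|_\cH$ to $|q-q^\j|_{L^1}$.

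For the last interval $I_{2^j}$ this oscillation argument breaks down, since $t(1^-)$ may be infinite even for $q\in L^2$, so I handle it separately via orthogonality: contractivity of conditional expectation gives $\int_{I_{2^j}}|q-\bar q_{2^j}|^2\,\d s\leq \int_{I_{2^j}}|q|^2\,\d s$, and then Cauchy--Schwarz in $s$ over $I_{2^j}$ yields $\int_{I_{2^j}}|q-\bar q_{2^j}|\,\d s\leq 2^{-j/2}|q|_\cH$. Combining the two pieces, $|q-q^\j|_{L^1}\leq (\sqrt{\D}+1)\,2^{-j/2}\,|q|_\cH$; since $\sqrt{\D}+1\leq 2\D\leq 2\sqrt{2}\,\D=2^{3/2}\D$ for $\D\geq 1$, we reach the announced constant $2^{(3-j)/2}\D$.

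The main obstacle is precisely that $t$ need not be bounded on $[0,1)$ (for instance $q(s)=(1-s)^{-1/4}\id$ lies in $\C_2$ with $t(1^-)=+\infty$), so a naive BV-type telescoping cannot be run over all $2^j$ intervals. Singling out the rightmost dyadic interval and replacing the oscillation bound there by a direct Cauchy--Schwarz/Jensen estimate is the one conceptual move needed; the rest is routine manipulation of the monotone-function bounds above.
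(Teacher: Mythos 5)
Your proof is correct, and it reaches the stated constant via a genuinely different route than the paper. Both arguments first reduce to the scalar function $t=\tr q$ via the inequality $|A|\leq\tr(A)$ for $A\in\S^\D_+$, exploiting that $t$ is non-negative and non-decreasing. From there the strategies diverge. The paper handles all $2^j$ dyadic intervals uniformly by introducing a free threshold $B$, splitting into the events $\{\bar q > B\}$ and $\{\bar q\leq B\}$, controlling the first by a Chebyshev-type bound $\int \bar q\,\mathds 1_{\bar q>B}\leq B^{-1}\int\bar q^2$ and the second by a careful staggered telescoping of the shifted sums, and finally optimizing over $B$. You instead isolate the single rightmost interval $I_{2^j}$ --- precisely because $t(1^-)$ may be infinite even for $q\in L^2$ --- and treat it with the centering/Cauchy--Schwarz bound $\int_{I_{2^j}}|q-\bar q_{2^j}|\leq 2^{-j/2}|q|_{L^2}$, while on the remaining $2^j-1$ intervals you telescope the oscillations $\Delta_k$ directly up to $t(1-2^{-j})$, which you then bound by $2^{j/2}\sqrt{\D}\,|q|_{L^2}$ through the monotonicity of $t$ and $\int_0^1 t^2\leq\D|q|^2_{L^2}$. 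The small telescoping subtlety (the sup of $t$ on $I_k$ is the left limit $t(k2^{-j}{-})$, so the telescoping gives at most $t(1-2^{-j})$ after using $t(k2^{-j}{-})\leq t(k2^{-j})$) goes through by right-continuity and monotonicity. Your approach avoids the optimization step entirely and in fact yields the sharper constant $(\sqrt{\D}+1)\,2^{-j/2}$, which you correctly relax to the announced $2^{(3-j)/2}\D$; the paper's threshold method trades this for not having to treat the boundary interval separately. Both are valid; yours is arguably the cleaner derivation and produces a better $\D$-dependence.
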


\begin{proof}
Setting $\delta = \frac{1}{2^j}$, we can compute
\begin{align*}
    \left|{q}-{q}^\j\right|_{L^1} = \sum_{k=1}^{\delta^{-1}}\int_{(k-1)\delta}^{k\delta}\left|{q}(s) - \delta^{-1}\int_{(k-1)\delta}^{k\delta}{q}(r) \d r\right|\d s \\
    \leq 2\delta^{-1} \sum_{k=1}^{\delta^{-1}}\int_{(k-1)\delta}^{k\delta}\int_{(k-1)\delta}^{s}\Ll|{q}(s)-{q}(r)\Rr|\d r\d s.
\end{align*}
Recall that here $|\cdot|$ is the norm associated with the entry-wise inner product on $\S^\D$. By diagonalization, it is straightforward to see $|a|\leq \sqrt{\D}\tr(a)\leq \D |a|$ for every $a\in\S^\D_+$. When $s\geq r$, by monotonicity (see~\eqref{e.mclQ=}), we have $q(s)-q(r)\in \S^\D_+$. Writing $\bar q =\tr(q)$, we thus have
\begin{align*}
    \left|{q}-{q}^\j\right|_{L^1} \leq 2\sqrt{\D}\delta^{-1} \sum_{k=1}^{\delta^{-1}}\int_{(k-1)\delta}^{k\delta}\int_{(k-1)\delta}^{s}\bar q(s)-\bar q(r)\d r\d s.
\end{align*}
For $B>0$ to be chosen, using the monotonicity of $\bar q$, we have
\begin{align*}
    \delta^{-1} \sum_{k=1}^{\delta^{-1}}\int_{(k-1)\delta}^{k\delta}\int_{(k-1)\delta}^{s}(\bar q(s)-\bar q(r))\mathds{1}_{\bar q(s)>B}\d r\d s\leq \sum_{k=1}^{\delta^{-1}}\int_{(k-1)\delta}^{k\delta}\bar q(s)\mathds{1}_{\bar q(s)>B} \d s\\
    = \int_0^1\bar q(s)\mathds{1}_{\bar q(s)>B} \d s\leq \frac{1}{B}\int_0^1 \bar q(s)^2\d s \leq \frac{\D}{B}| q|^2_\cH.
\end{align*}
For the other half, changing variables and interchanging summations, we have
\begin{align*}
    &\delta^{-1} \sum_{k=1}^{\delta^{-1}}\int_{(k-1)\delta}^{k\delta}\int_{(k-1)\delta}^{s}(\bar q(s)-\bar q(r))\mathds{1}_{\bar q(s)\leq B}\d r\d s
    \\
    &= \delta^{-1} \int_{0}^{\delta}\int_{0}^{s}\d r\d s\sum_{k=1}^{\delta^{-1}}(\bar q((k-1)\delta+s)-\bar q((k-1)\delta+r))\mathds{1}_{\bar q((k-1)\delta+s)\leq B}.
\end{align*}
Setting $\kappa(s) = \max\{k:\bar q((k-1)\delta+s)\leq B\}$. Then, the summation above is equal to
\begin{align*}
    \sum_{k=1}^{\kappa(s)}(\bar q((k-1)\delta+s)-\bar q((k-1)\delta+r))\leq \bar q((\kappa(s)-1)\delta+s) \leq B
\end{align*}
where in the first inequality, we used the fact that $-\bar q((k-1)\delta+r) + \bar q((k-2)\delta + s)\leq 0$ due to $s-r\leq \delta$, and that $\bar q\geq 0$.
Hence, we obtain
\begin{align*}
    \delta^{-1} \sum_{k=1}^{\delta^{-1}}\int_{(k-1)\delta}^{k\delta}\int_{(k-1)\delta}^{s}(\bar q(s)-\bar q(r))\mathds{1}_{\bar q(s)\leq B}\d r\d s \leq \delta^{-1} \int_{0}^{\delta}\int_{0}^{s}B\d r\d s = \frac{\delta B}{2}.
\end{align*}
In conclusion, we have $| q- q^\j|_{L^1} \leq \frac{2D^\frac{3}{2}}{B}|q|^2_\cH + \delta D^\frac{1}{2} B$. Optimizing over $B$, we get the desired result.
\end{proof}

\begin{lemma}[Upgrade of convergence]\label{l.cvg_L^r}
Let $({p}_n)_{n\in\N}$ be a sequence in $\C_\infty$ that satisfies $\sup_{n\in\N}|p_n|_{L^\infty}<\infty$ and converges weakly in $\cH$ to some ${p}_\infty$. Then, ${p}_n$ converges to ${p}_\infty$ in $L^r$ for every $r\in[1,\infty)$.
\end{lemma}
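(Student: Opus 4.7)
The plan is in three stages: transfer the regularity of the sequence to the limit, establish pointwise almost-everywhere convergence via a monotonicity-based sandwich, and conclude by dominated convergence.

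First, I would verify that $p_\infty \in \mcl Q_\infty$ with $|p_\infty|_{L^\infty} \leq M := \sup_{n}|p_n|_{L^\infty}$. Since $\mcl Q_2$ is convex and norm-closed in $\cH$, it is weakly closed, and so $p_\infty \in \mcl Q_2$. The $L^\infty$-ball of radius $M$ inside $\cH$ is likewise convex and strongly closed, hence weakly closed, which gives $|p_\infty|_{L^\infty}\leq M$. From this point on, I work with the right-continuous monotone representatives of $p_\infty$ and each $p_n$ guaranteed by the definition of $\mcl Q$ in~\eqref{e.mclQ=}.

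Second, I would prove the pointwise convergence $p_n(s)\to p_\infty(s)$ at every continuity point $s$ of $p_\infty$. Since $p_\infty$ is monotone in the $\S^\D_+$-order, each of its diagonal entries is a real-valued monotone function and so has at most countably many discontinuities; the Cauchy--Schwarz-type control of off-diagonal entries of a PSD matrix then shows that the full discontinuity set of $p_\infty$ is also countable, hence almost every $s\in[0,1)$ is a continuity point. For such an $s$ and small $\delta>0$, the monotonicity of $p_n$ yields
\begin{align*}
    \frac{1}{\delta}\int_{s-\delta}^{s} p_n(r)\, \d r \; \preceq \; p_n(s) \; \preceq \; \frac{1}{\delta}\int_{s}^{s+\delta} p_n(r)\, \d r,
\end{align*}
where $\preceq$ is the $\S^\D_+$-order. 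Testing weak convergence in $\cH$ against the entry-wise indicators of $[s-\delta,s]$ and $[s,s+\delta]$, both averages converge as $n\to\infty$ to the corresponding averages of $p_\infty$. Because $(p_n(s))_n$ is bounded by $M$ in the finite-dimensional space $\S^\D$, each subsequence has a further convergent sub-subsequence, whose limit $p^*$, by closedness of $\S^\D_+$, satisfies the same sandwich with $p_\infty$ in place of $p_n$. Letting $\delta\to 0$ and using the right-continuity of $p_\infty$ together with continuity of $p_\infty$ at $s$ (which forces left-continuity there as well) collapses both bounds to $p_\infty(s)$. Hence every subsequential limit equals $p_\infty(s)$, so the full sequence converges.

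Finally, $|p_n(s)-p_\infty(s)|^r\leq (2M)^r$ is integrable on $[0,1)$ with a.e.\ vanishing integrand, so dominated convergence gives $|p_n-p_\infty|_{L^r}\to 0$ for every $r\in[1,\infty)$. The main technical care is in the sandwich step: one must pass PSD inequalities through the weak limit first (at fixed $\delta$, send $n\to\infty$) before letting $\delta\to 0$, and use closedness of $\S^\D_+$ to carry the ordering through finite-dimensional subsequential limits; everything else is either routine weak-closedness or a direct application of dominated convergence.
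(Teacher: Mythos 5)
Your proof is correct, and it follows a genuinely different route from the paper's. The paper first establishes $L^1$-convergence directly at the level of equivalence classes: it invokes Lemma~\ref{l.L^1cvg} to bound $|p_n-p_n^{(j)}|_{L^1}$ uniformly in $n$, exploits the fact that the projections $p_n^{(j)}$ live in a fixed finite-dimensional space where weak convergence implies strong convergence, and closes with a three-term triangle inequality; the $L^\infty$-bound on $p_\infty$ is extracted afterward from a subsequence converging a.e. Your argument instead works with pointwise representatives from the start: after establishing $p_\infty \in \mcl Q_\infty$ by weak closedness of the cone and of the $L^\infty$-ball, you prove pointwise a.e.\ convergence at the continuity points of $p_\infty$ (which are conull, by the monotonicity of the diagonal entries plus the PSD off-diagonal control) via the $\S^\D_+$-ordered sandwich between local averages, passing $n\to\infty$ at fixed $\delta$ and only then $\delta\to 0$, and concludes by dominated convergence. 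Both proofs rely crucially on the monotonicity of the paths; yours packages it as a classical monotone-function argument yielding pointwise convergence, while the paper's reuses the dyadic projection machinery (Lemma~\ref{l.L^1cvg}) already built for other purposes. One small advantage of your route is that it handles the $L^\infty$-bound on the limit up front and explicitly, whereas the paper obtains it as a by-product of the $L^1$-convergence step; one small advantage of the paper's route is that it does not require discussing choices of representative or discontinuity sets. Both are valid.
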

\begin{proof}
Due to~$\rv{\sup_{n\in\N}|p_n|_{L^\infty}<\infty}$, by re-scaling, we can assume $|p_n|_{L^\infty}\leq 1$ for every $n$.
The weak convergence implies $|{p}_\infty|_\cH\leq 1$. Lemma~\ref{l.L^1cvg} yields
\begin{align}\label{e.cvg.zeta_n-zeta^j_n}
    \sup_{n\in\N\cup\{\infty\}}\Ll|{p}_n - {p}^\j_n\Rr|_{L^1} \leq 2^{\frac{3-j}{2}}\D\Ll|{p}_n\Rr|_\cH\leq 2^{\frac{3-j}{2}}\D,\quad\forall j\in\N.
\end{align}
Since $\Ll({p}^\j_n\Rr)_{n\in\N\cup\{\infty\}}$ are step functions on a fixed partition of $[0,1)$, we can map them isometrically and linearly into a finite-dimensional Euclidean space. Hence, for each $j\in\N$, we obtain from the weak convergence that
\begin{align}\label{e.cvg.zeta^j_n-zeta^j_infty}
    \lim_{n\to\infty} \Ll|{p}^\j_n-{p}^\j_\infty\Rr|_\cH =0,\quad\forall j\in\N.
\end{align}
For every $\eps>0$, we first choose $j$ sufficiently large so that $2^{\frac{3-j}{2}}\D\leq \frac{\eps}{4}$ and then use~\eqref{e.cvg.zeta^j_n-zeta^j_infty} to choose $n_0$ sufficiently large such that $\Ll|{p}^\j_n-{p}^\j_\infty\Rr|_\cH\leq \frac{\eps}{2}$ for all $n\geq n_0$. Then, by~\eqref{e.cvg.zeta_n-zeta^j_n} and the triangle inequality, we have
\begin{align*}
    \Ll|{p}_n-{p}_\infty\Rr|_{L^1} \leq \Ll|{p}_n-{p}^\j_n\Rr|_{L^1} + \Ll|{p}^\j_n-{p}^\j_\infty\Rr|_{L^1}+ \Ll|{p}^\j_\infty-{p}_\infty\Rr|_{L^1}\leq \eps
\end{align*}
for all $n\geq n_0$. Therefore, ${p}_n$ converges to ${p}_\infty$ in $L^1$.
We can choose a subsequence along which ${p}_n$ converges to ${p}_\infty$ pointwise a.e. This implies $|{p}_\infty|_{L^\infty}\leq 1$, which together with the uniform $L^\infty$-bound on ${p}_n$ upgrades the $L^1$ convergence to $L^r$ for any $r\in[1,\infty)$.
\end{proof}

\subsection{Finite-dimensional PDEs}\label{s.f-dPDEs}

We need families of finite-dimensional PDEs to approximate the solution $f$ of $\HJ(\mcl Q_2,\H;\psi)$. We start with the one already introduced in~\eqref{e.f_j=}.

\subsubsection{Bounds on derivatives of $f_j$.}

To show the precompactness of~\eqref{e.(Df_j...)} in Theorem~\ref{t2}, we need the following result. Recall $\mcl Q$ from~\eqref{e.mclQ=}, $L^\infty$ from~\eqref{e.L^p}, and the lift of functions in~\eqref{e.lift=}. 

\begin{lemma}\label{l.bound_der_f_j}
Let $j\in\N$, let $f_j$ be given as in~\eqref{e.f_j=}, and let $(t_j,x_j)\in(0,\infty)\times \itr(\mcl Q^j_2)$ be any differentiable point of $f_j$. Then, we have
\begin{align}\label{e.l.bound_der_f_j}
    \Ll|\partial_t f_j(t_j,x_j)\Rr|\leq \sup_{a\in \S^\D_+:\: |a|\leq1}\Ll|\xi(a)\Rr|,\qquad \lf_j\Ll(\nabla f_j(t_j,x_j)\Rr) \in \mcl Q,\qquad \Ll|\lf_j\Ll(\nabla f_j(t_j,x_j)\Rr)\Rr|_{L^\infty}\leq 1.
\end{align}
\end{lemma}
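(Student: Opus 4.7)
The plan is to first establish the gradient bounds $\lf_j\nabla f_j(t_j,x_j)\in\mcl Q$ and $|\lf_j\nabla f_j(t_j,x_j)|_{L^\infty}\leq 1$, after which the bound on $\partial_t f_j$ follows immediately: since $f_j$ is a viscosity solution and is classically differentiable at $(t_j,x_j)$, the PDE holds there in the classical sense, giving $\partial_t f_j(t_j,x_j)=\bxi^j(\nabla f_j(t_j,x_j))=2^{-j}\sum_{k=1}^{2^j}\xi((\nabla f_j(t_j,x_j))_k)$; the gradient bounds force $(\nabla f_j)_k\in\S^\D_+$ with $|(\nabla f_j)_k|\leq 1$ for each $k$, so every summand is bounded by $\sup_{a\in\S^\D_+,\,|a|\leq 1}|\xi(a)|$.

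The initial datum already has these properties. By Lemma~\ref{l.derivatives}\ref{i.char_nabla_j}, $\nabla\psi^j(x)=\pj_j(\bnabla\psi(\lf_j x))$, so $\lf_j\nabla\psi^j(x)$ equals the dyadic average $(\bnabla\psi(\lf_j x))^\j$. By~\ref{i.A}, $\bnabla\psi\in\mcl Q_\infty$ with $L^\infty$-norm at most $1$; since $\S^\D_+$ is a convex cone closed under integration, this averaging preserves both the $\mcl Q_\infty$-membership and the $L^\infty$-bound.

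To propagate these bounds to $t=t_j$, I will exploit the spatial translation invariance of the HJ equation together with the comparison principle available on $\itr(\mcl Q^j_2)$ without boundary conditions (thanks to the monotonicity of $\bxi^j$, as emphasized after~\eqref{e.f_j=}). For any $h\in\cL^j$ such that $x_j+\eps h\in\itr(\mcl Q^j_2)$ for small $\eps>0$, the shifted function $(t,x)\mapsto f_j(t,x+\eps h)$ is a viscosity solution of the same PDE with initial datum $\psi^j(\cdot+\eps h)$, so initial pointwise orderings propagate. I will use two admissible families of $h\in(\mcl Q^j_2)^*$, identified via Lemma~\ref{l.dual_cone}. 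First, $h=(0,\ldots,-a,a,0,\ldots,0)$ with $-a$ in position $k$ and $a\in\S^\D_+$ in position $k+1$: the $(\mcl Q^j_2)^*$-monotonicity of $\psi^j$ (inherited from $\bnabla\psi\in\mcl Q_\infty\subset(\mcl Q_2^*)^*$ by integration along segments in the convex set $\mcl Q_2$) propagates by comparison to $f_j$, which at the differentiable point forces $(\nabla f_j)_{k+1}-(\nabla f_j)_k\in\S^\D_+$. Second, $h=(0,\ldots,a,\ldots,0)$ with $a\in\S^\D_+$ in position $k_0$: the same argument yields $(\nabla f_j)_{k_0}\in\S^\D_+$; combined with the previous step this gives $\lf_j\nabla f_j(t_j,x_j)\in\mcl Q$. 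Using this second family again, the fundamental theorem of calculus together with the H\"older bound $|\la\bnabla\psi,\lf_j h\ra_\cH|\leq|\bnabla\psi|_{L^\infty}|\lf_j h|_{L^1}=|a|/2^j$ gives $|\psi^j(x+\eps h)-\psi^j(x)|\leq\eps|a|/2^j$, which propagates by comparison to the analogous bound on $f_j$; passing $\eps\to 0$ produces $(\nabla f_j)_{k_0}\cdot a\leq|a|$ for every $a\in\S^\D_+$, and choosing $a=(\nabla f_j)_{k_0}\in\S^\D_+$ gives $|(\nabla f_j)_{k_0}|\leq 1$, whence $|\lf_j\nabla f_j(t_j,x_j)|_{L^\infty}\leq 1$.

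The most delicate step is the comparison between shifted solutions on the cone $\itr(\mcl Q^j_2)$ without boundary conditions; this rests on the cone-specific well-posedness framework of~\cite{chen2022hamilton,chen2022hamiltonCones} already invoked to define $f_j$, together with the fact that the shifts above keep a neighborhood of $x_j$ inside $\itr(\mcl Q^j_2)$.
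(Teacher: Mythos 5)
Your overall strategy for the time-derivative bound is the same as the paper's (and is correct): at a differentiable point the PDE holds classically, so $\partial_t f_j(t_j,x_j)=\bxi^j(\nabla f_j(t_j,x_j))$, and the bound then follows once the gradient is known to lie in $\mcl Q^j_2$ with entries of norm at most $1$. Where you diverge from the paper is in how those gradient bounds are obtained. The paper simply cites the structural results from \cite{chen2022hamilton} (Theorem~2.2, which builds $(\mcl Q^j_2)^*$-monotonicity into the solution class, and Proposition~2.4 for the Lipschitz estimate), whereas you attempt a self-contained re-derivation via comparison with spatially shifted solutions.

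The re-derivation has a genuine gap in the comparison step. Your shifts, e.g.\ $h=(0,\ldots,-a,a,\ldots,0)$ for the monotonicity and $h=(0,\ldots,a,\ldots,0)$ (with $a$ in an interior coordinate) for the $L^\infty$ bound, lie in $(\mcl Q^j_2)^*$ but \emph{not} in $\mcl Q^j_2$. Hence the shifted function $(t,x)\mapsto f_j(t,x+\eps h)$ has natural domain $\itr(\mcl Q^j_2)-\eps h$, which neither contains nor is contained in $\itr(\mcl Q^j_2)$ (the cone $\mcl Q^j_2$ is not self-dual). The cone comparison principle without boundary conditions from \cite{chen2022hamiltonCones} compares two sub/supersolutions \emph{on the same cone} $\itr(\mcl Q^j_2)$; it does not directly apply to a pair whose domains merely overlap. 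Restricting to the intersection creates an artificial boundary inside $\itr(\mcl Q^j_2)$ where you have no a priori ordering, so ``initial pointwise orderings propagate'' does not follow. This is precisely why the paper does not attempt this route and instead invokes the fact that the construction of $f_j$ in \cite[Theorem~2.2]{chen2022hamilton} already places $f_j(t,\cdot)$ in a class of $(\mcl Q^j_2)^*$-nondecreasing functions, from which $\nabla f_j(t_j,x_j)\in((\mcl Q^j_2)^*)^*=\mcl Q^j_2$ follows by duality, and applies \cite[Proposition~2.4]{chen2022hamilton} for the quantitative bound. Your reduction of the $L^\infty$ bound to the Lipschitz estimate of $\psi^j$ in the $\ell^1$-dual norm is the right idea (and matches the spirit of Proposition~2.4), but it still rests on the same problematic shift-comparison. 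If you want a self-contained argument you would need either to only use shifts $h\in\mcl Q^j_2\cap(\mcl Q^j_2)^*$ (too weak; this gives $\nabla f_j\in\mcl Q^j_2+(\mcl Q^j_2)^*$, which is strictly larger than $\mcl Q^j_2$) or to approximate by solutions on nested shifted cones and pass to the limit, which requires additional stability estimates not set up here.
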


\begin{proof}
\rv{Recall that the construction of $f_j$ was carried out in~\cite[Theorem~2.2]{chen2022hamilton}. Consequently, $f_j$ belongs to the class described in~\cite[(2.1)]{chen2022hamilton}, which implies that $f_j(t,\cdot)$ is $(\mcl Q^j_2)^*$-nondecreasing (see Definition~\ref{d.dual_cone}) for each $t \in [0,\infty)$.}
Using this and Lemma~\ref{l.duality_cones}, we can deduce $\nabla f_j(t_j,x_j) \in \mcl Q^j_2$. Applying Lemma~\ref{l.proj_cones}, we get $\lf_j(\nabla f_j(t_j,x_j) ) \in \mcl Q_2$. Now, we turn to the bounds on derivatives.
On $\cL^j$, we consider the $\ell^1$- and the $\ell^\infty$-norms defined by
\begin{align*}
    |x|_{\ell^1} = 2^{-j}\sum_{k=1}^{2^j}|x_k|,\qquad |x|_{\ell^\infty} = \sup_{1\leq k\leq2^j}|x_k|
\end{align*}
where $x=(x_k)_{1\leq k\leq 2^j}\in\cL^j$ with each $x_k \in \S^\D$.
Applying~\cite[Proposition~2.4]{chen2022hamilton} with $\|\cdot\|_p$ therein replaced by $|\cdot|_{\ell^1}$, we get
\begin{align*}
    \Ll|\nabla f_j(t_j,x_j)\Rr|_{\ell^\infty} \leq C,\qquad \Ll|\partial_t f_j(t_j,x_j)\Rr|\leq\sup_{x\in \mcl Q^j_2,\, |x|_{\ell^\infty}\leq  C}\Ll|\bxi^j(x)\Rr|
\end{align*}
where $C= \sup_{x\in\mcl Q^j_2}\Ll|\nabla \psi^j(x)\Rr|_{\ell^\infty}$.
Using~\ref{i.A} and Lemma~\ref{l.derivatives}~\eqref{i.char_nabla_j}, we can see that $C\leq 1$. Using this and the expression of $\bxi^j$ in~\eqref{e.bxi^j=psi^j=}, we can get the first bound in~\eqref{e.l.bound_der_f_j}. By the definition of $\lf_j$ in~\eqref{e.pj,lj=}, we get the last bound in~\eqref{e.l.bound_der_f_j}. 
\end{proof}

\subsubsection{Extension lemma}

For each $j$, we need to consider PDEs posed on the whole space $[0,\infty)\times \cL^j$.
Hence, we first need to extend the initial condition $\psi^j$ to $\cL^j$.

\begin{lemma}[$C^1$-extension on cones]\label{l.ext}
    Suppose that $u:\C_2^j\to\R$ is continuously differentiable and that $\nabla u:\C_2^j\to \cL^j$ is bounded and Lipschitz. Then, there is a continuously differentiable function $\tilde u:\cL^j\to\R$ such that $\tilde u |_{\C_2^j} = u$ and that $\nabla\tilde u$ is bounded and Lipschitz.
\end{lemma}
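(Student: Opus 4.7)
The plan is to invoke the classical Whitney extension theorem for $C^{1,1}$ functions on finite-dimensional Euclidean spaces, reducing the problem to verifying the Whitney compatibility conditions for $u$ and $\nabla u$ on the closed convex cone $\C_2^j\subset\cL^j$. Recall that this theorem states: if $E$ is closed in a finite-dimensional Euclidean space $V$ and $u: E \to \R$, $g: E \to V$ are bounded functions satisfying both $|g(y)-g(x)| \le M|y-x|$ and $|u(y)-u(x)-\langle g(x), y-x\rangle_V|\le M|y-x|^2$ for all $x, y \in E$, then there exists $\tilde{u}\in C^{1,1}(V)$ with $\tilde{u}|_E = u$, $\nabla\tilde{u}|_E = g$, and with $\sup_V|\nabla\tilde{u}| + \mathrm{Lip}(\nabla\tilde{u})$ controlled by a $V$-dependent constant times $\sup_E|g| + M$.

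I would apply this with $E = \C_2^j$, $V = \cL^j$, and $g = \nabla u$. Boundedness and Lipschitzness of $g$ are given by hypothesis. The only nontrivial point is the first-order Taylor estimate, and here convexity of the cone does all the work: for any $x, y \in \C_2^j$, the segment $\{x + t(y-x) : t \in [0,1]\}$ lies in $\C_2^j$, which permits the identity
\[
u(y) - u(x) - \langle \nabla u(x), y-x\rangle_{\cL^j} = \int_0^1 \langle \nabla u(x + t(y-x)) - \nabla u(x), y-x\rangle_{\cL^j}\,\d t,
\]
whose right-hand side is bounded by $\tfrac{L}{2}|y-x|_{\cL^j}^2$ where $L$ is the Lipschitz constant of $\nabla u$ on $\C_2^j$. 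Whitney's theorem then yields the required $C^{1,1}$ extension $\tilde u: \cL^j \to \R$ with $\nabla\tilde u$ bounded and Lipschitz.

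The only real obstacle is the Taylor estimate above, which would fail for a general nonconvex closed set; convexity of $\C_2^j$ is precisely what makes it automatic. If one prefers an explicit construction over invoking Whitney's theorem as a black box, an alternative is to set $\tilde u(x) = u(\Pi(x)) + \langle \nabla u(\Pi(x)), x - \Pi(x)\rangle_{\cL^j}$, where $\Pi:\cL^j\to\C_2^j$ is the (well-defined and $1$-Lipschitz) orthogonal projection onto the closed convex cone. This automatically restricts to $u$ on $\C_2^j$ and is globally Lipschitz; verifying $C^1$-regularity at a point $x \notin\C_2^j$ requires a delicate cancellation argument exploiting the first-order optimality condition that $x - \Pi(x)$ lies in the polar of the tangent cone to $\C_2^j$ at $\Pi(x)$. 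This is where the bulk of the technical work lies if one pursues this more concrete route, and it is plausibly the source of the novelty alluded to by the author.
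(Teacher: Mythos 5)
There is a genuine gap, and it is precisely the obstacle the paper's own discussion flags. Your cited Whitney-type theorem explicitly requires $u$ itself to be bounded, but the lemma makes no such assumption: $\C_2^j$ is an unbounded cone and $u$ has only bounded \emph{derivative}, so $u$ typically grows linearly. Thus the black-box theorem you invoke does not apply, and you cannot replace it by the classical Whitney theorem either, since that one gives no control on $\sup|\nabla\tilde u|$ away from the set. Verifying the second-order Whitney condition via the segment integral is the easy part (and you do it correctly); the real difficulty, which your argument never touches, is to get a \emph{bounded} gradient for the extension on the far-away region where the Whitney cubes are large. The paper's proof handles this by exploiting the geometry of a cone through the origin: after excising a bounded neighborhood and a ``shell'' near the cone, one has $|x|\leq 4d(x,\C_2^j)$ (their estimate~\eqref{e.1<|x|<4d(x)}), which lets the linear growth of the local Taylor polynomials be canceled by the $d(x)^{-1}$ decay of $|\nabla\phi_k|$. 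Nothing in your proposal plays that role.

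Your ``alternative explicit construction'' $\tilde u(x)=u(\Pi(x))+\langle\nabla u(\Pi(x)),x-\Pi(x)\rangle$ does not rescue the argument. Differentiating (where $\Pi$ is differentiable, which is a.e.\ but not everywhere) gives a term of the schematic form $D\Pi(x)^{\!\top}D^2u(\Pi(x))\,(x-\Pi(x))$, whose norm is of order $\mathrm{Lip}(\nabla u)\cdot d(x,\C_2^j)$; this is unbounded. So this $\tilde u$ fails the conclusion of the lemma even where it is $C^1$, and the ``delicate cancellation'' you anticipate does not produce a bounded gradient. The novelty is not in the $C^1$-regularity of a projection formula; it is in controlling the extension at infinity for an unbounded $u$, which requires either a nontrivial variant of Whitney's theorem tailored to convex sets, or the paper's cone-geometry estimate. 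As written, the proposal establishes the Whitney compatibility conditions but leaves the central claim of the lemma — that the extension can be taken with \emph{bounded} gradient — unproved.
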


This lemma is a version of the Whitney extension theorem. To our best knowledge, there are no existing results on extending a possibly unbounded function on an unbounded domain but with bounded derivatives. For instance, the original version~\cite{whitney} gives an extension on unbounded domains but without bounds on derivatives; \cite[Theorem~2.3.6]{hormander2015analysis} gives an extension on a compact domain; and \cite[Theorem~3 in Section~2.2.1 of Chapter~VI]{stein1970singular} gives an extension of a bounded function on an unbounded domain.
We prove this lemma by modifying the arguments in the proof of \cite[Theorem~2.3.6]{hormander2015analysis} and exploiting the linear structure of a cone.

We mention that Lemma~\ref{l.ext} does not give a good bound on $\nabla \tilde u$ in terms of $j$. This causes some difficulty to us because we want to apply this lemma to extend $\psi^j$ (see~\eqref{e.bxi^j=psi^j=}) whose Lipschitz coefficient is bounded uniformly in $j$ but the lemma does not give us a control on that of $\tilde{\psi^j}$ in $j$. To overcome this, we will need Lemma~\ref{l.supp_cone} which states that everything interesting to us eventually happens on the cone.

\begin{proof}
\rv{Let $C > 0$ denote a constant depending only on the Lipschitz constants of $u$ and $\nabla u$, whose value may change from line to line.}
We denote the Euclidean distance between sets $A,B\subset\cL^j$ by $d(A,B)$. We write $d(x,y) = d(\{x\},\{y\})$, $d(x,A)=d(\{x\},A)$, and $d(x) = d(\{x\},\C_2^j)$. 

The support of a function $\phi$ on $\cL^j$ is denoted by $\mathsf{supp}\,\phi$ whose diameter is denoted by $\mathsf{diam}\,\phi$. Also, we write $d(\phi, *) =  d(\mathsf{supp}\,\phi,*)$ for $*$ being either a point or a set.

We need the existence of a partition of unity: there is a countable collection $(\phi_k)_{k\in \N}$ of continuously differentiable functions from $\cL^j$ to $[0,\infty)$ such that
\begin{enumerate}[start=1,label={\rm (p\arabic*)}]
    \item \label{i.phi_1} $\sum_{k\in\N}\phi_k=1$ on $\cL^j\setminus\C_2^j$;
    \item \label{i.phi_2} there  is $n\in\N$ such that no point in $\cL^j$ is the support of more than $n$ functions;
    \item \label{i.phi_3} $\mathsf{diam}\,\phi_k \leq 2 d(\phi_k,\C_2^j)$ for every $k$;
    \item \label{i.phi_4} there is $C_0>0$ such that $|\nabla\phi_k(x)|\leq C_0d(x)^{-1}$ for every $k$ and every $x\in\cL^j\setminus\C_2^j$.
\end{enumerate}
This is a restatement of \cite[Lemma~2.3.7]{hormander2015analysis}, which is a direct consequence of \cite[Theorem~1.4.10 and Example~1.4.8]{hormander2015analysis}. As an intermediate step in the proof of \cite[Theorem~2.3.6]{hormander2015analysis}, \cite[Lemma~2.3.7]{hormander2015analysis} is stated for a compact set in place of $\C_2^j$. However, examining the conditions in \cite[Theorem~1.4.10 and Example~1.4.8]{hormander2015analysis}, one can see that compactness is superfluous.

Let us denote the inner product (see~\eqref{e.<x,y>_cL^j=}) in $\cL^j$ simply by a dot (this is a slight abuse of notation, since elsewhere the dot product is the entry-wise inner product without the normalizing factor $2^{-j}$ in~\eqref{e.<x,y>_cL^j=}).
We set
\begin{align}\label{e.u(x,y)=u(y)+(x-y)nablau(y)}
    u(x,y) = u(y)+ (x-y)\cdot \nabla u(y),\quad\forall x \in \cL^j,\ y\in \C_2^j.
\end{align}
Hence, $u(\cdot,y)$ is the first-order Taylor polynomial of $u$ expanded at $y$. Let us record some basic estimates of $u(\cdot,\cdot)$. Since the \rv{boundedness of $\nabla u$} implies $|u(y)|\leq C(1+|y|)$, together with the triangle inequality, we \rv{can obtain from~\eqref{e.u(x,y)=u(y)+(x-y)nablau(y)} that}
\begin{align}\label{e.|u|}
    |u(x,y)|\leq C(1+|x|+|x-y|),\quad\forall x\in\cL^j,\  y\in\C_2^j.
\end{align} Boundedness of $\nabla u$ immediate gives
\begin{align}\label{e.|u-u|<1}
    |u(x,y)-u(x,y')| \leq C(|x-y|+|x-y'|),\quad \forall x\in \cL^j,\ y,y'\in\C_2^j.
\end{align}
Using
\begin{align*}
    u(x,y)-u(x,y') = u(y)-u(y')-(y-y')\cdot\nabla u(y')+(x-y)\cdot(\nabla u(y)-\nabla u(y')),
\end{align*}
the Lipschitzness of $\nabla u$, and the triangle inequality, we can obtain
\begin{align}\label{e.|u-u|<}
    |u(x,y)-u(x,y')|\leq C(|x-y|^2+|x-y'|^2),\quad \forall x\in \cL^j,\ y,y'\in\C_2^j.
\end{align}

For each $k$, we fix $y_k\in\C_2^j$ to satisfy $d(\phi_k,y_k)=d(\phi_k,\C_2^j)$. For $x \in \mathsf{supp}\,\phi_k$, using $|x-y_k|\leq d(\phi_k,\C_2^j)+\mathsf{diam}\,\phi_k$ and~\ref{i.phi_3}, we have $|x-y_k|\leq 3 d(\phi_k,\C_2^j)$ and thus
\begin{align}\label{e.|x-y|<3}
    |x-y_k|\leq 3|x-y|,\quad\forall x\in \mathsf{supp}\,\phi_k,\ y\in \C_2^j.
\end{align}

Define the following sets:
\begin{gather*}
    X_0=\{x\in\cL^j:|x|\leq 1\},\qquad X_1 = \{x\in\cL^j:d(x)\leq 4^{-1}|x|\},\qquad X= X_0\cup X_1;
    \\
    K_0 = \{k:d(\phi_k,0)\leq 1\},\qquad K_1 = \{k:\mathsf{diam}\,\phi_k\leq d(\phi_k,0)\},\qquad K=K_0\cup K_1.
\end{gather*}
A simple observation from the definitions of $X_1$ and $X$ is 
\begin{align}\label{e.1<|x|<4d(x)}
    x\not\in X\quad \Longrightarrow \quad 1\leq |x|\leq 4d(x).
\end{align}
For $i\in\{0,1\}$, we want to show 
\begin{align}\label{e.if_then}
    x\in X_i \cap  \mathsf{supp}\,\phi_k\quad\implies \quad k\in K_i.
\end{align}
For $i=0$, this follows from $d(\phi_k,0)\leq |x|$ if $x\in \mathsf{supp}\, \phi_k$. For $i=1$, we have
\begin{align*}
    \mathsf{diam}\,\phi_k \stackrel{\ref{i.phi_3}}{\leq} 2d(\phi_k,\C_2^j)\stackrel{x\in \supp\phi_k}{\leq} 2d(x)\stackrel{x\in X_1}{\leq} \frac{1}{2}|x|\stackrel{x\in \supp\phi_k}{\leq} \frac{1}{2}(\mathsf{diam}\,\phi_k+d(\phi_k,0))
\end{align*}
which implies $k\in K_1$. Hence, we have verified~\eqref{e.if_then} for $i\in\{0,1\}$ and consequently
\begin{gather}\label{e.sumphi=1}
    \sum_{k\in K}\phi_k(x) \stackrel{\ref{i.phi_1}\eqref{e.if_then}}{=}1,\quad\forall x \in X;
    \\
    \label{e.sumgradphi=0}
    \sum_{k\in K}\nabla\phi_k(x) \stackrel{\eqref{e.sumphi=1}}{=}0,\quad\forall x \in X.
\end{gather}
Also, note that the interior of $X$ contains $\C_2^j$.

Then, we set
\begin{align}\label{e.v(x)=}
    v(x) = 
    \begin{cases}
    \sum_{k\in K}\phi_k(x)u(x,y_k),\quad & x\not\in\C_2^j,
    \\
    u(x),\quad & x\in \C_2^j.
    \end{cases}
\end{align}
We want to show that $v$ is the desired extension. Clearly, $v$ is continuously differentiable in $\cL^j\setminus\C_2^j$ and the interior of $\C_2^j$. 

First, we show that $v$ is differentiable on $\partial\C_2^j$ and $\nabla v = \nabla u$ on $\partial\C_2^j$. Let $y\in \partial \C_2^j$. Since $v(x)-v(y)-(x-y)\cdot \nabla u(y)=v(x)-u(x,y)$ and the interior of $X$ contains $\C_2^j$, it suffices to show that
\begin{align}\label{e.|v-u|<}
    |v(x) - u(x,y)|\leq C|x-y|^2,\quad\forall x\in  X.
\end{align}
If $x\in \C_2^j$, then this holds due to the regularity of $u$. Now, suppose $x \in \cL^j\setminus\C_2^j$. Then, we have
\begin{align*}
    |v(x) - u(x,y)|\stackrel{\eqref{e.sumphi=1}}{\leq}\sum_k\phi_k(x)|u(x,y_k)-u(x,y)|\stackrel{\eqref{e.|u-u|<}\eqref{e.|x-y|<3}}{\leq} \sum_k\phi_k(x)C|x-y|^2
    \\
    \stackrel{\eqref{e.sumphi=1}}{=} C|x-y|^2.
\end{align*}
Hence, we have verified~\eqref{e.|v-u|<} and thus $v$ is differentiable  on $\partial\C_2^j$ with $\nabla v=\nabla u$ on $\partial \C_2^j$. In particular, $v$ is differentiable everywhere.

Next, we show that $\nabla v$ is Lipschitz.
In view of the definition of $v$ in~\eqref{e.v(x)=} and~\ref{i.phi_2}, $\nabla v$ is Lipschitz on $\C_2^j$ and outside of $\C_2^j$. To have the full Lipschitzness on $\cL^j$, it suffices to show $|\nabla v(x) -\nabla v(y)|\leq C|x-y|$ for $x\in X\setminus \C_2^j$ and $y\in \partial \C_2^j$ (recall that the interior of $X$ contains $\C_2^j$). 
Firstly, notice
\begin{align}\label{e.nabla_v(x)=}
    \nabla v(x) \stackrel{\eqref{e.u(x,y)=u(y)+(x-y)nablau(y)}\eqref{e.v(x)=}}{=} \sum_{k\in K}\phi_k(x)\nabla u(y_k) + \sum_{k\in K}\nabla\phi_k(x) u(x,y_k),\quad\forall x \in \cL^j\setminus\C_2^j.
\end{align}
Then, due to~\eqref{e.nabla_v(x)=}, $\nabla v(y) =\nabla u(y)$, and~\eqref{e.sumphi=1}, we have
\begin{align*}
    |\nabla v(x)-\nabla v(y)|\leq \sum_{k\in K}\phi_k(x)|\nabla u(y_k)-\nabla u(y)|+  \underbrace{\Ll|\sum_{k\in K}\nabla\phi_k(x) u(x,y_k)\Rr|}_\circledast,\qquad \forall x\in X\setminus\C_2^j.
\end{align*}
Using the Lipschitzness of $\nabla u$, we have
\begin{align*}
    \sum_{k\in K}\phi_k(x)|\nabla u(y_k)-\nabla u(y)|\leq \sum_{k\in K}\phi_k(x)C(|x-y_k|+|x-y|)\stackrel{\eqref{e.|x-y|<3}\eqref{e.sumphi=1}}{\leq} C|x-y|.
\end{align*}
Since $\C_2^j$ is closed and convex, there is $y^x\in\C_2^j$ such that $d(x)=|x-y^x|$. 
Then, we get
\begin{align*}
    \circledast \stackrel{\eqref{e.sumgradphi=0}}{=}\Ll|\sum_{k\in K}\nabla\phi_k(x) (u(x,y_k)-u(x,y^x))\Rr|  \leq \sum_{k\in K}|\nabla\phi_k(x)||u(x,y_k)-u(x,y^x)|
    \\
    \stackrel{\eqref{e.|u-u|<}\eqref{e.|x-y|<3}\ref{i.phi_2}\ref{i.phi_4}}{\leq} Cn d(x)^{-1}|x-y^x|^2 \stackrel{d(x)=|x-y^x|}{=} Cn d(x) \stackrel{y\in \partial \C_2^j}{\leq} Cn|x-y|.
\end{align*}
Therefore, we obtain $|\nabla v(x)-\nabla v(y)|\leq C|x-y|$, which as we explained previously implies the full Lipschitzness of $v$ on $\cL^j$.

Lastly, we show that $\nabla v$ is bounded. We only need to verify this on $\cL^j\setminus\C_2^j$. In view of~\eqref{e.nabla_v(x)=}, since $\nabla u$ is bounded,
we only need to bound $\circledast$ for all $x\in \cL^j\setminus\C_2^j$.
On $X\setminus \C_2^j$, we have
\begin{align*}
    \circledast &\stackrel{\eqref{e.sumgradphi=0}}{=}\Ll|\sum_{k\in K}\nabla\phi_k(x) (u(x,y_k)-u(x,y^x))\Rr| 
    \leq \sum_{k\in K}|\nabla\phi_k(x)||u(x,y_k)-u(x,y^x)|
    \\
    & \stackrel{\eqref{e.|u-u|<1}\eqref{e.|x-y|<3}}{\leq} \sum_{k\in K}|\nabla\phi_k(x)|C|x-y^x|
    \stackrel{\ref{i.phi_2}\ref{i.phi_4}}{\leq} Cn d(x)^{-1}|x-y^x| = Cn .
\end{align*}
On $\cL^j\setminus X$, we have
\begin{align*}
    \circledast &\leq \sum_{k\in K}|\nabla \phi_k(x)||u(x,y_k)| 
    \stackrel{\eqref{e.|u|}}{\leq} \sum_{k\in K}|\nabla \phi_k(x)|C\Ll(1+|x|+|x-y_k|\Rr)
    \\
    &\stackrel{\eqref{e.|x-y|<3}}{\leq} \sum_{k\in K}|\nabla \phi_k(x)|C\Ll(1+|x|+|x-y^x|\Rr)
    \stackrel{\eqref{e.1<|x|<4d(x)}}{\leq} \sum_{k\in K}|\nabla \phi_k(x)|C d(x) \stackrel{\ref{i.phi_2}\ref{i.phi_4}}{\leq} Cn.
\end{align*}
Therefore, $\nabla v$ is bounded. Taking $\tilde u=v$ completes the proof.
\end{proof}

\subsubsection{First-order approximation PDEs}

Recall $\H$ defined in~\eqref{e.H=}. For each $j\in\N$, let $\H^j:\cL^j\to\R$ be the $j$-projection of $\H$ (see~\eqref{e.j-projection}). Recall from~\ref{i.d2} that $\H$ is Lipschitz. So, due to Lemma~\ref{l.basics_(j)}~\eqref{i.isometric}, $\H^j$ is Lipschitz with the Lipschitz coefficient bounded uniformly in $j$. 
For technical reasons, we want to mollify $\H^j$ to make it smooth. For this purpose, we let $\phi_{j,1}:\cL^j\to [0,\infty)$ be a smooth bump function with compact support and $\int \phi_{j,1}=1$. For each $\eta>0$, we define $\phi_{j,\eta} = \eta^{d_j}\phi_{j,1}(\eta^{-1}\,\cdot\,)$ where $d_j$ is the dimension of $\cL^j$. We define
\begin{align}\label{e.H^j_eta=}
    \H^j_\eta = \H^j * \phi_{j,\eta}
\end{align}
where $*$ is the convolution operator. 
We set $\H^j_0= \H^j$.
It is clear that
\begin{align}\label{e.bound_on_nabla_F_j}
    \sup_{j\in\N,\, \eta >0} \Ll|\nabla\H^j_\eta \Rr|_{\cL^j} \leq \|\H\|_\mathrm{Lip}.
\end{align}
This bound also allows us to see that, for each $j\in\N$,
\begin{align}\label{e.lim|F_j-H^j|=0}
    \lim_{\eta\to 0} \Ll\|\H^j_\eta - \H^j\Rr\|_\infty =0
\end{align}
where $\|\cdot\|_\infty$ is the standard sup-norm.

Let $\psi^j:\C_2^j \to \R$ be the $j$-projection of $\psi$ (introduced in~\eqref{e.bxi^j=psi^j=}), let $\tilde{\psi^j}:\cL^j\to\R$ be the extension of $\psi^j$ given by Lemma~\ref{l.ext} (which is applicable due to~\ref{i.A} and Lemma~\ref{l.derivatives}~\eqref{i.char_nabla_j}), and for each $\eta\geq0$, let
\begin{align}\label{e.f_j,eta=}
    \text{$f_{j,\eta}$ be the unique viscosity solution of $\HJ\Ll(\cL^j,\H^j_\eta; \tilde{\psi^j}\Rr)$}.
\end{align}
Since $\cL^j$ is isometric to a Euclidean space, the well-posedness of $\HJ\Ll(\cL^j,\H^j_\eta; \tilde{\psi^j}\Rr)$ is standard and can be seen, for instance, in \cite{crandall1992user}.
Recall $f_j$ introduced in~\eqref{e.f_j=}.

\begin{lemma}\label{l.f_j,0_cvg_f_j}
For each $j \in \N$, $f_{j,0}$ as in~\eqref{e.f_j,eta=} is equal to $f_j$ as in~\eqref{e.f_j=} everywhere on $[0,\infty)\times \mcl Q^j_2$.
\end{lemma}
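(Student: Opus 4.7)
The proof strategy is to apply the uniqueness theorem \cite[Theorem~2.2]{chen2022hamilton} for $\HJ(\itr(\mcl Q^j_2), \bxi^j; \psi^j)$ to two candidates: $f_j$ itself (which is the solution by definition) and $f_{j,0}$ restricted to $[0,\infty) \times \mcl Q^j_2$. If both are verified to lie in the uniqueness class for this cone equation, they must coincide.

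Two of the three necessary ingredients are essentially free. First, the initial data match: $\tilde{\psi^j}(x) = \psi^j(x)$ for $x \in \mcl Q^j_2$ by construction in Lemma~\ref{l.ext}. Second, the two nonlinearities agree on the ``good region'' $R := \{y \in \mcl Q^j_2 : |\lf_j y|_{L^\infty} \leq 1\}$; indeed, by \ref{i.d1}, $\H$ equals $\int_0^1 \bar\xi$ on $\mcl Q_2$, and by \ref{i.c1}, $\bar\xi = \xi$ on $\{|a_{ij}| \leq 1,\,\forall i,j\}$, which together give $\H^j = \bxi^j$ on $R$. Moreover, by Lemma~\ref{l.bound_der_f_j} the gradient values of $f_j$ at differentiable points in the interior of the cone land exactly in $R$. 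Lipschitz control of $f_{j,0}$ on bounded sets also comes for free from the standard Lipschitz estimate for finite-dimensional HJ equations with Lipschitz data and Lipschitz nonlinearity (Lemma~\ref{l.ext} and~\eqref{e.bound_on_nabla_F_j}).

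The core task is then to verify the PDE for $f_{j,0}|_{\mcl Q^j_2}$ in the viscosity sense with nonlinearity $\bxi^j$. Given a smooth test function $\phi$ with $f_{j,0} - \phi$ having a local extremum at $(t, x) \in (0, \infty) \times \itr(\mcl Q^j_2)$, the fact that $f_{j,0}$ is a viscosity solution on $\cL^j$ delivers the corresponding inequality involving $\H^j(\nabla \phi(t, x))$; one needs to replace $\H^j$ by $\bxi^j$. The plan is to use the $(\mcl Q^j_2)^*$-monotonicity of $\H^j$ (inherited from~\ref{i.d2}) together with the gradient bound from~\ref{i.A} to perturb $\phi$ so that its gradient at $(t, x)$ lies in $R$, where $\H^j = \bxi^j$, without destroying the local extremum modulo vanishing errors.

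The hardest part will be executing this test-function adjustment rigorously in both the subsolution and supersolution directions, because the extension $\tilde{\psi^j}$ furnished by Lemma~\ref{l.ext} has no a priori controlled gradient outside the cone, so nothing constrains $\nabla \phi(t, x)$ to start with. If the perturbative route proves unwieldy, a robust alternative I would pursue is a doubling-of-variables comparison directly between $f_{j,0}$ and $f_j$ on $[0, \infty) \times \mcl Q^j_2$: combining the $\H^j$-viscosity property of $f_{j,0}$ with the $\bxi^j$-viscosity property of $f_j$ in the standard Ishii-style quadratic penalty argument, and using Lemma~\ref{l.bound_der_f_j} together with the $(\mcl Q^j_2)^*$-monotonicity of $f_j$ to keep the dual variable $\tfrac{x_\epsilon - y_\epsilon}{\epsilon}$ inside $R$ (possibly after a preliminary translation of $y_\epsilon$ into the interior exploiting monotonicity), so that $\H^j$ and $\bxi^j$ coincide on the relevant gradient values; sending the doubling parameter to zero then forces pointwise equality on $\mcl Q^j_2$.
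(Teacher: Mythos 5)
The paper's proof is much shorter and rests on a fact you appear not to have in hand: in \cite{chen2022hamilton}, $f_j$ is \emph{constructed} as the viscosity solution of $\HJ(\C^j_2,\H^j;\psi^j)$, and only afterwards verified to also solve $\HJ(\itr(\C^j_2),\bxi^j;\psi^j)$. Once you know this, $f_j$ and $f_{j,0}|_{[0,\infty)\times\itr(\C^j_2)}$ both solve the \emph{same} cone equation $\HJ(\itr(\C^j_2),\H^j;\psi^j)$ with the same initial data (since $\tilde{\psi^j}=\psi^j$ on $\C^j_2$), and the comparison principle from~\cite[Proposition~3.1]{chen2022hamiltonCones} gives equality directly, with the boundary handled by continuity. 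No replacement of $\H^j$ by $\bxi^j$ (or vice versa) is ever needed.

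Your primary plan runs into exactly the wall you identify, and it is a genuine one, not merely ``unwieldy'': to verify the $\bxi^j$ viscosity property for $f_{j,0}$, you would need $\nabla\phi(t,x)$ to land in the region $R=\{|\lf_j\cdot|_{L^\infty}\leq1\}$, but the sub/super-differentials of $f_{j,0}$ at interior points of the cone are only controlled by the Lipschitz constant of $\tilde{\psi^j}$ in the ambient $\cL^j$-norm; Lemma~\ref{l.ext} gives no $\ell^\infty$-type control on $\nabla\tilde{\psi^j}$ off the cone, and the monotonicity of $\H^j$ lets you move $\nabla\phi$ only in the $(\mcl Q^j_2)^*$-direction, which does not suffice to force it into $R$. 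So this route is blocked, not just inconvenient. Your doubling-of-variables fallback is closer to something that could be made to work, since the dual variable inherits the $\ell^\infty$-bound from $f_j$'s Lipschitz estimate (Lemma~\ref{l.bound_der_f_j}), which is what lets $\H^j=\bxi^j$ on the relevant set; but this effectively re-derives the comparison principle already available for the $\H^j$ equation. The cleaner move, and the one the paper takes, is to keep the nonlinearity $\H^j$ on both sides: show $f_j$ solves the $\H^j$-equation on the cone (by recalling its construction), restrict $f_{j,0}$ to the cone, and invoke the existing cone comparison result. That avoids the $\H^j\leftrightarrow\bxi^j$ conversion entirely.
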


\begin{proof}
In the statement of \cite[Theorem~4.8~(1)]{chen2022hamilton}, $f_j$ is taken as the unique solution of $\HJ\Ll(\itr(\C^j_2),\bxi^j;\psi^j\Rr)$. But examining the proof right after the statement, we can see that $f_j$ is initially taken to solve $\HJ\Ll(\C^j_2,\H^j;\psi^j\Rr)$ (then verified to solve the aforementioned equation) and thus also solves $\HJ\Ll(\itr(\C^j_2),\H^j;\psi^j\Rr)$, which by the comparison principle~\cite[Proposition~3.1]{chen2022hamiltonCones} coincides with $f_{j,0}|_{[0,\infty)\times \itr(\C^j_2)}$ here since $f_{j,0}|_{[0,\infty)\times \itr(\C^j_2)}$ and $f_j$ share the same initial condition. The conclusion follows by continuity. 
\end{proof}

\begin{lemma}\label{l.f_j,eta_cvg_f_j,0}
For each $j\in\N$ and $\eta\geq 0$, let $f_{j,\eta}$ be given as in~\eqref{e.f_j,eta=}. Then, $(f_{j,\eta})_{\eta>0}$ converges to $f_{j,0}$ in the local uniform topology on $[0,\infty)\times \cL^j$ as $\eta\to0$.
\end{lemma}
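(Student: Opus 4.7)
The plan is to use the standard stability of viscosity solutions under uniform perturbation of the Hamiltonian, exploiting the fact that $\H^j_\eta$ depends only on the gradient (it has no explicit $u$- or $(t,x)$-dependence). The key reduction is a linear-in-$t$ translation, which is made possible by this time-independence.

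First, I would set $\eps_\eta := \|\H^j_\eta - \H^j\|_\infty$, which tends to zero as $\eta\to 0$ by~\eqref{e.lim|F_j-H^j|=0}. Since $f_{j,\eta}$ is a viscosity solution of $\partial_t u - \H^j_\eta(\nabla u) = 0$ and $|\H^j_\eta(p) - \H^j(p)|\leq \eps_\eta$ for every $p\in\cL^j$, testing against any smooth $\phi$ at a contact point shows that $f_{j,\eta}$ is a viscosity subsolution of $\partial_t u - \H^j(\nabla u) = \eps_\eta$ and a viscosity supersolution of $\partial_t u - \H^j(\nabla u) = -\eps_\eta$. Because the Hamiltonian $\H^j$ has no explicit $t$-dependence, the linear shifts $g^{\pm}(t,x) := f_{j,\eta}(t,x) \pm \eps_\eta t$ are then exact viscosity sub-/supersolutions of $\HJ(\cL^j, \H^j; \tilde{\psi^j})$, both with initial datum $\tilde{\psi^j}$ at $t=0$.

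Next, I would apply the comparison principle for $\HJ(\cL^j, \H^j; \tilde{\psi^j})$ on the Euclidean space $\cL^j$, standard for Lipschitz sub- and supersolutions of a first-order Hamilton--Jacobi equation whose Hamiltonian is Lipschitz in $p$ (cf.~\cite{crandall1992user}). Comparing $g^{-}$ against $f_{j,0}$ and $f_{j,0}$ against $g^{+}$ yields
\begin{align*}
f_{j,\eta}(t,x) - \eps_\eta t \ \leq \ f_{j,0}(t,x) \ \leq \ f_{j,\eta}(t,x) + \eps_\eta t,
\end{align*}
so $|f_{j,0}(t,x) - f_{j,\eta}(t,x)|\leq \eps_\eta t$ uniformly in $x\in\cL^j$. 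Since on every bounded metric ball in $[0,\infty)\times \cL^j$ the time coordinate is bounded, sending $\eta\to 0$ gives the required local uniform convergence.

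The main obstacle is verifying the hypotheses of the comparison principle on the unbounded space $\cL^j$: one needs $f_{j,\eta}$, $f_{j,0}$, and hence $g^\pm$ to be Lipschitz with bounds that make the standard doubling-of-variables argument work despite the initial datum $\tilde{\psi^j}$ being only Lipschitz (not bounded). This should follow from the fact that $\tilde{\psi^j}$ has a bounded Lipschitz derivative by Lemma~\ref{l.ext}, combined with the uniform gradient bound~\eqref{e.bound_on_nabla_F_j} on $\H^j_\eta$, which together force each $f_{j,\eta}$ to be globally Lipschitz in $(t,x)$ with a bound independent of $\eta$; at that point, the classical comparison principle for Hamilton--Jacobi equations on $\mathbb{R}^n$ with gradient-dependent Hamiltonian applies directly.
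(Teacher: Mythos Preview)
Your argument is correct and takes a genuinely different route from the paper. The paper's proof is indirect: it first observes that the family $(f_{j,\eta})_{\eta\in(0,1)}$ is uniformly Lipschitz (from the Lipschitzness of $\tilde{\psi^j}$ and the uniform local boundedness of $\H^j_\eta$), then invokes Arzel\`a--Ascoli to extract locally uniform subsequential limits, and finally uses a stability result for viscosity solutions (\cite[Theorem~5.2.5]{cannarsa2004semiconcave}) together with~\eqref{e.lim|F_j-H^j|=0} to identify every such limit with $f_{j,0}$. Your approach bypasses compactness entirely: the linear-in-$t$ shift reduces the perturbed equation to the unperturbed one, and a single application of the comparison principle yields the explicit quantitative bound $|f_{j,\eta}-f_{j,0}|\leq \eps_\eta t$. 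This is more informative (it gives a rate) and conceptually cleaner, at the cost of requiring the comparison principle on all of $\cL^j$ for unbounded Lipschitz sub/supersolutions; as you note, this is standard once the uniform Lipschitz bound is in hand, for instance via finite speed of propagation for a Lipschitz Hamiltonian depending only on the gradient. The paper's route, by contrast, only needs the comparison principle implicitly through uniqueness of $f_{j,0}$, and trades the explicit estimate for a softer compactness argument.
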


\begin{proof}
Since $\tilde {\psi^j}$ is Lipschitz and each of $(\H^j_\eta)_{\eta\in(0,1)}$ is locally bounded uniformly in $\eta$, the functions in $(f_{j,\eta})_{\eta\in(0,1)}$ are Lipschitz uniformly in $\eta$. Then, we can use \cite[Theorem~5.2.5]{cannarsa2004semiconcave} together with~\eqref{e.lim|F_j-H^j|=0} and the Arzelà--Ascoli theorem to deduce the convergence.
\end{proof}

\subsubsection{Viscous approximations and the adjoint}

This part follows Evans' work~\cite{evans2014envelopes} and uses techniques that have already appeared in~\cite{evans2010adjoint,tran2011adjoint,cagnetti2011aubry}. The main difference is that we do not send $\eps\to 0$ at this stage.
We need approximations of $f_{j,\eta}$ as in~\eqref{e.f_j,eta=} by solutions of a viscous equation.
For $j\in\N$, $\eta>0$, and $\eps>0$, let $f_{j,\eta,\eps}:[0,\infty)\times \cL^j\to\R$ be the unique viscosity solution of 
\begin{align}\label{e.f_j,eta,eps=}
    \begin{cases}
    \partial_t f_{j,\eta,\eps} -\H^j_\eta\Ll(\nabla f_{j,\eta,\eps}\Rr) =\eps \Delta f_{j,\eta,\eps} \qquad &\text{on $(0,\infty)\times \cL^j$},
    \\
    f_{j,\eta,\eps}(0,\cdot) =\tilde{\psi^j} \qquad &\text{on $\cL^j$}.
    \end{cases}
\end{align}

\begin{lemma}\label{l.f_j,eta,eps_to_f_j,eta}
For each $j\in\N$ and $\eta>0$, $(f_{j,\eta,\eps})_{\eps >0}$ as in~\eqref{e.f_j,eta,eps=} converges to $f_{j,\eta}$ as in~\eqref{e.f_j,eta=} in the local uniform topology on $[0,\infty)\times \cL^j$ as $\eps\to0$.
\end{lemma}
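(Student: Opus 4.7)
The plan is to run the standard vanishing-viscosity argument. Since $\tilde{\psi^j}$ is Lipschitz by Lemma~\ref{l.ext} and $\H^j_\eta$ is smooth with $|\nabla\H^j_\eta|\le \|\H\|_{\mathrm{Lip}}$ by~\eqref{e.bound_on_nabla_F_j}, the semilinear parabolic equation~\eqref{e.f_j,eta,eps=} admits, for each $\eps>0$, a unique classical solution $f_{j,\eta,\eps}$, which I would first establish via the standard fixed-point construction in a space of spatially Lipschitz functions on $[0,T]\times \cL^j$.

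First I would derive $\eps$-independent Lipschitz bounds. For the spatial Lipschitz bound, differentiate~\eqref{e.f_j,eta,eps=} in a direction $\bfe\in\cL^j$ to see that $u:=\bfe\cdot \nabla f_{j,\eta,\eps}$ satisfies a linear parabolic equation of the form $\partial_t u-\nabla\H^j_\eta(\nabla f_{j,\eta,\eps})\cdot\nabla u=\eps\Delta u$, whose maximum principle yields $\|u(t,\cdot)\|_\infty\le\|\bfe\cdot\nabla\tilde{\psi^j}\|_\infty$; combined with the Lipschitzness of $\tilde{\psi^j}$ from Lemma~\ref{l.ext}, this gives $\|\nabla f_{j,\eta,\eps}(t,\cdot)\|_\infty \le L_j$ uniformly in $\eps$. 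The time Lipschitz bound $|\partial_t f_{j,\eta,\eps}|\le L_j\|\H\|_{\mathrm{Lip}}+\|\H^j_\eta(0)\|$ then follows directly from the equation.

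With $\eps$-uniform Lipschitz bounds, the Arzelà--Ascoli theorem yields a locally uniform subsequential limit $f_\star$ on $[0,\infty)\times\cL^j$. By the classical stability theorem for viscosity solutions (e.g. the perturbed test function/half-relaxed limits argument as in~\cite{crandall1992user}), since $\eps\Delta f_{j,\eta,\eps}\to 0$ in the viscosity sense and $\H^j_\eta$ is continuous, $f_\star$ is a viscosity solution of $\partial_t f-\H^j_\eta(\nabla f)=0$ on $(0,\infty)\times \cL^j$ with $f_\star(0,\cdot)=\tilde{\psi^j}$. Uniqueness of Lipschitz viscosity solutions on $\cL^j$ for this first-order Hamilton--Jacobi problem then forces $f_\star = f_{j,\eta}$, and since every subsequence has this same limit the whole family $(f_{j,\eta,\eps})_{\eps>0}$ converges locally uniformly to $f_{j,\eta}$.

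Since $\cL^j$ is a finite-dimensional Euclidean space, no part of this is particularly delicate; the one mildly annoying step is the classical construction/regularity of $f_{j,\eta,\eps}$ itself, because $\tilde{\psi^j}$ need not be $C^2$, so one must either mollify $\tilde{\psi^j}$ further, prove that the Lipschitz bound transfers under such mollification, and then pass to the limit in that auxiliary mollification, or cite a Lipschitz parabolic well-posedness result directly. Everything else is standard vanishing-viscosity machinery.
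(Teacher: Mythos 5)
The paper's own proof is a single sentence citing~\cite[Theorem~2.2]{evans2010adjoint} (with the remark that the compact-support hypothesis there is inessential) and~\cite[Theorem~2.1]{tran2011adjoint}. Your argument is the standard vanishing-viscosity proof that underlies those references, so at the strategic level you and the paper are doing the same thing; yours is just written out rather than cited.

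There is one genuine gap in your sketch: the claim that the time-Lipschitz bound $|\partial_t f_{j,\eta,\eps}|\leq L_j\|\H\|_{\mathrm{Lip}}+|\H^j_\eta(0)|$ ``follows directly from the equation'' is not correct as stated, because the equation reads $\partial_t f_{j,\eta,\eps}=\H^j_\eta(\nabla f_{j,\eta,\eps})+\eps\Delta f_{j,\eta,\eps}$, and the viscous term $\eps\Delta f_{j,\eta,\eps}$ is not a priori bounded uniformly in $\eps$ (naively it could be of order $1$, not $o(1)$). The standard repair is the time-translation plus comparison argument: compare $f_{j,\eta,\eps}(\cdot+h,\cdot)$ with $f_{j,\eta,\eps}$ using the parabolic comparison principle to reduce the time modulus to $\sup_x|f_{j,\eta,\eps}(h,x)-\tilde{\psi^j}(x)|$, and then bound the latter by $Ch$ with barriers $\tilde{\psi^j}\pm Ct$; the constant $C$ can be taken uniform for $\eps\in(0,1]$ precisely because $\nabla\tilde{\psi^j}$ is Lipschitz by Lemma~\ref{l.ext}, so $\Delta\tilde{\psi^j}$ is essentially bounded. (If you prefer the barriers to be smooth, this is exactly where the auxiliary mollification of $\tilde{\psi^j}$ that you already flag at the end comes in.) A similar caveat applies to the differentiated-equation maximum-principle argument for the spatial gradient bound, which needs $f_{j,\eta,\eps}$ to be $C^3$ for $t>0$; parabolic smoothing gives that, but again mollifying $\tilde{\psi^j}$ is the cleanest route. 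With these repairs your proof is complete and, as noted, follows the same route as the results the paper cites.
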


\begin{proof}
This is a result of \cite[Theorem~2.2]{evans2010adjoint}. There, the initial condition was assumed to be compactly supported. But examining the proof, we can see that this is not needed here. Also see \cite[Theorem~2.1]{tran2011adjoint}.
\end{proof}

Then, we define the adjoint of the linearization of $f_{j,\eta,\eps}$. 
For $j\in\N$, $\eta>0$, $\eps>0$, $(\underline t, \underline x)\in(0,\infty)\times \cL^j$, and $r>0$, let $\sigma_{j,\eta,\eps,(\underline t,\underline x),r}:[0,\underline t]\times\cL^j\to\R$ be the unique viscosity solution of
\begin{align}\label{e.sigma_PDE}
    \begin{cases}
    -\partial_t \sigma_{j,\eta,\eps,(\underline t,\underline x),r} + \div\Ll( \sigma_{j,\eta,\eps,(\underline t,\underline x),r}\nabla \H^j_\eta(\nabla f_{j,\eta,\eps})\Rr) =\eps \Delta \sigma_{j,\eta,\eps,(\underline t,\underline x),r} \qquad &\text{on $[0,\underline t)\times \cL^j$},
    \\
    \sigma_{j,\eta,\eps,(\underline t,\underline x),r} (\underline t,\cdot)=\frac{1}{|B_j(\underline x,r)|}\mathds{1}_{B_j(\underline x,r)} \qquad &\text{on $ \cL^j$}.
    \end{cases}
\end{align}
Here, $\Delta$ and $\mathsf{div}$ are the normalized ones in~\eqref{e.Delta,div=} and 
\begin{align}\label{e.B_j(x,r)=}
    B_j(\underline x, r)=\Ll\{x' \in \cL^j:\: \Ll|x'-\underline{x}\Rr|_{\cL^j}\leq r\Rr\}.
\end{align}
We set
\begin{align}\label{e.C()}
    C_j(\underline t,\underline x,r) = [\underline t, \underline t+r]\times B_j(\underline x, r)
\end{align}
and define the measure $\bsigma^{\eta,\eps}_{j,(\underline t,\underline x),r} $ on $\cL^j$ equipped with the Borel sigma-algebra by
\begin{align}\label{e.sigma^eps_j,(t,x),r}
    \d\bsigma^{\eta,\eps}_{j,(\underline t,\underline x),r} (x) = \fint_{\underline t}^{\underline t+r}\sigma_{j,\eta,\eps,(s,\underline x),r}(0,x)\d s\, \d x
\end{align}
where $\fint_{\underline t}^{\underline t+r} = \frac{1}{r}\int_{\underline t}^{\underline t+r}$.

\begin{lemma}\label{l.f,sigma_estimates}
For each $j\in\N$ and each $\underline{t}>0$, there is a constant $C_{j,\underline{t}}$ locally bounded in $\underline{t}$ such that the following holds for sufficiently small $\eta>0$ and $\eps>0$ and uniformly in $\underline{x} \in \cL^j$ and $r>0$:
\begin{gather}\
\eps \int_0^{\underline{t}}\int_{\cL^j}\Ll(\Ll|\nabla^2 f_{j,\eta,\eps}\Rr|^2+ |\nabla \partial_t f_{j,\eta,\eps}|^2\Rr)\sigma_{j,\eta,\eps,(\underline{t},\underline{x}),r} \d x \d t \leq C_{j,\underline{t}}; \label{e.nabla^2f_bound}
    \\
    \sigma_{j,\eta,\eps,(\underline{t},\underline{x}),r}\geq 0;\qquad \int_{\cL^j} \sigma_{j,\eta,\eps,(\underline{t},\underline{x}),r}(t,x) \d x =1,\quad\forall t\in [0,a). \label{e.sigma^a>0}
\end{gather}
\end{lemma}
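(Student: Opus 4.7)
The plan is to follow Evans's adjoint method~\cite{evans2010adjoint,tran2011adjoint}. After the time-reversal $\tau = \underline{t}-t$, equation~\eqref{e.sigma_PDE} becomes a forward Fokker--Planck equation $\partial_\tau \tilde\sigma + \div(\tilde\sigma b) = \eps \Delta \tilde\sigma$ with smooth drift $b = \nabla \H^j_\eta(\nabla f_{j,\eta,\eps})$ of $L^\infty$-norm bounded by $\|\H\|_{\mathrm{Lip}}$ (via~\eqref{e.bound_on_nabla_F_j}), non-degenerate $\eps$-diffusion, and non-negative, bounded, compactly supported initial data $|B_j(\underline x,r)|^{-1}\mathds{1}_{B_j(\underline x,r)}$. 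When $\tilde{\psi^j}$ is smooth, $f_{j,\eta,\eps}$ is classical, the drift $b$ is smooth, and the parabolic maximum principle immediately yields $\sigma \geq 0$. For the mass conservation in~\eqref{e.sigma^a>0}, integrate the equation in $x$ over $\cL^j$: by the identities in~\eqref{e.integration_by_parts} applied with $g \equiv 1$, both $\int \div(\sigma b)\,dx$ and $\int \Delta \sigma\,dx$ vanish (the decay of $\sigma$ at infinity follows from Gaussian-type heat-kernel bounds for the non-degenerate Fokker--Planck equation with bounded drift), leaving $\frac{d}{dt}\int \sigma\,dx=0$. Since $\int\sigma(\underline t,\cdot)\,dx = 1$ by construction, $\int\sigma(t,\cdot)\,dx = 1$ throughout $[0,\underline t)$.

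The key tool for~\eqref{e.nabla^2f_bound} is the adjoint duality: if $\phi(t,x)$ solves the linearized equation $\partial_t\phi - b\cdot\nabla\phi - \eps\Delta\phi = R$, a direct computation using~\eqref{e.sigma_PDE} together with two integrations by parts yields
\begin{equation*}
\frac{d}{dt}\int_{\cL^j}\sigma\phi\,dx = \int_{\cL^j} \sigma R\,dx.
\end{equation*}
The plan is to apply this with two choices of $\phi$. First, differentiating~\eqref{e.f_j,eta,eps=} in $x_k$, multiplying by $\partial_k f_{j,\eta,\eps}$ and summing in $k$ shows that $\phi_1 := \tfrac{1}{2}|\nabla f_{j,\eta,\eps}|^2$ solves the linearized equation with source $R_1 = -\eps |\nabla^2 f_{j,\eta,\eps}|^2$. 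Integrating duality on $[0,\underline t]$ and discarding the non-negative terminal term gives
\begin{equation*}
\eps\int_0^{\underline t}\!\!\int_{\cL^j}\!|\nabla^2 f_{j,\eta,\eps}|^2\,\sigma\,dx\,dt \;\leq\; \int_{\cL^j}\phi_1(0)\,\sigma(0)\,dx \;\leq\; \tfrac{1}{2}\|\nabla \tilde{\psi^j}\|_\infty^2,
\end{equation*}
using mass conservation and the bound on $\nabla\tilde{\psi^j}$ from Lemma~\ref{l.ext}. Second, differentiating~\eqref{e.f_j,eta,eps=} in $t$ shows that $\partial_t f_{j,\eta,\eps}$ solves the homogeneous linearized equation; squaring and arguing as for $\phi_1$, one checks that $\phi_2 := \tfrac{1}{2}(\partial_t f_{j,\eta,\eps})^2$ has source $R_2 = -\eps |\nabla\partial_t f_{j,\eta,\eps}|^2$, and the same duality bounds the second term in~\eqref{e.nabla^2f_bound} by $\tfrac{1}{2}\|\partial_t f_{j,\eta,\eps}(0,\cdot)\|_\infty^2$.

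The hard part will be controlling $\|\partial_t f_{j,\eta,\eps}(0,\cdot)\|_\infty$ rigorously: from the equation, $\partial_t f_{j,\eta,\eps}(0,x) = \H^j_\eta(\nabla\tilde{\psi^j}(x)) + \eps\Delta\tilde{\psi^j}(x)$, and while the first term is bounded by continuity of $\H^j_\eta$ and boundedness of $\nabla\tilde{\psi^j}$, Lemma~\ref{l.ext} only guarantees that $\nabla\tilde{\psi^j}$ is Lipschitz, so $\Delta\tilde{\psi^j}$ exists only as an $L^\infty$ weak derivative. To justify the formal computations, I would first mollify $\tilde{\psi^j}$ to smooth approximations preserving the uniform bounds on $\|\nabla\tilde{\psi^j}\|_\infty$ and on the Lipschitz constant of $\nabla\tilde{\psi^j}$, carry out the energy identities for the corresponding classical solutions, and pass to the limit via stability of viscosity solutions and the uniform-in-mollification bounds. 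The resulting constant $C_{j,\underline t}$ depends on $j$ through $\|\nabla\tilde{\psi^j}\|_\infty$, the Lipschitz constant of $\nabla\tilde{\psi^j}$, and $\|\H\|_{\mathrm{Lip}}$, and is locally bounded in $\underline t$ since the $L^\infty$-norms of $\nabla f_{j,\eta,\eps}$ and $\partial_t f_{j,\eta,\eps}$ are propagated in time by the maximum principle applied to the linear equations they satisfy.
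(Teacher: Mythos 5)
Your proof is correct and follows essentially the same approach as the paper, which simply cites the relevant estimates from Evans (2010, 2014) and Tran (2011) and observes that the additional $\eta$-mollification leaves them unaffected thanks to \eqref{e.bound_on_nabla_F_j} and \eqref{e.lim|F_j-H^j|=0}; you carry out the underlying adjoint-duality computations in full, including the $\phi_1=\tfrac12|\nabla f|^2$ and $\phi_2=\tfrac12(\partial_t f)^2$ choices that produce the source terms $-\eps|\nabla^2 f|^2$ and $-\eps|\nabla\partial_t f|^2$, and you correctly flag the only genuine technical subtlety (the merely Lipschitz $\nabla\tilde{\psi^j}$) with a standard mollify-and-pass-to-the-limit fix. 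Two small cosmetic points: since you discard the non-negative terminal term, your constant is in fact $\underline t$-independent, so the closing remark about $L^\infty$-norms being propagated in time is unnecessary for the local boundedness in $\underline t$; and it is worth noting explicitly, as the paper does, that the uniformity in $\eta$ of \eqref{e.bound_on_nabla_F_j} is what makes $C_{j,\underline t}$ uniform for small $\eta$.
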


\begin{proof}
These are from \cite[(2.6) and (2.7)]{evans2014envelopes} (see also \cite[(1.3) and Theorem~2.1]{evans2010adjoint} and \cite[Lemma~2.3 and Lemma~2.5]{tran2011adjoint}). They are stated without the additional approximation in $\eta$ here. But they remain valid for small $\eta>0$ due to~\eqref{e.bound_on_nabla_F_j}, \eqref{e.lim|F_j-H^j|=0}, and the fact that the initial condition for $f_{j,\eta,\eps}$ as in~\eqref{e.f_j,eta,eps=} remains fixed.
\end{proof}

The second relation in~\eqref{e.sigma^a>0} implies that $\bsigma^{\eta,\eps}_{j,(\underline{t},\underline{x}),r}$ is a probability measure. The next result is the main ingredient to prove Theorems~\ref{t} and~\ref{t2}.

\begin{lemma}[Envelope representation for approximations]\label{l.visc_app}
For every $R>0$, the following holds for all $j\in\N$, $\eta>0$, $\eps>0$, $(\underline t,\underline x)\in (0,R)\times\C_2^j$, and $r\in(0,R)$:
\begin{align}\label{e.l.visc_app}
\begin{cases}
\begin{aligned}
    \fint_{C_j(\underline t ,\underline x ,r)} f_{j,\eta,\eps} -  \la x,\nabla f_{j,\eta,\eps}\ra_{\cL^j} & - t \partial_tf_{j,\eta,\eps}  
    \\
    &= \int_{\cL^j}  \tilde{\psi^j}(y) -  \la y,\nabla \tilde{\psi^j}(y) \ra_{\cL^j}\d \bsigma^{\eta,\eps}_{j,(\underline t ,\underline x ),r}(y) + o_\eps(1)
    \\
    \fint_{C_j(\underline t ,\underline x ,r)} \partial_t f_{j,\eta,\eps} &= \int_{\cL^j}\H^j(\nabla \tilde{\psi^j}(y))\d \bsigma^{\eta,\eps}_{j,(\underline t ,\underline x ),r}(y) +o_\eta(1)\\
    \fint_{C_j(\underline t ,\underline x ,r)} \nabla f_{j,\eta,\eps} &= \int_{\cL^j}\nabla \tilde{\psi^j}(y)\d \bsigma^{\eta,\eps}_{j,(\underline t ,\underline x ),r}(y)\end{aligned}
\end{cases}
\end{align}
where the error term $o_\eps(1)$ is independent of $\eta,r$; the error term $o_\eta(1)$ is independent of $\eps,r$; and they satisfy $\lim_{\eps\to0}o_\eps(1)=0$ and $\lim_{\eta\to0}o_\eta(1)=0$.
\end{lemma}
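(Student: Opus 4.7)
The plan is to exploit the adjoint structure between the viscous Hamilton--Jacobi equation~\eqref{e.f_j,eta,eps=} for $f := f_{j,\eta,\eps}$ and the backward equation~\eqref{e.sigma_PDE} for $\sigma := \sigma_{j,\eta,\eps,(\underline t, \underline x),r}$, following the strategy of~\cite{evans2010adjoint,evans2014envelopes}. First, I would observe that by differentiating the equation $\partial_t f = \H^j_\eta(\nabla f) + \eps\Delta f$ either spatially in a coordinate direction $e$ or temporally, each of the functions $v = \partial_e f$ and $w = \partial_t f$ solves the linearization
\begin{align*}
\partial_t u - \nabla\H^j_\eta(\nabla f)\cdot\nabla u - \eps\Delta u = 0.
\end{align*}
The equation~\eqref{e.sigma_PDE} is precisely the formal $L^2$-adjoint of this linearization, so a direct integration by parts using~\eqref{e.integration_by_parts} gives the conservation law $\frac{d}{dt}\int_{\cL^j} u(t,x)\sigma(t,x)\,dx = 0$ on $(0,\underline t)$, provided one can justify the manipulations (absence of boundary terms at infinity); the smoothness of $f$ on $(0,\infty)\times \cL^j$ from parabolic regularity, combined with~\eqref{e.nabla^2f_bound} and~\eqref{e.sigma^a>0}, will suffice for this, as in~\cite{evans2010adjoint,tran2011adjoint}.

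Applying this conservation identity with $u = \partial_e f$ and with $u = w = \partial_t f$, and evaluating at $t = \underline t$ (where $\sigma(\underline t,\cdot)$ is the normalized indicator of $B_j(\underline x,r)$) versus $t \to 0^+$ (where $f(0,\cdot) = \tilde{\psi^j}$ and hence $\partial_t f(0,\cdot) = \H^j_\eta(\nabla\tilde{\psi^j}) + \eps\Delta\tilde{\psi^j}$) will produce pointwise-in-$s$ versions of the second and third identities. Averaging over the terminal time $s\in[\underline t, \underline t+r]$, which is exactly how $\bsigma^{\eta,\eps}_{j,(\underline t,\underline x),r}$ is defined in~\eqref{e.sigma^eps_j,(t,x),r}, yields the third identity without error, and the second identity after replacing $\H^j_\eta(\nabla\tilde{\psi^j})$ by $\H^j(\nabla\tilde{\psi^j})$ at the cost of $\|\H^j_\eta - \H^j\|_\infty = o_\eta(1)$ (from~\eqref{e.lim|F_j-H^j|=0}) and after absorbing the residual $\eps\int\Delta\tilde{\psi^j}\,d\bsigma$, which is bounded uniformly in $\eta$ and $r$ thanks to the Lipschitz regularity of $\nabla\tilde{\psi^j}$ provided by Lemma~\ref{l.ext}.

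For the first and most delicate identity, I would differentiate
\begin{align*}
I(t) = \int_{\cL^j}\bigl[f(t,x) - \la x,\nabla f(t,x)\ra_{\cL^j} - t\,\partial_t f(t,x)\bigr]\sigma(t,x)\,dx
\end{align*}
in $t$, substitute the PDEs for $f$ and $\sigma$, and integrate by parts using~\eqref{e.integration_by_parts}. The adjoint relation is engineered so that every first-order (transport) contribution cancels pairwise; the surviving terms will all carry a prefactor of $\eps$ and involve second derivatives of $f$, schematically of the form $\eps\int\bigl(|\nabla^2 f|^2 + |\nabla \partial_t f|^2\bigr)^{1/2}\sigma^{1/2}\cdot \sigma^{1/2}$. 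By Cauchy--Schwarz and the weighted Hessian estimate~\eqref{e.nabla^2f_bound}, together with $\int\sigma\,dx = 1$, one obtains $|I(\underline t) - I(0)| = O(\sqrt{\eps}) = o_\eps(1)$, uniformly in $\eta$, $r$, $\underline x$, and $\underline t$ ranging over bounded sets. Evaluating $I(0)$ using $f(0,\cdot) = \tilde{\psi^j}$ and averaging in $s$ then produces the first identity.

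The main obstacle is the accurate bookkeeping in the computation of $I'(t)$: every first-order cancellation must be verified carefully via several integrations by parts, and one must confirm that the remaining terms genuinely carry the $\eps$ prefactor and are quantitatively controlled by~\eqref{e.nabla^2f_bound}. A secondary subtlety is ensuring that all error terms are independent of the parameters they are claimed to be independent of; this is why the regularization parameters $\eta$ (for $\H$) and $\eps$ (for viscosity) must be handled separately throughout, instead of being coupled, and is the reason the first identity carries $o_\eps(1)$ while the second carries $o_\eta(1)$.
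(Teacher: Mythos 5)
Your proposal is correct and follows essentially the same route as the paper's proof: the third and second identities come from the exact conservation of $\int(\nabla f)\sigma\,\d x$ and $\int(\partial_t f)\sigma\,\d x$ under the adjoint transport-diffusion flow, evaluated at $t=s$ versus $t\to 0^+$ and then averaged over $s$, and the first comes from showing the affine quantity $w=f-x\cdot\nabla f-t\partial_t f$ satisfies the same transport operator up to the single residual $\eps\Delta f$ (the paper derives $w_t-\sF^i w_{x_i}=\eps\Delta w+\eps\Delta f$ and then bounds $\eps\int_0^s\!\int\Delta f\,\sigma^s$ by $O(\sqrt\eps)$ via Jensen and the weighted Hessian estimate~\eqref{e.nabla^2f_bound}). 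One small remark: your schematic "$\eps\int(|\nabla^2 f|^2+|\nabla\partial_t f|^2)^{1/2}\sigma^{1/2}\cdot\sigma^{1/2}$" overcomplicates the residual, which is literally $\eps\int\Delta f\,\sigma$; and both you and the paper are a little cavalier in lumping the $\eps\int\Delta\tilde{\psi^j}\,\d\bsigma$ term from the second identity into $o_\eta(1)$ rather than an explicit $O(\eps)$ error, but this is harmless since $\eps$ and $\eta$ are both sent to zero in the downstream application.
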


In the above, $f_{j,\eta,\eps}=f_{j,\eta,\eps}(t,x)$ and $\fint_{C_j(\underline t ,\underline x ,r)} = \frac{1}{\Ll|C_j(\underline t ,\underline x ,r)\Rr|} \int_{C_j(\underline t ,\underline x ,r)} \d t \d x$.
The first identity was proved in \cite[Theorem~2.2~(ii)]{evans2014envelopes}.
The remaining two can be deduced in a similar way. For completeness, we present the proof below.

\begin{proof}
For simplicity, we identify $\cL^j$ with $\R^n$ for some $n\in\N$ and denote the inner product in~\eqref{e.<x,y>_cL^j=} by the dot product.
We write $f= f_{j,\eta,\eps}$, $\sigma^{s} = \sigma_{j,\eta,\eps,(s,\underline x), r}$ (solving~\eqref{e.sigma_PDE} with $(\underline t,\underline x)$ replaced by $ (s,\underline x)$), $\sF=\H^j_\eta$, and $\sF^i = \Ll(\partial_{x_i} \sF\Rr)(\nabla f)$ for $i\in\{1,\dots,n\}$. For any function $g$, we write $g_t = \partial_t g$ and $g_{x_i}= \partial_{x_i}g$ for $i\in\{1,\dots,n\}$. We also perform summation over $i$ without writing out the notation, for instance, $a_ib^i = \sum_{i=1}^na_ib^i$.
It is standard to see from~\eqref{e.sigma_PDE} that $\sigma^a(x)$ and $\nabla\sigma^a(x)$ decay exponentially fast as $x\to\infty$.
Hence, applications of integration by parts to be used below are valid.

Let us prove the first identity. We set
\begin{align}\label{e.w=f-xnablaf-tf_t}
    w = f-x\cdot \nabla f - t f_t,
\end{align}
and we can compute
\begin{align*}
    w_t = -x \cdot \nabla f_t - tf_{tt};\quad w_{x_i} = -x\cdot \nabla f_{x_i}- tf_{x_it};\quad \Delta w =-\Delta f - x\cdot \nabla \Delta f-t \Delta f_t.
\end{align*}
Hence,
\begin{align}\label{e.w_t+Fw=}
    w_t - \sF^i w_{x_i} = -x\cdot \Ll(\nabla f_t - \sF^i \nabla f_{x_i}\Rr) -t \Ll( f_{tt} - \sF^i f_{x_it}\Rr) = -\eps x\cdot \nabla \Delta f -\eps t\Delta f_t = \eps \Delta w+\eps \Delta f.
\end{align}
Let $s \in[\underline t,\underline t+r]$. At $t\in [0,s]$, we compute
\begin{align*}
    \frac{\d}{\d t}\int w \sigma^s \d x & = \int  w_t\sigma^s + w \sigma^s_t\d x
    \\
    &\stackrel{\eqref{e.w_t+Fw=}\eqref{e.sigma_PDE}}{=} \int \Ll(\sF^iw_{x_i} + \eps \Delta w + \eps \Delta f\Rr) \sigma^s + w\Ll(\Ll(\sigma^s \sF^i\Rr)_{x_i}-\eps \Delta \sigma^s\Rr)\d x
    \\
    & \stackrel{\eqref{e.integration_by_parts}}{=} \eps \int \Delta f \sigma^s \d x
\end{align*}
where the integration is over $\R^n=\cL^j$.
Integrating this in $t$ from $0$ to $s$ and using the terminal condition of $\sigma^s(s,\cdot)$ given in~\eqref{e.sigma_PDE}, we get
\begin{align*}
    \fint_{B_j(\underline x,r)} w(s,x) \d x = \int w(0,x)\sigma^s(0,x)\d x + \eps \int_0^s\int \Delta f(t,x) \sigma^s(t,x)\d x \d t.
\end{align*}
Notice $\underline t \leq s \leq \underline t +r \leq 2R$. Using \eqref{e.nabla^2f_bound} in Lemma~\ref{l.f,sigma_estimates} (with $s$ substituted for $\underline{t}$ therein), and applying Jensen's inequality (to the probability measure $\sigma^s \d x$; see in~\eqref{e.sigma^a>0}), we can bound the last term and get
\begin{align*}
    \fint_{B_j(\underline x,r)} w(s,x) \d x = \int w(0,x)\sigma^s(0,x)\d x + o_\eps(1),
\end{align*}
where $o_\eps(1)$ is independent of $\eta,r$.
Recall $C_j(\underline t,\underline x, r)$ in~\eqref{e.C()} and the probability measure $\bsigma^{\eta,\eps}_{j,(\underline t,\underline x), r}$ in~\eqref{e.sigma^eps_j,(t,x),r}.
Averaging the above over $s\in[\underline t,\underline t+r]$ and using~\eqref{e.w=f-xnablaf-tf_t}, we obtain the first identity in~\eqref{e.l.visc_app}.

Now, we turn to the second identity. Again let $s \in [\underline t,\underline t+r]$ and we compute for $t\in[0,s]$
\begin{align*}
    \frac{\d}{\d t}\int f_t \sigma^s \d x & = \int f_{tt}\sigma^s + f_t\sigma^s_t \d x 
    \\
    &\stackrel{\eqref{e.f_j,eta,eps=}\eqref{e.sigma_PDE}}{=} \int \Ll(\sF^i f_{tx_i}+\eps \Delta f_t\Rr) \sigma^s + f_t \Ll(\Ll(\sigma^s \sF^i\Rr)_{x_i} - \eps \Delta \sigma^s\Rr)\d x\stackrel{\eqref{e.integration_by_parts}}{=}0.
\end{align*}
For $\delta>0$ small, integrating the above display over $t \in [\delta,s]$ and using the terminal condition in~\eqref{e.sigma_PDE}, we get (recall $\sF= \H^j_\eta$)
\begin{align*}
    &\fint_{B_j(\underline x,r)} f_t(s,x) \d x = \int f_t(\delta,x)\sigma^s(\delta,x)\d x 
    \\
    &\stackrel{\eqref{e.f_j,eta,eps=}\eqref{e.lim|F_j-H^j|=0}}{=} \int \H^j_\eta\Ll(\nabla f(\delta,x)\Rr)\sigma^s(\delta,x)\d x  + \eps \int \Delta f(\delta,x) \sigma^s(\delta,x)\d x .
\end{align*}
Then, we send $\delta\to0$. 
Recall the bound on the difference between $\H^j_\eta$ and $\H^j$ in~\eqref{e.lim|F_j-H^j|=0}. Hence, replacing $\H^j_\eta$ by $\H^j$ gives rise to an error term $o_\eta(1)$ that vanishes as $\eta\to0$.
Notice that by Lemma~\ref{l.ext}, $\nabla f(0,\cdot) = \nabla\tilde{\psi^j}$ is Lipschitz. Therefore, $\Delta f(0,\cdot)$ is bounded almost everywhere by $C_j\eps$ for some constant $C_j$ depending on $j$. Therefore, we get
\begin{align*}
    &\fint_{B_j(\underline x,r)} f_t(s,x) \d x= \int \H^j\Ll(\nabla \tilde{\psi^j}(x)\Rr)\sigma^s(0,x)\d x  + o_\eta(1) 
\end{align*}
where $o_\eta(1)$ is independent of $\eps,r$.
Averaging the above over $s\in[\underline t,\underline t+r]$ and using~\eqref{e.C()} and~\eqref{e.sigma^eps_j,(t,x),r}, we obtain the second identity in~\eqref{e.l.visc_app}.

For the third identity, taking $s$ and $t$ as before, we compute for each $k\in\{1,\dots,n\}$
\begin{align*}
    &\frac{\d}{\d t}\int f_{x_k} \sigma^s \d x = \int f_{tx_k}\sigma^s + f_{x_k}\sigma^s_t \d x\\
    &\stackrel{\eqref{e.f_j,eta,eps=}}{=} \int \Ll(\sF^i f_{x_ix_k}+\eps \Delta f_{x_k}\Rr) \sigma^s + f_{x_k} \Ll(\Ll(\sigma^s \sF^i\Rr)_{x_i} - \eps \Delta \sigma^s\Rr)\d x\stackrel{\eqref{e.integration_by_parts}}{=}0.
\end{align*}
Proceeding similarly as above, we arrive at the third identity in~\eqref{e.l.visc_app}.
\end{proof}

\subsection{Other preliminary results}\label{s.other_prelim_results}

In this subsection, apart from the results above, we derive other results to be used in the next section.

\begin{lemma}[Approximation at a differentiable point]\label{l.touch_approx}
Let $j\in\N$ and let $f_j$ be given as in~\eqref{e.f_j=}.
Suppose that $f_j$ is differentiable at some $(t_j,x_j)\in(0,\infty)\times \itr(\mcl Q^j_2)$.
Then, there is a collection $(t_{j,\eta,\eps},x_{j,\eta,\eps})_{j\in\N,\eta>0,\eps>0}$ of points in $(0,\infty)\times \itr(\mcl Q^j_2)$ such that
\begin{gather}
    \lim_{\eta\to0}\limsup_{\eps\to0}\Ll|(t_{j,\eta,\eps},x_{j,\eta,\eps})- (t_j,x_j)\Rr|_{\R\times \cL^j}=0, \label{e.(t,x)->(t,mu)}
    \\
    \lim_{\eta\to0}\limsup_{\eps\to0}\Ll|(\partial_t, \nabla)f_{j,\eta,\eps}(t_{j,\eta,\eps},x_{j,\eta,\eps})-(\partial_t, \nabla)f_j(t_j,x_j)\Rr|_{\R\times\cL^j}=0.\label{e.|Df-Df|<delta}
\end{gather}
\end{lemma}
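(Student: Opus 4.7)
The plan is a variational argument in the spirit of the standard trick for finding contact points between a smooth approximation and a differentiable function: perturb $f_{j,\eta,\eps}$ by an affine piece plus a quadratic centered at $(t_j, x_j)$, and select the minimizer of the perturbed functional on a small ball. The linear piece has slope $(a,p) := (\partial_t f_j(t_j, x_j), \nabla f_j(t_j, x_j))$, while the quadratic piece has amplitude $M$ tied to the ball radius $r$ so that (i) the penalty dominates the $o(r)$ differentiability remainder plus the convergence error, forcing the minimum into the interior; and simultaneously (ii) the ratio $M/r$ stays small, forcing the gradient at the minimum to be close to $(a,p)$. The layered convergence $f_{j,\eta,\eps}\to f_{j,\eta}\to f_j$ from Lemmas~\ref{l.f_j,eta,eps_to_f_j,eta} and~\ref{l.f_j,eta_cvg_f_j,0} lets us carry this out first in $\eps\to 0$ and then in $\eta\to 0$.

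Concretely, I would set
\[
\omega(r) = \sup_{|(t-t_j,x-x_j)|_{\R\times\cL^j}\leq r}\bigl|f_j(t,x)-f_j(t_j,x_j)-a(t-t_j)-p\cdot(x-x_j)\bigr|,
\]
which is $o(r)$ by differentiability of $f_j$ at $(t_j,x_j)$. Fix $\rho_0>0$ small enough that $\bar B_{\rho_0}(t_j,x_j)\subset(0,\infty)\times\itr(\mcl Q^j_2)$, set $\bar c_\eta := \|f_{j,\eta}-f_j\|_{L^\infty(\bar B_{\rho_0}(t_j,x_j))}$, and choose the scale
\[
r_\eta := \min\bigl(\max(\eta^{1/2},\, \bar c_\eta^{1/2}),\, \rho_0\bigr),
\]
so that $r_\eta\to 0$, $\omega(r_\eta)/r_\eta\to 0$, and $\bar c_\eta/r_\eta\leq \bar c_\eta^{1/2}\to 0$ as $\eta\to 0$. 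For each $\eta,\eps>0$ set
\[
M_{\eta,\eps} := \omega(r_\eta) + 2\|f_{j,\eta,\eps}-f_j\|_{L^\infty(\bar B_{r_\eta}(t_j,x_j))} + \eta r_\eta,
\]
and define the smooth perturbation
\[
\Phi_{\eta,\eps}(t,x) := f_{j,\eta,\eps}(t,x) - a(t-t_j) - p\cdot(x-x_j) + \tfrac{M_{\eta,\eps}}{r_\eta^2}\bigl|(t-t_j, x-x_j)\bigr|_{\R\times\cL^j}^2.
\]
Let $(t_{j,\eta,\eps},x_{j,\eta,\eps})$ be any minimizer of $\Phi_{\eta,\eps}$ on $\bar B_{r_\eta}(t_j,x_j)$. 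By the choice of $M_{\eta,\eps}$, on $\partial B_{r_\eta}(t_j,x_j)$,
\[
\Phi_{\eta,\eps}(t,x)-\Phi_{\eta,\eps}(t_j,x_j)\geq -\omega(r_\eta)-2\|f_{j,\eta,\eps}-f_j\|_{L^\infty(\bar B_{r_\eta})}+M_{\eta,\eps} = \eta r_\eta>0,
\]
so the minimizer lies in the open ball, which is contained in $(0,\infty)\times\itr(\mcl Q^j_2)$ once $r_\eta\leq \rho_0$. At this interior minimum, $\nabla\Phi_{\eta,\eps}=0$ yields
\[
\bigl(\partial_t,\nabla\bigr) f_{j,\eta,\eps}(t_{j,\eta,\eps},x_{j,\eta,\eps}) = (a,p) - \tfrac{2M_{\eta,\eps}}{r_\eta^2}(t_{j,\eta,\eps}-t_j,\, x_{j,\eta,\eps}-x_j),
\]
whence the derivative error is bounded by $2M_{\eta,\eps}/r_\eta$. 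The position error is at most $r_\eta$.

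To conclude~\eqref{e.(t,x)->(t,mu)} and~\eqref{e.|Df-Df|<delta}, I would take $\limsup_{\eps\to 0}$ using Lemma~\ref{l.f_j,eta,eps_to_f_j,eta}, which gives $\limsup_{\eps\to 0}\|f_{j,\eta,\eps}-f_j\|_{L^\infty(\bar B_{r_\eta})}\leq \bar c_\eta$, hence
\[
\limsup_{\eps\to0}\tfrac{M_{\eta,\eps}}{r_\eta} \leq \tfrac{\omega(r_\eta)}{r_\eta}+ \tfrac{2\bar c_\eta}{r_\eta}+\eta,
\]
and then send $\eta\to 0$, whereupon all three terms vanish by construction of $r_\eta$.

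The main obstacle is the delicate three-way balance between $r_\eta$, $\omega(r_\eta)$, and $\bar c_\eta$: the quadratic penalty has strength $M/r$ at the boundary, but the differentiability remainder is only known to be $o(r)$, not $O(r^2)$, so a naive quadratic perturbation with $r$-independent coefficient fails. Tying $M_{\eta,\eps}$ explicitly to both $\omega(r_\eta)$ and the uniform error $\|f_{j,\eta,\eps}-f_j\|_{L^\infty(\bar B_{r_\eta})}$, and choosing $r_\eta$ so that each of $\omega(r_\eta)/r_\eta$, $\bar c_\eta/r_\eta$, and $\eta$ tends to zero, is what makes the minimum interior yet keeps the derivative error asymptotically negligible in the iterated limit $\lim_{\eta\to 0}\limsup_{\eps\to 0}$.
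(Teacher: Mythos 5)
Your proposal is correct, and it takes a genuinely different route from the paper's proof, though both arguments are variational in flavor. Your argument perturbs $f_{j,\eta,\eps}$ by the affine piece with slope $(a,p)$ plus a \emph{quadratic} penalty $\tfrac{M_{\eta,\eps}}{r_\eta^2}|\cdot|^2$ on a ball of \emph{shrinking} radius $r_\eta$, then reads off the gradient at the interior minimizer from the first-order condition $\nabla\Phi_{\eta,\eps}=0$; the position estimate $|(t_{j,\eta,\eps},x_{j,\eta,\eps})-(t_j,x_j)|\leq r_\eta$ is free, and the derivative estimate $2M_{\eta,\eps}/r_\eta$ is controlled by balancing $r_\eta$ against $\omega(r_\eta)$ and $\bar c_\eta$ (this balance is the crux, and you identify and handle it correctly). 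The paper instead \emph{fixes} a small radius $r$ and uses a \emph{smoothed modulus of continuity} $\omega$ (with $\omega(0)=\dot\omega(0)=0$, citing Cannarsa--Sinestrari for its existence) as the perturbation, so that the test function $\phi$ touches $f_j$ from above at $(t_j,x_j)$ with matching gradient; it then shows the maximizer of $f_{j,\eta,\eps}-\phi$ on the fixed ball converges to $(t_j,x_j)$ by a contradiction argument exploiting the strict maximum, and obtains the derivative convergence from $D f_{j,\eta,\eps}=D\phi$ at the interior maximizer together with the continuity of $D\phi$ and $D\phi(t_j,x_j)=Df_j(t_j,x_j)$. What each approach buys: the paper's argument avoids your delicate three-way balance between $r_\eta$, $\omega(r_\eta)$, and $\bar c_\eta$ at the cost of invoking the existence of a smooth modulus; your argument works with the raw modulus and is self-contained, at the cost of a more explicit bookkeeping of scales. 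Both require the solution $f_{j,\eta,\eps}$ of the viscous equation to be $C^1$ at positive times, which holds by parabolic regularity. One small notational remark: your $p\cdot(x-x_j)$ should be the $\cL^j$-inner product $\la p, x-x_j\ra_{\cL^j}$ to match the paper's conventions, but this does not affect the argument.
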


\begin{proof}
We write $D= (\partial_t ,\nabla)$ for brevity.
Since $f$ is differentiable at $(t_j,x_j)$, we can find a smooth increasing function $\omega:[0,\infty)\to[0,\infty)$ (for the existence of $\omega$, see the function $\gamma$ in \cite[Lemma~3.1.8]{cannarsa2004semiconcave}) satisfying $\omega(0)=\dot \omega(0)=0$ (here $\dot \omega$ is its derivative) such that, for all $(t,x)\in(0,\infty)\times\itr(\mcl Q^j_2)$,
\begin{align*}
    \Ll|f_j(t,x) - f_j(t_j,x_j) - \la (t,x)-(t_j,x_j),\, D f_j(t_j,x_j)\ra_{\R\times\cL^j}\Rr|\leq \omega\Ll(|(t,x) -(t_j,x_j)|_{\R\times\cL^j}\Rr).
\end{align*}
Define $\phi:[0,\infty)\times \mcl Q^j_2\to \R$ by
\begin{align*}
    \phi(t,x) = f_j(t,x) + \la (t,x) -(t_j,x_j), \, Df_j(t_j,x_j)\ra_{\R\times\cL^j} + 2\omega\Ll(|(t,x) -(t_j,x_j)|_{\R\times\cL^j}\Rr)
\end{align*}
for all $(t,x)\in[0,\infty)\times\mcl Q^j_2$,
which is differentiable everywhere. Notice that $\phi(t_j,x_j) = f_j(t_j,x_j)$.
We can always modify $\omega$ to satisfy $\omega(r)>0$ for $r>0$.
Since the above two displays yield
\begin{align}\label{e.f-phi<f-phi}
    f_j(t,x) - \phi(t,x)\leq f_j(t_j,x_j)-\phi(t_j,x_j) -\omega\Ll(|(t,x) -(t_j,x_j)|_{\R\times\cL^j}\Rr),
\end{align}
$f_j-\phi$ achieves a strict maximum at $(t_j,x_j)$.
Fix any $r\in(0, t/2)$ to ensure that the set defined by
\begin{align*}
    B=\Ll\{(t,x)\in(0,\infty)\times \C^j_2:\:|(t,x)-(t_j,x_j)|_{\R\times\cL^j}\leq r\Rr\}
\end{align*}
satisfies $B\subset (0,\infty)\times \itr(\mcl Q^j_2)$.
Let $(t_{j,\eta,\eps},x_{j,\eta,\eps})\in B$ be a point at which $f_{j,\eta,\eps}-\phi$ achieves the maximum over $B$.

Let us fix an arbitrary $\delta\in(0,r)$.
The convergence of $f_{j,\eta} $ to $f_j$ (Lemmas~\ref{l.f_j,0_cvg_f_j} and~\ref{l.f_j,eta_cvg_f_j,0}) and that of $f_{j,\eta,\eps}$ to $f_{j,\eta}$ (Lemma~\ref{l.f_j,eta,eps_to_f_j,eta}) allow us to choose $c>0$ and $(c_\eta)_{\eta>0}$ such that
\begin{align}
    \sup_{\eta<c}\sup_{\eps<c_\eta}\sup_{B}\Ll|f_{j,\eta,\eps}-f_j\Rr|< \frac{\omega(\delta)}{4}.\label{e.sup_B|f-f|}
\end{align}
To show~\eqref{e.(t,x)->(t,mu)}, we argue by contradiction and suppose 
\begin{align}\label{e.contrad_assmp}
    |(t_{j,\eta,\eps},x_{j,\eta,\eps})-(t_j,x_j)|_{\R\times\cL^j}\geq\delta
\end{align}
for some $\eta<c$ and $\eps<c_\eta$.
Then, we have
\begin{align*}
    & f_{j,\eta,\eps}(t_{j,\eta,\eps},x_{j,\eta,\eps}) - \phi(t_{j,\eta,\eps},x_{j,\eta,\eps}) 
    \stackrel{\eqref{e.sup_B|f-f|}}{\leq} f_j(t_{j,\eta,\eps}, x_{j,\eta,\eps})-\phi(t_{j,\eta,\eps},x_{j,\eta,\eps})+\frac{\omega(\delta)}{4}
    \\
    &\stackrel{\eqref{e.f-phi<f-phi}\eqref{e.contrad_assmp}}{\leq} f_j(t_j,x_j)-\phi(t_j,x_j) -\frac{3\omega(\delta)}{4}
     \stackrel{\eqref{e.sup_B|f-f|}}{\leq} f_{j,\eta,\eps}(t_j,x_j)-\phi(t_j,x_j) - \frac{\omega(\delta)}{2}
\end{align*}
which contradicts the fact that $f_{j,\eta,\eps} - \phi$ achieves the maximum over $B$ at $(t_{j,\eta,\eps},x_{j,\eta,\eps})$. 
Therefore, by contradiction, we must have $|(t_{j,\eta,\eps},x_{j,\eta,\eps})-(t_j,x_j)|_{\R\times\cL^j}<\delta$ for every $\eps<c_\eta$ and $\eta<c$. Since $\delta$ is arbitrary, we get~\eqref{e.(t,x)->(t,mu)}

Lastly, we show~\eqref{e.|Df-Df|<delta}. By~\eqref{e.(t,x)->(t,mu)}, we have $|(t_{j,\eta,\eps},x_{j,\eta,\eps})- (t,\pj_j{q})|_{\R\times\cL^j}<r$ for sufficiently small $\eta,\eps>0$. Then, $f_{j,\eta,\eps}-\phi$ achieves its maximum over $B$ in the interior of $B$, which implies $D f_{j,\eta,\eps}(t_{j,\eta,\eps},x_{j,\eta,\eps}) = D\phi(t_{j,\eta,\eps},x_{j,\eta,\eps})$. 
On the other hand, the definition of $\phi$ gives $D f_j(t_j,x_j) = D \phi(t_j,x_j)$. 
These along with~\eqref{e.(t,x)->(t,mu)} and the smoothness of $\phi$ imply~\eqref{e.|Df-Df|<delta}.
\end{proof}

Recall the probability measure $\bsigma^{\eta,\eps}_{j,(t,x),r}$ introduced in~\eqref{e.sigma^eps_j,(t,x),r}.
\begin{lemma}[First moment bound]\label{l.1stmomet}

\rv{For every} $j,\eta, \eps,t,x,r$, we have
\begin{align*}
    \int_{\cL^j}|x'|_{\cL^j} \d\bsigma^{\eta,\eps}_{j,(t,x),r}(x') \leq (1+\|\H\|_\mathrm{Lip}+\eps)\Ll(1+|x|_{\cL^j}+t+r\Rr).
\end{align*}
\end{lemma}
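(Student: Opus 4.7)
The plan is to bound, for each fixed $s\in[\underline t,\underline t+r]$, the quantity $\int_{\cL^j}|x|_{\cL^j}\sigma^s(0,x)\,\d x$ with $\sigma^s:=\sigma_{j,\eta,\eps,(s,\underline x),r}$, and then average over $s$ via the definition \eqref{e.sigma^eps_j,(t,x),r}. To do so, I would test the adjoint PDE \eqref{e.sigma_PDE} against the smooth majorant $\phi(x):=\sqrt{1+|x|_{\cL^j}^2}$ of $|\cdot|_{\cL^j}$, noting $|x|_{\cL^j}\leq\phi(x)\leq 1+|x|_{\cL^j}$, and then control the evolution of $t\mapsto\int\phi\sigma^s(t,\cdot)\,\d x$ going backward from $t=s$ to $t=0$. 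A direct computation using the normalizations in \eqref{e.<x,y>_cL^j=} and \eqref{e.Delta,div=} shows that both $|\nabla\phi|_{\cL^j}$ and $|\Delta\phi|$ admit bounds that are uniform in $j$.

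Multiplying \eqref{e.sigma_PDE} by $\phi$ and integrating by parts twice via \eqref{e.integration_by_parts}, one arrives at
\begin{align*}
-\frac{\d}{\d t}\int_{\cL^j}\phi\,\sigma^s\,\d x = \int_{\cL^j}\la\nabla\phi,\nabla\H^j_\eta(\nabla f_{j,\eta,\eps})\ra_{\cL^j}\sigma^s\,\d x + \eps\int_{\cL^j}\Delta\phi\cdot\sigma^s\,\d x.
\end{align*}
Invoking $|\nabla\H^j_\eta|_{\cL^j}\leq\|\H\|_\mathrm{Lip}$ from \eqref{e.bound_on_nabla_F_j} together with Cauchy--Schwarz, and using that $\sigma^s(t,\cdot)$ is a probability density by \eqref{e.sigma^a>0}, the right-hand side is bounded in absolute value by $c(\|\H\|_\mathrm{Lip}+\eps)$ uniformly in $j,\eta,\eps,s,\underline x,r$, where $c$ is a universal constant absorbing the dimensional factor from the uniform bounds on $|\nabla\phi|_{\cL^j}$ and $|\Delta\phi|$.

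Integrating in $t$ from $0$ to $s$ and invoking the terminal condition $\sigma^s(s,\cdot)=|B_j(\underline x,r)|^{-1}\mathds{1}_{B_j(\underline x,r)}$ from \eqref{e.sigma_PDE} together with the elementary bound $\phi(x)\leq 1+|\underline x|_{\cL^j}+r$ for $x\in B_j(\underline x,r)$ (see \eqref{e.B_j(x,r)=}), one obtains
\begin{align*}
\int_{\cL^j}|x|_{\cL^j}\sigma^s(0,x)\,\d x\leq\int_{\cL^j}\phi(x)\sigma^s(0,x)\,\d x\leq 1+|\underline x|_{\cL^j}+r+cs(\|\H\|_\mathrm{Lip}+\eps).
\end{align*}
Averaging over $s\in[\underline t,\underline t+r]$ per \eqref{e.sigma^eps_j,(t,x),r} and using $s\leq\underline t+r$ yields a bound of the form $1+|\underline x|_{\cL^j}+r+c(\underline t+r)(\|\H\|_\mathrm{Lip}+\eps)$, which rearranges into the claimed $(1+\|\H\|_\mathrm{Lip}+\eps)(1+|\underline x|_{\cL^j}+\underline t+r)$ after the universal constant $c$ is absorbed.

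The main obstacle is verifying that $|\Delta\phi|$ (defined via \eqref{e.Delta,div=}) is bounded independently of $j$; this relies on the $2^{-j}$ normalization of both the inner product \eqref{e.<x,y>_cL^j=} and the Laplacian \eqref{e.Delta,div=}, which ensures that derivatives of quadratic-type surrogates of the norm do not blow up with the dyadic level. Equivalently, the estimate admits a probabilistic interpretation: the time-reversed adjoint describes a diffusion with drift bounded by $\|\H\|_\mathrm{Lip}$ and diffusivity $\eps$ started from the uniform distribution on $B_j(\underline x,r)$, and the conclusion follows from the standard first-moment estimate combined with $\sqrt{\eps(\underline t+r)}\leq(\eps+\underline t+r)/2$.
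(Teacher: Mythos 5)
Your proposal is correct and takes essentially the same route as the paper: the paper also tests the adjoint equation \eqref{e.sigma_PDE} against $p(x')=\sqrt{1+|x'|^2_{\cL^j}}$, integrates by parts via \eqref{e.integration_by_parts}, uses $|\nabla p|_{\cL^j}\le 1$, the bound $|\nabla\H^j_\eta|\le\|\H\|_\mathrm{Lip}$ from \eqref{e.bound_on_nabla_F_j}, the unit mass of $\sigma^s$ from \eqref{e.sigma^a>0}, then evaluates the terminal condition on the ball and averages in $s$. The only cosmetic difference is that you allow a universal constant $c$ to absorb the bound on $\Delta p$ whereas the paper simply asserts $\Delta p\le 1$; note that if $c>1$ the claimed inequality with coefficient exactly $(1+\|\H\|_\mathrm{Lip}+\eps)$ would not follow by a pure rearrangement, so you would have to verify (as the paper implicitly does via the $2^{-j}$ normalization in \eqref{e.Delta,div=}) that $c=1$ actually works, or else accept a dimension-dependent constant in the final estimate.
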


\begin{proof}
We set ${p}(x') = \sqrt{1+|x'|^2_{\cL^j}}$ for every $x'$ and use the shorthand notation $\sigma_s =\sigma_{j,\eta,\eps,(s,x),r}$ for $s\in[t,t+r]$. 
Using the equation~\eqref{e.sigma_PDE} satisfied by $\sigma_s$ and integration by parts in~\eqref{e.integration_by_parts}, we can compute
\begin{align*}
    \frac{\d}{\d t'}\int_{\cL^j}{p} (x')\sigma_s(t',x')\d x'  = -\int_{\cL^j}\Ll( \la\nabla{p},\nabla\H^j_\eta(\nabla f_{j,\eta,\eps})\ra_{\cL^j}+\eps \Delta {p}\Rr)\sigma_s  \geq -\Ll(\|\H\|_\mathrm{Lip}+\eps\Rr),
\end{align*}
where we used $|\nabla{p}|_{\cL^j}\leq 1$, $\Delta {p}\leq 1$ (due to the weight $\frac{1}{2^j}$ in $\Delta$, see~\eqref{e.Delta,div=}), the uniform bound $\|\H\|_\mathrm{Lip}$ on $\nabla\H^j_\eta$ in~\eqref{e.bound_on_nabla_F_j}, and $\int_{\cL^j}\sigma_s=1$ in~\eqref{e.sigma^a>0}. Integrating this in $t'$ from $0$ to $s$, we get
\begin{align}
    \int_{\cL^j}{p}(x')\sigma_s(0,x')\d x' & \leq \int_{\cL^j}{p}(x')\sigma_s(s,x')\d x' + (\|\H\|_\mathrm{Lip}+\eps)s \notag
    \\
    & \stackrel{\eqref{e.sigma_PDE}}{=} \frac{1}{|B_j(x,r)|_{\cL^j}}\int_{B_j(x,r)} \sqrt{1+|x'|^2_{\cL^j}}\d x' + (\|\H\|_\mathrm{Lip}+\eps)s \notag
    \\
    & \stackrel{\eqref{e.B_j(x,r)=}}{\leq} 1+|x|_{\cL^j}+r+(\|\H\|_\mathrm{Lip}+\eps)s. \label{e.<1+|x|+r+...}
\end{align}
Then, we can get
\begin{align*}
    \int_{\cL^j}|x'|_{\cL^j} \d\bsigma^{\eta,\eps}_{j,(t,x),r}(x') &\leq \int_{\cL^j}{p}(x') \d\bsigma^{\eta,\eps}_{j,(t,x),r}(x') \stackrel{\eqref{e.sigma^eps_j,(t,x),r}}{=} \fint_t^{t+r}\int_{\cL^j}{p}(x')\sigma_{s}(0,x')\d x' \d s
    \\
    &\stackrel{\eqref{e.<1+|x|+r+...}}{\leq} 1+|x|_{\cL^j} + r + (\|\H\|_\mathrm{Lip}+\eps)(t+r)
\end{align*}
which gives the announced bound.
\end{proof}

Note that the same argument in deriving~\eqref{e.<1+|x|+r+...} yields
\begin{align}\label{e.int_|x|<}
    \int_{\cL^j}|x'|_{\cL^j}\sigma_{j,\eta,\eps,(s,x),r}(t',x')\d x' \leq 1 + |x|_{\cL^j} + r +(\|\H\|_\mathrm{Lip}+\eps)(t+r)
\end{align}
for all $t'\in [0,s]$ and $s\in[t,t+r]$, which will be used below.

\begin{lemma}[Second moment bound]\label{l.2ndmomet}

There is a constant $C>0$ such that
\begin{align*}
    \int_{\cL^j}|x'|^2_{\cL^j} \d\bsigma^{\eta,\eps}_{j,(t,x),r}(x') \leq (C+\eps)\Ll(C+|x|^2_{\cL^j}+t^2+r^2\Rr)
\end{align*}
uniformly in $j,\eta,\eps,t,x,r$.
\end{lemma}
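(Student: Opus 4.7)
The plan is to imitate the argument of Lemma~\ref{l.1stmomet}, with the sub-linear test function $\sqrt{1+|x'|^2_{\cL^j}}$ replaced by the quadratic one $p(x'):=1+|x'|^2_{\cL^j}$. A direct computation from the definitions~\eqref{e.<x,y>_cL^j=} and~\eqref{e.Delta,div=} gives $|\nabla p|_{\cL^j}\leq C|x'|_{\cL^j}$ and $|\Delta p|\leq C$ uniformly in $j$, where $C$ depends only on $\D$; the normalizing factor $2^{-j}$ built into both the inner product and the Laplacian is exactly what keeps $|\Delta p|$ bounded independently of $j$.

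Fix $s\in[t,t+r]$ and abbreviate $\sigma_s=\sigma_{j,\eta,\eps,(s,x),r}$. Combining the adjoint equation~\eqref{e.sigma_PDE}, the integration-by-parts identities~\eqref{e.integration_by_parts}, the Lipschitz bound~\eqref{e.bound_on_nabla_F_j} on $\nabla\H^j_\eta$, and the normalization~\eqref{e.sigma^a>0}, I would obtain
\begin{align*}
\frac{\d}{\d t'}\int_{\cL^j} p\,\sigma_s\,\d x' = -\int_{\cL^j}\Big(\la\nabla p,\nabla\H^j_\eta(\nabla f_{j,\eta,\eps})\ra_{\cL^j} + \eps \Delta p\Big)\sigma_s\,\d x' \geq -C\|\H\|_\mathrm{Lip}\int_{\cL^j}|x'|_{\cL^j}\sigma_s\,\d x' - C\eps.
\end{align*}
The remaining first-moment integral on the right is precisely the quantity already controlled by the uniform estimate~\eqref{e.int_|x|<} (a by-product of the proof of Lemma~\ref{l.1stmomet}), which reads $\leq C(1+|x|_{\cL^j}+t+r+\eps(t+r))$ for every $t'\in[0,s]$.

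Integrating in $t'$ from $0$ to $s\leq t+r$, evaluating the terminal value $\int p\,\sigma_s(s,\cdot)\,\d x'=\fint_{B_j(x,r)}(1+|x'|^2_{\cL^j})\,\d x'\leq 1+(|x|_{\cL^j}+r)^2$ via~\eqref{e.sigma_PDE}, and then handling every cross term (most importantly $(t+r)|x|_{\cL^j}$, $(t+r)^2$, and $\eps(t+r)$) by $2ab\leq a^2+b^2$ collapses the estimate into $C(1+|x|^2_{\cL^j}+t^2+r^2)+C\eps(1+t^2+r^2)$. Averaging over $s\in[t,t+r]$ via~\eqref{e.sigma^eps_j,(t,x),r} and subtracting the trivial $1$ then produces the second-moment bound, which after enlarging $C$ fits the stated additive form $(C+\eps)(C+|x|^2_{\cL^j}+t^2+r^2)$. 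The only genuinely non-mechanical point is this last step of bookkeeping: the raw inequality falls out in the multiplicative shape $C(1+\eps)(1+|x|^2_{\cL^j}+t^2+r^2)$, and recasting it into the additive shape claimed in the lemma needs one to enlarge the absolute constant so that the $(1+\eps)$ prefactor is absorbed (this is harmless in the later applications of the lemma, where $\eps$ is eventually taken to $0$).
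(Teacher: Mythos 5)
Your proposal is correct and follows essentially the same route as the paper: test against (a shift of) $|x'|^2_{\cL^j}$, differentiate along the adjoint flow using~\eqref{e.sigma_PDE} and~\eqref{e.integration_by_parts}, invoke the by-product first-moment estimate~\eqref{e.int_|x|<} from the proof of Lemma~\ref{l.1stmomet}, integrate in $t'$, evaluate the terminal value on the ball, and average in $s$. The closing remark about recasting a $C(1+\eps)$-type prefactor into the stated $(C+\eps)$-type one is a fair (if minor) observation; the paper simply absorbs this by ``enlarging $C$'' without comment, and in all later applications $\eps$ is restricted to $(0,1)$, so the two forms are interchangeable there.
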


\begin{proof}
We set ${p}(x') =|x'|^2_{\cL^j}$ for every $x'$ and again use the shorthand notation $\sigma_s =\sigma_{j,\eta,\eps,(s,x),r}$ for $s\in[t,t+r]$.
Using the equation~\eqref{e.sigma_PDE} satisfied by $\sigma_s$ and integration by parts in~\eqref{e.integration_by_parts}, we can compute
\begin{align*}
    \frac{\d}{\d t'}\int_{\cL^j}{p} (x')\sigma_s(t',x')\d x'  = -\int_{\cL^j}\Ll( \la\nabla{p},\nabla\H^j_\eta(\nabla f_{j,\eta,\eps})\ra_{\cL^j}+\eps \Delta {p}\Rr)\sigma_s 
    \\
    \geq -(C+\eps)(C+|x|_{\cL^j}+t+r),
\end{align*}
for some absolute constant $C>0$,
where we used $|\nabla{p}(x')|_{\cL^j}= 2|x'|_{\cL^j}$, $\Delta {p} = 2$ (due to the weight $\frac{1}{2^j}$ in $\Delta$, see~\eqref{e.Delta,div=}), the boundedness of $\nabla\H^j_\eta$ in~\eqref{e.bound_on_nabla_F_j}, and~\eqref{e.int_|x|<}. Integrating this in $t'$ from $0$ to $s$, we get
\begin{align*}
    \int_{\cL^j}{p}(x')\sigma_s(0,x')\d x' & \leq \int_{\cL^j}{p}(x')\sigma_s(s,x')\d x' + (C+\eps)(C+|x|_{\cL^j}+t+r)s
    \\
    & \stackrel{\eqref{e.sigma_PDE}}{=} \frac{1}{|B_j(x,r)|_{\cL^j}}\int_{B_j(x,r)} |x'|^2_{\cL^j}\d x' + (C+\eps)(C+|x|_{\cL^j}+t+r)s
    \\
    & \stackrel{\eqref{e.B_j(x,r)=}}{\leq} 2\Ll(|x|^2_{\cL^j}+r^2\Rr)+(C+\eps)(C+|x|_{\cL^j}+t+r)s.
\end{align*}
Averaging this in $s$ from $t$ to $t+r$ and using the above estimate, we get
\begin{align*}
    \int_{\cL^j}|x'|^2_{\cL^j} \d\bsigma^{\eta,\eps}_{j,(t,x),r}(x') & \stackrel{\eqref{e.sigma^eps_j,(t,x),r}}{=} \fint_t^{t+r}\int_{\cL^j}{p}(x')\sigma_{s}(0,x')\d x' \d s
    \\
    &\leq (C+\eps)\Ll(C+|x|^2_{\cL^j}+t^2+r^2\Rr)
\end{align*}
for some larger $C$, as announced.
\end{proof}
One can continue iterating this to obtain bounds on higher moments. For our purpose, bounds on the first two moments are sufficient.

We equip $L^2$ with the Borel sigma-algebra with respect to the $L^2$-metric and consider probability measures on it. 
Recall the lift map $\lf_j:\cL^j\to L^2$ from~\eqref{e.pj,lj=}.
We need the following tightness criterion.

\begin{lemma}[Tightness]\label{l.weak_cvg}
Let $(\gamma_j)_{j\in \N}$ be a sequence of Borel probability measures on $\cH$. Assume $\gamma_j = \bsigma_j\circ \lf_j^{-1}$ for some Borel probability measure $\bsigma_j$ on $\cL^j$ for each $j$ and assume $\sup_{j}\int_\cH |q|_{L^2}\d \gamma_j(q)<\infty$. Then, there is a subsequence $(\gamma_{j_n})_{n\in\N}$ converging weakly to some probability measure $\gamma_\infty$ on $\cH$.

\end{lemma}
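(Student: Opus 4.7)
The plan is to derive tightness from the first-moment bound via Markov's inequality and then apply Prokhorov's theorem on closed balls of $\cH$, exploiting that these are weakly compact and weakly metrizable because $\cH$ is a separable Hilbert space.

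First, setting $M := \sup_j \int_\cH |q|_{L^2}\, d\gamma_j(q) < \infty$, Markov's inequality gives $\gamma_j(\{q\in\cH : |q|_{L^2} > R\}) \leq M/R$ uniformly in $j$, so for every $\eps > 0$ the closed ball $\bar B_R\subset\cH$ with $R = M/\eps$ satisfies $\gamma_j(\bar B_R) \geq 1 - \eps$ for all $j$. This is the uniform ``ball concentration'' needed to feed into a Prokhorov-type extraction. I would also record the accompanying second-moment information when the lemma is applied to the adjoint measures of Section~\ref{s.f-dPDEs}, since Lemmas~\ref{l.1stmomet}--\ref{l.2ndmomet} provide exactly such bounds on $\bsigma^{\eta,\eps}_{j,(t,x),r}$ (and hence, by isometry of $\lf_j$, on the corresponding $\gamma_j$).

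Next, I would invoke that $\bar B_R$ is weakly compact (Banach--Alaoglu) and that the weak topology restricted to $\bar B_R$ is metrizable because $\cH$ is separable, so $(\bar B_R,\mathrm{weak})$ is a compact metric space. By separability, its weak Borel $\sigma$-algebra agrees with the trace of the norm Borel $\sigma$-algebra on $\cH$, and hence each restriction $\gamma_j|_{\bar B_R}$ is a Borel (sub)probability measure on this compact metric space. Prokhorov's theorem on each such $\bar B_R$, combined with a diagonal extraction over $R\in\N$, yields a subsequence $(\gamma_{j_n})$ and Radon measures $\gamma_\infty^R$ on $(\bar B_R,\mathrm{weak})$ such that $\gamma_{j_n}|_{\bar B_R}$ converges narrowly to $\gamma_\infty^R$ for every $R$; the Markov mass estimate guarantees compatibility $\gamma_\infty^R|_{\bar B_{R'}} = \gamma_\infty^{R'}$ for $R\geq R'$ and total mass one in the limit $R\to\infty$, so these glue into a single Borel probability measure $\gamma_\infty$ on $\cH$ to which $(\gamma_{j_n})$ converges weakly.

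The structural hypothesis $\gamma_j = \bsigma_j\circ\lf_j^{-1}$ does not directly drive the extraction itself, but it guarantees that each $\gamma_j$ is genuinely a Borel probability measure on $\cH$ pushed forward from a finite-dimensional measure through the isometric lift $\lf_j$, which is what allows the family from different dimensions $\cL^j$ to be compared inside a common ambient space $\cH$. The main subtlety I expect is precisely that a first-moment bound yields no \emph{norm} tightness of $(\gamma_j)$ in the infinite-dimensional space $\cH$ (norm balls are not compact); the argument must therefore be executed in the weak topology on balls, where both compactness and metrizability are available on a separable Hilbert space, and any downstream use of the limit that requires norm continuity of test functionals will then have to be supplemented by an independent upgrade (for instance via Lemma~\ref{l.cvg_L^r} when the supports enjoy a uniform $L^\infty$ bound).
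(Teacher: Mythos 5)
Your extraction scheme is genuinely different from the paper's, and as written it proves a strictly weaker statement than Lemma~\ref{l.weak_cvg} asserts. The conclusion of the lemma, as defined immediately below its statement and as actually used in Step~4 of the proof of Proposition~\ref{p.2nd_eq_diff_pt}, is convergence of $\int g\,\d\gamma_{j_n}$ for \emph{every} bounded $g$ that is continuous for the $L^2$-norm topology (the test functions there are merely norm-Lipschitz, being built from $\psi$, $\bnabla\psi$ and $\bxi$). Your Prokhorov argument on the weakly compact, weakly metrizable balls $\bar B_R$ can only produce convergence against bounded functions that are continuous for the \emph{weak} topology on bounded sets, and these two notions genuinely differ in infinite dimensions: Dirac masses at an orthonormal sequence of $\cH$ converge, in your ball-restricted narrow sense, to $\delta_0$, yet no subsequence converges in the standard sense, because a standard weak limit of Dirac masses is again a Dirac mass whose atom is a norm limit of the atoms. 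So the ``independent upgrade'' you defer to downstream applications is not a supplement; it is the entire content of the lemma, and nothing in your proposal supplies it. The pointer to Lemma~\ref{l.cvg_L^r} does not close this gap either: that lemma upgrades convergence of individual paths in $\mcl Q_\infty$ under a uniform $L^\infty$ bound, whereas here the measures are only assumed to have bounded first moments and need not charge any $L^\infty$-ball (nor even the cone).

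Relatedly, your argument makes no essential use of the hypothesis $\gamma_j=\bsigma_j\circ\lf_j^{-1}$, while it is the backbone of the paper's proof: the paper pushes each $\gamma_j$ forward under every projection $\pj_k$, obtains tightness on the finite-dimensional spaces $\cL^k$ from the moment bound, extracts a diagonal subsequence along which $\gamma_{j_n}\circ\pj_k^{-1}\to\varsigma_k$ for every $k$, verifies that $(\varsigma_k)_k$ is consistent with respect to the maps $\pj_k\lf_{k'}$ via Lemma~\ref{l.basics_(j)}~\eqref{i.projective}, constructs $\gamma_\infty$ on $\cH$ by Kolmogorov extension, and only then upgrades the level-wise weak convergence to weak convergence of $\gamma_{j_n}$ on $\cH$ by invoking the projective-systems result of \cite{ccapar1993weak}. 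That final upgrading step, which is where the structural hypothesis and the cited theorem enter, is precisely the step your proposal leaves open.
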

The weak convergence here is the standard one: $\lim_{n\to\infty}\int_\cH g\,\d \gamma_{j_n} = \int_\cH g\,\d \gamma_\infty$ for every bounded continuous function $g:\cH\to\R$.

\begin{proof}
For each $k\in\N$, the bound $\sup_{j}\int_\cH |q|_{L^2}\d \gamma_j(q)<\infty$ implies that, for every $k$, the sequence $(\gamma_j\circ \pj_k^{-1})_{j\in\N}$ of probability measures on $\cL^k$ is tight. Hence, we can pass to a weakly converging subsequence. 
Let $(j^1_n)_{n=1}^\infty$ be an increasing sequence of positive integers along which $\gamma_{j^1_n}\circ\pj_1^{-1}$ converges to some probability measure $\varsigma_1$ on $\cL^1$ as $n\to\infty$. Then, inductively, given a sequence $(j^k_n)_{n=1}^\infty$, we extract from it a further subsequence $(j^{k+1}_n)_{n=1}^\infty$ such that $\gamma_{j^{k+1}_n}\circ\pj_{k+1}^{-1}$ converges to some probability measure $\varsigma_{k+1}$ on $\cL^{k+1}$ as $n\to\infty$. 

Setting $\tilde \gamma_n=
\gamma_{j^n_n}$ for each $n\in\N$, we have that for every $k\in\N$, the sequence $(\tilde\gamma_n\circ\pj_k^{-1})_{n\in\N}$ of probability measures on $\cL^k$ converges weakly to $\varsigma_k$ as $n\to\infty$. 

Using this convergence and Lemma~\ref{l.basics_(j)}~\eqref{i.projective} (the projective property), we can deduce $\varsigma_{k'}\circ(\pj_k\lf_{k'})^{-1} = \varsigma_k$ for $k'\geq k$, which implies that $(\varsigma_k)_{k\in\N}$ is consistent with respect to projections $\pj_k \lf_{k'} :\cL^{k'}\to \cL^k$ indexed by $k'\geq k$. Kolmogorov's extension theorem (c.f.\ \cite[Theorem~2.4.3]{tao2011introduction}) yields a probability measure $\gamma_\infty$ on $\cH$ such that $\gamma_\infty\circ\pj_k^{-1} = \varsigma_k$ for every $k$.

Using \cite[Theorem~3.4]{ccapar1993weak} (more precisely, we use the equivalence between i) and ii) therein; we substitute $(\bsigma_{j^n_n})_n$ and $\gamma_\infty$ for $(\mu_\alpha)_\alpha$ and $\mu$ in its statement), we can upgrade the weak convergence of $(\tilde\gamma_n\circ\pj_k^{-1})_{n\in\N}$ for every $k$ to the weak convergence of $\tilde \gamma_n$. Choosing $(j_n)_{n\in\N}$ via $\gamma_{j_n}=\tilde \gamma_n$, we obtain the desired result.
\end{proof}

Lastly, we show that asymptotically the measure $\bsigma^{\eta,\eps}_{j,(t,x),r}$ in~\eqref{e.sigma^eps_j,(t,x),r} is supported on the cone $\mcl Q_2^j$ if $x$ is away from the boundary. 

\begin{lemma}[Support on the cone]\label{l.supp_cone}
Let $j\in\N$, $R>0$, and $\underline x\in \itr(\mcl Q^j_2)$. Then, we have
\begin{align*}
    \lim_{c\to0}\sup_{\substack{\eta,\eps,r\in(0,c)\\ |(t,x)|_{\R\times \cL^j}\leq R\\ x\in \underline x+\C_2^j}}\bsigma^{\eta,\eps}_{j,(t,x),r}\Ll(\cL^j\setminus\C_2^j\Rr)=0.
\end{align*}
\end{lemma}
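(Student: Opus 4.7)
The approach I would take is probabilistic. After the time reversal $\tau = s - t$, the adjoint equation~\eqref{e.sigma_PDE} becomes a forward Fokker--Planck equation, so $\sigma_{j,\eta,\eps,(s,x),r}(0,\cdot)$ is the density at time $s$ of the It\^o diffusion on $\cL^j$
\[
dY_\tau = V(s-\tau, Y_\tau)\, d\tau + \sqrt{2\eps}\, dW_\tau,\qquad Y_0 \sim |B_j(x,r)|^{-1}\mathds{1}_{B_j(x,r)},
\]
with drift $V(\tau, y) := \nabla \H^j_\eta(\nabla f_{j,\eta,\eps}(\tau, y))$. Smoothness of $V$, needed to identify $\sigma$ with this density, comes from the mollification by $\phi_{j,\eta}$ and parabolic interior smoothness of $f_{j,\eta,\eps}$ for $t>0$. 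The plan is to show $V$ is cone-valued, and then close by a convex-cone inclusion and a Gaussian tail estimate.

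The first step is to verify $V(\tau, y) \in \mcl Q^j_2$ pointwise. By Lemma~\ref{l.proj_cones}, $\lf_j((\mcl Q^j_2)^*) \subset \C_2^*$, which transfers the $\C_2^*$-monotonicity of $\H$ (property~\ref{i.d2}) to $(\mcl Q^j_2)^*$-monotonicity of $\H^j$. Rademacher's theorem together with Lemma~\ref{l.duality_cones} then gives $\nabla \H^j(z) \in ((\mcl Q^j_2)^*)^* = \mcl Q^j_2$ for a.e.\ $z$. Since $\nabla \H^j_\eta = (\nabla \H^j) * \phi_{j,\eta}$ with $\phi_{j,\eta} \geq 0$ of integral $1$, and $\mcl Q^j_2$ is a closed convex cone, it follows that $\nabla \H^j_\eta(z) \in \mcl Q^j_2$ for every $z$, hence $V \in \mcl Q^j_2$ pointwise. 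Integrating the SDE pathwise then yields
\[
Y_s - Y_0 - \sqrt{2\eps}\, W_s \,=\, \int_0^s V(s-\tau, Y_\tau)\, d\tau \,\in\, \mcl Q^j_2,
\]
since $\mcl Q^j_2$ is closed under Riemann integration of cone-valued maps.

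The closing step is geometric and quantitative. Set $\delta := \dist(\underline x, \partial \mcl Q^j_2) > 0$, so $\underline x + B_j(0,\delta) \subset \mcl Q^j_2$. Closure of $\mcl Q^j_2$ under addition gives $x + B_j(0, \delta) \subset \mcl Q^j_2$ for every $x \in \underline x + \mcl Q^j_2$. Hence on the event $\{\sqrt{2\eps}\,|W_s|_{\cL^j} < \delta - r\}$ the point $Y_0 + \sqrt{2\eps} W_s$ lies in $x + B_j(0, \delta) \subset \mcl Q^j_2$, whence $Y_s \in \mcl Q^j_2 + \mcl Q^j_2 = \mcl Q^j_2$. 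Restricting to $c \leq \delta/2$ and using Chebyshev with the (fixed-$j$) bound $\E |W_s|^2_{\cL^j} \leq C_{j,\D}\,R$, one obtains
\[
\bsigma^{\eta,\eps}_{j,(t,x),r}(\cL^j \setminus \mcl Q^j_2) \;\leq\; \P\!\left[\sqrt{2\eps}\,|W_s|_{\cL^j} > \delta - r\right] \;\leq\; \frac{C'_{j,\D,R}\, \eps}{\delta^2},
\]
uniformly in $\eta, r, t, x$ in the stated ranges; averaging in $s\in[t,t+r]$ preserves the bound, and sending $c \to 0$ concludes.

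The step I expect to require the most care is the drift-in-cone claim: namely, promoting the a.e.\ statement $\nabla \H^j \in \mcl Q^j_2$ (via $(\mcl Q^j_2)^*$-monotonicity plus Rademacher and duality) to an everywhere statement for the mollified $\nabla \H^j_\eta$, and verifying that the Fokker--Planck/SDE correspondence is meaningful with the paper's conventions (the $2^{-j}$ weight in~\eqref{e.<x,y>_cL^j=} changes constants but, since $j$ is fixed throughout, not the conclusion).
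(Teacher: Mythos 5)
Your proof is correct, and it takes a genuinely different route from the paper. The paper argues by soft compactness: it tests $\bsigma^{\eta,\eps}_{j,(t,x),r}$ against functions of the form $\phi(-\langle a, x' - \underline x\rangle_{\cL^j})$ with $a \in (\mcl Q^j_2)^*$ and $\phi$ smooth, increasing, vanishing on $(-\infty,0]$; integrates the adjoint equation in time to show these test integrals are $O(r + \eps)$ using $\langle a, \nabla\H^j_\eta\rangle_{\cL^j} \geq 0$ (the same cone-valued drift fact you use); then passes to a weak subsequential limit $\gamma$, concludes $\gamma$ is supported on $\underline x + \mcl Q^j_2$, and invokes the Portmanteau theorem together with $\underline x + \mcl Q^j_2 \subset \itr(\mcl Q^j_2)$. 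You instead reverse time, identify $\sigma(0,\cdot)$ as the density of the diffusion $Y$, decompose $Y_s = (Y_0 + \sqrt{2\eps}\,W_s) + \int_0^s V\,d\tau$, and note that the second summand is in the cone deterministically while the first lands in the cone whenever the Brownian part is smaller than $\delta - r$. This gives an explicit quantitative bound $\bsigma^{\eta,\eps}_{j,(t,x),r}(\cL^j\setminus\mcl Q^j_2) \lesssim_{j,\D,R} \eps/\delta^2$, which is strictly more information than the paper's qualitative limit and avoids the compactness/subsequence bookkeeping entirely. The trade-off is that you rely on identifying the adjoint solution with a diffusion density, which requires regularity of the drift $V$ and uniqueness for the time-reversed Fokker--Planck equation; these are standard for bounded Lipschitz drift but are genuine verifications the paper sidesteps.

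Two small remarks. First, your Rademacher detour to get $\nabla\H^j \in \mcl Q^j_2$ a.e.\ and then convolve is more work than needed: $\H^j_\eta$ is already smooth and inherits $(\mcl Q^j_2)^*$-monotonicity directly from $\H^j$ (convolution with a nonnegative kernel preserves it), so one can immediately conclude $\nabla\H^j_\eta(z) \in ((\mcl Q^j_2)^*)^* = \mcl Q^j_2$ for every $z$ without passing through the a.e.\ statement for $\nabla\H^j$; this is what the paper does. Second, because of the $2^{-j}$ weighting in $\Delta$ and $\div$ from~\eqref{e.Delta,div=}, the SDE you write down actually has drift and diffusion rescaled by a $j$-dependent factor relative to $V$ and $\sqrt{2\eps}$; you are right that this is harmless since $\mcl Q^j_2$ is a cone (so a positive rescaling of the drift stays cone-valued) and $j$ is fixed, but it is worth being explicit that the constants $C_{j,\D}$ absorb this.
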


\begin{proof}
It suffices to show that $\bsigma^{\eta,\eps}_{j,(t_{\eta,\eps,r},x_{\eta,\eps,r}),r}\Ll(\cL^j\setminus\C_2^j\Rr)$ converges to $0$ along any vanishing subsequence of $(\eta,\eps,r)$ along which $(t_{\eta,\eps,r},x_{\eta,\eps,r})$ satisfies the constraint and $(\bsigma^{\eta,\eps}_{j,(t_{\eta,\eps,r},x_{\eta,\eps,r}),r})_{\eta,\eps,r>0}$ converges weakly to some probability measure $\gamma$. (Indeed, supposing that the lemma does not hold, we can use this and the tightness of $(\bsigma^{\eta,\eps}_{j,(t_{\eta,\eps,r},x_{\eta,\eps,r}),r})_{\eta,\eps,r}$ guaranteed by Lemmas~\ref{l.1stmomet} to produce a counterexample.) For brevity, we avoid the notation of subsequences and assume that $\bsigma^{\eta,\eps}_{j,(t_{\eta,\eps,r},x_{\eta,\eps,r}),r}$ converges to $\gamma$ as $\eta,\eps,r\to 0$.

First, we want to show that $\gamma(\underline x+\C_2^j)=1$. Let $\phi:\R\to[0,1]$ be any smooth function with bounded derivatives and satisfying $\phi =0 $ on $(-\infty,0]$, $\phi>0$ on $(0,\infty)$, and the derivative $\phi'\geq 0$. We write $\|\phi'\|_\infty = \sup |\phi'|$ and similarly for $\|\phi''\|_\infty$.
Fix any $a \in (\C_2^j)^*$ (see the notion of dual cones in Definition~\ref{d.dual_cone}) and set ${p}(x') = \phi( - \la a, x'-\underline x\ra_{\cL^j})$ for every $x'$. Then, we can compute that $\nabla {p} = - \phi' a $ and $\Delta {p} = \phi''|a|^2_{\cL^j}$. 
Recall from~\ref{i.d2} (below~\eqref{e.H=}) that $\H$ is $\mcl Q^*_2$-increasing (see Definition~\ref{d.dual_cone}), which along with Lemma~\ref{l.proj_cones} (the second inclusion therein) implies that $\H^j$ (defined as in~\eqref{e.j-projection}) is $(\mcl Q^j_2)^*$-increasing. 
From the definition of $\H^j_\eta$ from~\eqref{e.H^j_eta=}, it is clear that $\H^j_\eta$ is also $(\mcl Q^j_2)^*$-increasing. By Definition~\ref{d.dual_cone} of dual cones, this implies that $\nabla \H^j_\eta$ takes value in the dual of $(\mcl Q_2^j)^*$, which is equal to $\mcl Q_2^j$ by Lemma~\ref{l.duality_cones}. Hence, we have $\la a, \nabla \H^j_\eta\ra_{\cL^j}\geq 0$.
Henceforth, we write $\sigma_s = \sigma_{j,\eta,\eps,(s,x_{\eta,\eps,r}),r}$ for $s\in [t_{\eta,\eps,r},t_{\eta,\eps,r}+r]$.
Using these, the equation satisfied by $\sigma_s$ in~\eqref{e.sigma_PDE}, and integration by parts in~\eqref{e.integration_by_parts}, we have
\begin{align*}
    \frac{\d}{\d t'}\int_{\cL^j}{p} \sigma_s(t',\cdot)  = -\int_{\cL^j}\Ll( \la\nabla{p},\nabla\H^j_\eta(\nabla f_{j,\eta,\eps})\ra_{\cL^j}+\eps \Delta {p}\Rr) \sigma_s(t',\cdot) \geq - C\eps
\end{align*}
where $C= \|\phi''\|_\infty |a|^2_{\cL^j}$. Integrating this in $t'$ from $0$ to $s$, we get
\begin{align*}
    \int_{\cL^j}{p}(x')\sigma_s(0,x')\d x' & \leq \int_{\cL^j}{p}(x')\sigma_s(s,x')\d x' + C\eps s
    \\
    & \stackrel{\eqref{e.sigma_PDE}}{=} \frac{1}{|B_j(x_{\eta,\eps,r},r)|}\int_{B_j(x_{\eta,\eps,r},r)} \phi\Ll( - \la a, x'-\underline x\ra_{\cL^j}\Rr)\d x' + C\eps s.
\end{align*}
Due to $x_{\eta,\eps,r}\in \underline x+\C_2^j$, we have $\la a, x_{\eta,\eps,r}-\underline x \ra_{\cL^j}\geq 0$. Hence, since $\phi$ is increasing and $\phi(0)=0$, we have
\begin{align*}
    \int_{B_j(x_{\eta,\eps,r},r)} \phi\Ll( - \la a, x'-\underline x\ra_{\cL^j}\Rr)\d x' 
    & \stackrel{\eqref{e.B_j(x,r)=}}{=} \int_{B_j(0,r)} \phi\Ll( - \la a, x'+x_{\eta,\eps,r}-\underline x\ra_{\cL^j}\Rr)\d x' 
    \\
    &\leq \int_{B_j(0,r)} \phi\Ll( - \la a, x'\ra_{\cL^j}\Rr)\d x'
    \stackrel{\eqref{e.B_j(x,r)=}}{\leq} |B_j(0,r)|\|\phi'\|_\infty |a|_{\cL^j}r.
\end{align*}
Therefore,
\begin{align*}
    \int_{\cL^j}{p}(x')\sigma_s(0,x')\d x'  \leq  \|\phi'\|_\infty |a|_{\cL^j}r+C\eps s.
\end{align*}
Averaging this in $s$ from $t_{\eta,\eps,r}$ to $t_{\eta,\eps,r}+r$ and using the definition of $\bsigma^{\eta,\eps}_{j,(t,x),r}$ in~\eqref{e.sigma^eps_j,(t,x),r} we get
\begin{align*}
    \int_{\cL^j}{p} (x') \d\bsigma^{\eta,\eps}_{j,(t_{\eta,\eps,r},x_{\eta,\eps,r}),r}(x') \leq C(r+\eps+r\eps)
\end{align*}
for some $C$ independent of $r$ and $\eps$ (as long as $r,\eps<1$).
Sending $\eta,\eps,r$ to $0$, we get
\begin{align*}
    \int_{\cL^j} \phi\Ll( - \la a, x'-\underline x\ra_{\cL^j}\Rr) \d \gamma(x') =0.
\end{align*}
Since $\phi$ is arbitrary, $\gamma$ is supported on $S_a = \{x':\la a, x'-\underline x\ra_{\cL^j}\geq 0\}$. Taking $\cup_a S_a$ over a countable dense set in $(\C_2^j)^*$, we have that $\gamma$ is supported on $\{x':\la a, x'-\underline x\ra_{\cL^j}\geq 0,\ \forall a \in (\C_2^j)^*\}$ which is exactly $\underline x+\C_2^j$ by Lemma~\ref{l.duality_cones}. 

Due to $\underline x\in \itr(\C_2^j)$, we have $\underline x+\C_2^j\subset \itr(\C_2^j)$. By the Portmanteau theorem, we have
\begin{align*}
    \liminf_{\eta,\eps,r\to0} \bsigma^{\eta,\eps}_{j,(t_{\eta,\eps,r},x_{\eta,\eps,r}),r}\Ll(\itr(\C_2^j)\Rr) \geq \gamma\Ll(\itr(\C_2^j)\Rr)\geq \gamma\Ll(\underline x+\C_2^j\Rr)=1,
\end{align*}
which implies $\lim_{\eta,\eps,r\to0} \bsigma^{\eta,\eps}_{j,(t_{\eta,\eps,r},x_{\eta,\eps,r}),r}(\C_2^j)=1$. This along with the discussion in the beginning completes the proof.
\end{proof}

\section{Proofs of main results}\label{s.proofs}

In this section, we prove Theorems~\ref{t} and~\ref{t2}. First, we combine results from previous sections to prove Proposition~\ref{p.2nd_eq_diff_pt} which resembles Theorem~\ref{t2} but involves approximations $f_{j,\eta,\eps}$ (see~\eqref{e.f_j,eta,eps=}) instead of $f_j$ (see~\eqref{e.f_j=}). Then, we use Proposition~\ref{p.2nd_eq_diff_pt} to prove Theorem~\ref{t2} and then deduce Theorem~\ref{t} from Theorem~\ref{t2}.

\begin{proposition}\label{p.2nd_eq_diff_pt}
We assume the setup in Theorem~\ref{t}. Let $(t,q) \in [0,\infty)\times \mcl Q_2$, let $\seq$ be a strictly increasing subsequence of $\N$, and let $(t_{j,\eta,\eps}, x_{j,\eta,\eps})_{j\in \seq,\eta>0,\eps>0}$ be a sequence satisfying the following assumptions:
\begin{enumerate}[label=(\roman*)]
    \item \label{i.p_i} for every $j\in\seq$, there is $\underline{x}_j \in \itr\Ll(\mcl Q^j_2\Rr)$ such that $(t_{j,\eta,\eps},x_{j,\eta,\eps}) \in (0,\infty) \times \Ll(\underline{x}_j + \mcl Q^j_2\Rr)$ for sufficiently small $\eta,\eps>0$;
    \item\label{i.p_ii} we have
    \begin{align*}
        \lim_{j\in\seq} \limsup_{\eta\to0}\limsup_{\eps\to0} \Ll|\Ll(t_{j,\eta,\eps}, \lf_j x_{j,\eta,\eps}\Rr) - (t, q)\Rr|_{\R\times L^2} =0;
    \end{align*}
    \item\label{i.p_iii} there is $(a,p)\in\R\times L^2$ such that
    \begin{align*}
        \lim_{j\in\seq} \limsup_{\eta\to0}\limsup_{\eps\to0} \Ll|\Ll(\partial_t ,\bnabla\Rr) f^\uparrow_{j,\eta,\eps}\Ll(t_{j,\eta,\eps}, \lf_j x_{j,\eta,\eps}\Rr) - (a,p)\Rr|_{\R\times L^2} =0.
    \end{align*}
\end{enumerate}
Then, there is a Borel probability measure $\gamma_{t,{q}}$ on $\C_2$ such that
\begin{gather}\label{e.p}
\begin{split}
    f(t,{q}) - \la {q}, p\ra_\cH -ta = \int_{\C_2} \psi({q'}) -\la{q'}, \bnabla\psi({q'})\ra_{\cH}  \d \gamma_{t,{q}}({q'}),
    \\
    a = \int_{\C_2} \bxi\left(\bnabla\psi({q'})\right)\d \gamma_{t,{q}}({q'}),\qquad p = \int_{\C_2} \bnabla\psi({q'}) \d \gamma_{t,{q}}({q'}).
\end{split}
\end{gather}
\end{proposition}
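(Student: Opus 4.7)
The plan is to start from the envelope identities of Lemma~\ref{l.visc_app} for the viscous approximations $f_{j,\eta,\eps}$ evaluated at the given points $(t_{j,\eta,\eps},x_{j,\eta,\eps})$ with a small radius $r>0$, and then take successive limits in $\eps$, $\eta$, $r$, and $j$ while controlling the associated measures $\bsigma^{\eta,\eps}_{j,(t_{j,\eta,\eps},x_{j,\eta,\eps}),r}$. After pushing these forward by the lift $\lf_j$ to obtain probability measures $\gamma_{j,\eta,\eps,r}$ on $L^2$, I would use Lemma~\ref{l.1stmomet} together with assumption~\ref{i.p_ii} to get a uniform bound on the first moment $\int_{L^2}|q'|_{L^2}\,\d\gamma_{j,\eta,\eps,r}(q')$, invoke the tightness criterion of Lemma~\ref{l.weak_cvg}, and extract a weakly convergent sub-subsequence via a diagonal argument (sending $\eps\to 0$, then $\eta\to 0$, then $r\to 0$, then $j\to\infty$ along $\seq$) whose limit I denote by $\gamma_{t,q}$.

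Before passing to the limit in the three envelope identities, I would verify that $\gamma_{t,q}$ is supported on $\mcl Q_2$. Assumption~\ref{i.p_i} makes Lemma~\ref{l.supp_cone} applicable: for each fixed $j$, the mass of $\bsigma^{\eta,\eps}_{j,(t_{j,\eta,\eps},x_{j,\eta,\eps}),r}$ outside $\mcl Q^j_2$ vanishes as $\eta,\eps,r\to 0$. Combined with $\lf_j(\mcl Q^j_2)\subset\mcl Q_2$ from Lemma~\ref{l.proj_cones} and the fact that $\mcl Q_2$ is closed in $L^2$, the Portmanteau theorem then yields $\gamma_{t,q}(\mcl Q_2)=1$.

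Next I would pass to the limit in the three identities of Lemma~\ref{l.visc_app}. On the left-hand sides, the average over $C_j(t_{j,\eta,\eps},x_{j,\eta,\eps},r)$ of $f_{j,\eta,\eps}$, $\partial_t f_{j,\eta,\eps}$, and $\nabla f_{j,\eta,\eps}$ can be replaced by their pointwise values at $(t_{j,\eta,\eps},x_{j,\eta,\eps})$, up to errors vanishing as $r\to 0$, using equi-Lipschitz bounds (Lemma~\ref{l.bound_der_f_j} transferred to $f_{j,\eta,\eps}$ via Lemmas~\ref{l.f_j,0_cvg_f_j}, \ref{l.f_j,eta_cvg_f_j,0}, and~\ref{l.f_j,eta,eps_to_f_j,eta}). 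Assumption~\ref{i.p_iii} and the isometry of $\lf_j$ (Lemma~\ref{l.basics_(j)}~\eqref{i.isometric}) then identify these pointwise values with $f(t,q)$, $a$, $p$, and $\la q,p\ra_{L^2}$ in the limit, together with Lemma~\ref{l.f_j_approx} to identify $f^\uparrow_j$ with $f$. On the right-hand sides, since $\tilde{\psi^j}$ coincides with $\psi^j$ on $\mcl Q^j_2$ by Lemma~\ref{l.ext} and the measures asymptotically concentrate on the cone, the integrands can be replaced by their values on $\mcl Q^j_2$. Then Lemma~\ref{l.derivatives}~\eqref{i.char_nabla_j} identifies $\nabla\psi^j(\pj_j q')$ with $\pj_j\bnabla\psi(\lf_j\pj_j q')$, and Lemma~\ref{l.basics_(j)}~\eqref{i.cvg} gives $\lf_j\pj_j q'\to q'$ in $L^2$ for $q'\in\mcl Q_2$. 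Continuity and boundedness of $\bnabla\psi$ on $\mcl Q_2$ (assumption~\ref{i.A}) then allow passage to the limit in the integrals against $\gamma_{t,q}$.

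The main obstacle will be the terms on the right-hand side of Lemma~\ref{l.visc_app} containing $\la y,\nabla\tilde{\psi^j}(y)\ra_{\cL^j}$, which pair an unbounded factor $y$ with the gradient of the extension $\tilde{\psi^j}$ whose Lipschitz coefficient from Lemma~\ref{l.ext} may grow with $j$; a naive weak-convergence argument is therefore insufficient. The resolution is that the asymptotic concentration provided by Lemma~\ref{l.supp_cone}, combined with the uniform second-moment bound of Lemma~\ref{l.2ndmomet}, reduces all relevant estimates to integrals over $\mcl Q^j_2$, where $\tilde{\psi^j}=\psi^j$ and $\nabla\tilde{\psi^j}$ is bounded uniformly in $j$ by assumption~\ref{i.A}. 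A uniform integrability argument based on the second moment bound then justifies passing to the limit in $\int\la q',\bnabla\psi(q')\ra_{L^2}\,\d\gamma_{j,\eta,\eps,r}(q')$, after which the three identities of~\eqref{e.p} follow by matching the limits term by term.
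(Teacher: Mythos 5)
Your proposal is essentially the paper's argument, with the same structure and the same key lemmas in the same roles (Lemma~\ref{l.visc_app} for the envelope identities; Lemmas~\ref{l.1stmomet}, \ref{l.2ndmomet} for moment bounds; Lemma~\ref{l.supp_cone} driven by assumption~\ref{i.p_i} to concentrate on the cone; Lemma~\ref{l.weak_cvg} for tightness; Lemmas~\ref{l.f_j_approx}, \ref{l.f_j,0_cvg_f_j}, \ref{l.f_j,eta_cvg_f_j,0}, \ref{l.f_j,eta,eps_to_f_j,eta} to identify the left-hand side limits; and the observation that the uncontrollable Lipschitz constant of $\tilde{\psi^j}$ only matters off the cone). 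You also correctly identify the central obstacle and its resolution. A few organizational remarks on where the paper streamlines things you left at a heuristic level.

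First, rather than iterated limits $\eps\to0$, $\eta\to0$, $r\to0$, $j\to\infty$, the paper picks a single vanishing sequence $(\eta_j,\eps_j,r_j)_{j}$ simultaneously with $j$, choosing it small enough to \emph{quantitatively} enforce $\bsigma_{j,\eta_j,\eps_j,r_j}(\cL^j\setminus\mcl Q^j_2)\leq j^{-1}(1\wedge C_j^{-1})$, where $C_j$ is the (possibly growing) Lipschitz constant of the extension $\tilde{\psi^j}$. This explicit rate is what makes the off-cone contribution vanish despite $C_j\to\infty$; Lemma~\ref{l.supp_cone} alone gives vanishing but not a rate, so you must select the diagonal with $C_j$ in hand, not before.

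Second, the paper does not push forward $\bsigma$ directly; it truncates to the cone and renormalizes to get $\tilde\bsigma$, so the pushed-forward $\gamma_j$ are supported on $\mcl Q_2$ for each $j$ and the support of the weak limit is immediate. Your Portmanteau route works but you must still introduce the truncation/renormalization error explicitly in the right-hand side integrals; otherwise the off-cone piece of $\tilde{\mathsf{R}}^i_j$ is unbounded.

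Third, your justification for replacing the space-time averages $\fint_{C_j(\cdot,r)}$ by pointwise values invokes Lemma~\ref{l.bound_der_f_j}, which concerns $f_j$, and attempts to ``transfer'' it to $f_{j,\eta,\eps}$; local uniform convergence of $f_{j,\eta,\eps}\to f_j$ does not transfer derivative bounds. The paper instead uses the smoothness/continuity of $f_{j,\eta,\eps}$ at fixed $j,\eta,\eps$ and chooses $r_j$ small so that the average is close to the pointwise value, then appeals to assumptions~\ref{i.p_ii}, \ref{i.p_iii} and the convergence lemmas to identify the limit. This is simpler and avoids the gap.

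Finally, the passage from $\mathsf{R}^i_j$ to $\mathsf{R}^i_\infty$ on the cone requires the quantitative $L^1$ approximation of Lemma~\ref{l.L^1cvg} applied to $\bnabla\psi(q')$ (uniformly in $q'$ thanks to $|\bnabla\psi|_{L^\infty}\le 1$), not merely the qualitative $\kappa^{(j)}\to\kappa$ of Lemma~\ref{l.basics_(j)}~\eqref{i.cvg}; and for the third identity ($L^2$-valued integrand) one expands the square and uses weak convergence of product measures rather than truncation. None of these affects the correctness of your overall plan.
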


One can interpret $\underline{x}_j$ in \ref{i.p_i} as a uniform lower bound on $x_{j,\eta,\eps}$ in the partial order induced by $\mcl Q^j_2$ so that $x_{j,\eta,\eps}$ stays strictly positive uniformly in $\eta$ and $\eps$.
In~\ref{i.p_iii}, $f_{j,\eta,\eps}$ is given in~\eqref{e.f_j,eta,eps=} and its lift $f^\uparrow_{j,\eta,\eps}$ is defined as in~\eqref{e.lift=}.

\begin{proof}
Since no properties on $\seq$ are required for this proof except for the strict monotonicity, we assume $\seq =\N$ for simplicity.
In \rv{the} following, if not specified, $\eta,\eps,r$ are taken in $(0,1)$.

\textbf{Step~0.}
We introduce notation and derive some basic estimates. The key estimate is~\eqref{e.L-R=0}.
For $\eta,\eps,r\in(0,1)$, we write
\begin{gather*}\bsigma_{j,\eta,\eps,r} =\bsigma^{\eta,\eps}_{j,(t_{j,\eta,\eps},x_{j,\eta,\eps}),r},\qquad  C_{j,\eta,\eps,r} = C_j(t_{j,\eta,\eps},x_{j,\eta,\eps},r)
\end{gather*}
which are defined in~\eqref{e.sigma^eps_j,(t,x),r} and~\eqref{e.C()}, respectively.
Applying Lemma~\ref{l.visc_app} to each fixed $j$ (with $R$ therein taken to satisfy $R>1$ and $R>t_{j,\eta,\eps}$ for all sufficiently small $\eta,\eps$) by substituting $t_{j,\eta,\eps},x_{j,\eta,\eps},t',x'$ for $\underline t,\underline x,t,x$ therein, we can get
\begin{align}\label{e.evans}
\begin{cases}
    \begin{aligned}
        &\fint_{C_{j,\eta,\eps,r}} f_{j,\eta,\eps} - \la x', \nabla f_{j,\eta,\eps}\ra_{\cL^j} -t' \partial_tf_{j,\eta,\eps} 
        \\
        &\qquad\qquad\qquad= \int_{\cL^j} \tilde{\psi^j}(y) - \la y,\nabla\tilde{\psi^j}(y)\ra_{\cL^j} \d \bsigma_{j,\eta,\eps,r}(y)
        +o^j_\eps(1),
    \\
    &\fint_{C_{j,\eta,\eps,r}} \partial_t f_{j,\eta,\eps} = \int_{\cL^j}\H^j(\nabla\tilde{\psi^j}(y))\d \bsigma_{j,\eta,\eps,r}(y) + o^j_\eta(1),
\\
    &\fint_{C_{j,\eta,\eps,r}} \nabla f_{j,\eta,\eps} = \int_{\cL^j}\nabla\tilde{\psi^j}(y)\d \bsigma_{j,\eta,\eps,r}(y) ,\end{aligned}
\end{cases}
\end{align}
where $f_{j,\eta,\eps}=f_{j,\eta,\eps}(t',x')$ and $\fint_{C_{j,\eta,\eps,r}}$ is over $(t',x')$. The error \rv{term} $ o^j_\eps(1)$ is independent of $\eta,r$ and $ o^j_\eta(1)$ is independent of $\eps,r$.  They satisfy $\lim_{\eps\to0} o^j_\eps(1)=0$ and $\lim_{\eta\to0} o^j_\eta(1)=0$.
Then, we introduce shorthand notation for integrands in \eqref{e.evans}:
\begin{gather*}
    \mathsf{L}^1_{j,\eta,\eps}(s,{q'}) = f^\uparrow_{j,\eta,\eps}(s,{q'}) - \la {q'}^\j, \bnabla f_{j,\eta,\eps}^\uparrow(s,{q'})\ra_{\cH} -t\partial_tf^\uparrow_{j,\eta,\eps}(s,{q'}),
    \\
    \mathsf{L}^2_{j,\eta,\eps}(s,{q'}) = \partial_tf^\uparrow_{j,\eta,\eps}(t,{q'}),\qquad \mathsf{L}^3_{j,\eta,\eps}(s,{q'}) = \bnabla f_{j,\eta,\eps}^\uparrow(t,{q'}),
    \\
    \tilde{\mathsf{R}}^1_{j}({q'}) = \tilde{\psi^j}\Ll(\pj_j{q'}\Rr) - \la {q'},\lf_j\nabla\tilde{\psi^j}\Ll(\pj_j{q'}\Rr)\ra_{\cH},
    \\
    \tilde{\mathsf{R}}^2_{j}({q'}) = \H\Ll(\lf_j\nabla \tilde{\psi^j}\Ll(\pj_j{q'}\Rr)\Rr),\qquad \tilde{\mathsf{R}}^3_{j}({q'}) = \lf_j\nabla \tilde{\psi^j}\Ll(\pj_j{q'}\Rr).
\end{gather*}
Here, $\mathsf{L}^i_{j,\eta,\eps}$ and $\tilde{\mathsf{R}}^i_{j}$ corresponds, respectively, to the left- and (the main term in) the right-hand side of the $i$-th equation in~\eqref{e.evans}.
Notice that, by~\eqref{e.kappa^(j)} and Lemma~\ref{l.basics_(j)}~\eqref{i.pj_lf=ID}, we have
\begin{align}\label{e.(lx)^j=lx}
    \Ll(\lf_j x'\Rr)^\j = \lf_j x',\quad\forall x'\in \cL^j; \qquad \pj_j\lf_j y  = y,\quad\forall y \in \cL^j.
\end{align}
By Lemma~\ref{l.basics_(j)}~\eqref{i.isometric} and Lemma~\ref{l.derivatives}~\eqref{i.lift_gradient}, we have
\begin{align}\label{e.<>_L^2=<>_L^j}
\begin{split}
    \la \lf_j x', \bnabla f^\uparrow_{j,\eta,\eps}(s,\lf_jx')\ra_{L^2}= \la x', \nabla f_{j,\eta,\eps}(s, x')\ra_{\cL^j}',\quad&\forall (s,x')\in \R\times\cL^j;
    \\
    \la \lf_j y,\, \lf_j \nabla \tilde{\psi^j}(\pj_j\lf_j y)\ra_{L^2} = \la y,  \nabla \tilde{\psi^j}( y)\ra_{\cL^j},\quad&\forall y\in \cL^j.
\end{split}
\end{align}
We write
\begin{align*}
    \text{$|\cdot|_i = |\cdot|_\R$ for $i\in\{1,2\}$ \qquad and \qquad $|\cdot|_3 = |\cdot|_\cH$}.
\end{align*}
Then, we can rewrite~\eqref{e.evans} as
\begin{align*}\Ll|\fint_{C_{j,\eta,\eps,r}}\mathsf{L}^i_{j,\eta,\eps}(\cdot, \lf_j\cdot) - \int_{\cL^j}\tilde{\mathsf{R}}^i_j(\lf_j\cdot) \d \bsigma_{j,\eta,\eps,r} \Rr|_i \leq o^j_\eps(1)+o^j_\eta(1) ,\quad\forall i\in\{1,2,3\}.
\end{align*}
Let us explain this. The relation for $i=1$ follows from~\eqref{e.(lx)^j=lx}, \eqref{e.<>_L^2=<>_L^j}, and the definition of the lift of functions in~\eqref{e.lift=}. 
The relation for $i=2$ follows from~\eqref{e.(lx)^j=lx} and the definition of the projection of functions in~\eqref{e.j-projection}.
The relation for $i=3$ follows from Lemma~\ref{l.derivatives}~\eqref{i.lift_gradient} and~\eqref{e.(lx)^j=lx}.
This allows us to choose a vanishing sequence $(\eta_j,\eps_j,r_j)_{j\in\N}$ such that
\begin{align}\label{e.L-R=0}
    \lim_{j\to\infty}\sum_{i=1}^3\Ll|\fint_{C_{j,\eta_j,\eps_j,r_j}}\mathsf{L}^i_{j,\eta_j,\eps_j}(\cdot, \lf_j\cdot) - \int_{\cL^j}\tilde{\mathsf{R}}^i_j(\lf_j\cdot) \d \bsigma_{j,\eta_j,\eps_j,r_j} \Rr|_i=0.
\end{align}
Later, if necessary, we will choose smaller values for $\eta_j,\eps_j,r_j$.
We want to compare the terms in~\eqref{e.L-R=0} with the terms in~\eqref{e.p}. We start with the left-hand sides.

\textbf{Step~1.} 
We want to show
\begin{align}\label{e.limsupL}
    \lim_{j\to\infty}\sum_{i=1}^3\Ll|\fint_{C_{j,\eta_j,\eps_j,r_j}}\mathsf{L}^i_{j,\eta_j,\eps_j}(t',\lf_j x') -\mathsf{L}^i_\infty\Rr|_i=0,
\end{align}
where 
\begin{align}\label{e.L_infty=}
    \mathsf{L}^1_\infty = f(t,{q}) - \la {q}, p\ra_\cH -ta,\quad\mathsf{L}^2_\infty = a,\quad \mathsf{L}^3_\infty =p
\end{align}
correspond to terms on the left-hand sides in~\eqref{e.p}.
Using assumptions~\ref{i.p_ii} and~\ref{i.p_iii}, the local uniform convergence of $f^\uparrow_{j,\eta,\eps}$ to $f$ (see Lemmas~\ref{l.f_j_approx}, \ref{l.f_j,0_cvg_f_j}, \ref{l.f_j,eta_cvg_f_j,0}, \ref{l.f_j,eta,eps_to_f_j,eta}) and~\eqref{e.(lx)^j=lx}, we can choose smaller $\eta_j,\eps_j$ so that
\begin{align}
    \lim_{j\to\infty}\sum_{i=1}^3\Ll|\mathsf{L}^i_{j,\eta_j,\eps_j}(t_{j,\eta_j,\eps_j},\lf_j x_{j,\eta_j,\eps_j}) - \mathsf{L}^i_\infty\Rr|_i=0 .\label{e.limsupL^1}
\end{align}
Then, using the continuity of $\mathsf{L}^i_{j,\eta_j,\eps_j}(\cdot,\lf_j\cdot)$ for each $j$, we can choose smaller $r_j$ so that 
\begin{align*}
    \lim_{j\to\infty}\sum_{i=1}^3\Ll|\fint_{C_{j,\eta_j,\eps_j,r_j}}\mathsf{L}^i_{j,\eta_j,\eps_j}(t',\lf_j x') -\mathsf{L}^i_{j,\eta_j,\eps_j}(t_{j,\eta_j,\eps_j},\lf_j x_{j,\eta_j,\eps_j})\Rr|_i=0
\end{align*}
which along with~\eqref{e.limsupL^1} yields~\eqref{e.limsupL}. In the following, we turn to the terms on the right-hand sides in the next three steps.

\textbf{Step~2.} 
Setting $\tilde\bsigma_{j,\eta,\eps,r} = \frac{1}{\bsigma_{j,\eta,\eps,r}(\C_2^j)} \bsigma_{j,\eta,\eps,r}(\cdot\cap \C_2^j)$, we want to show
\begin{align}\label{e.tR-R}
    \lim_{j\to\infty}\sum_{i=1}^3\Ll|\int_{\cL^j}\tilde{\mathsf{R}}^i_j(\lf_j\cdot)\d \bsigma_{j,\eta_j,\eps_j,r_j} - \int_{\C_2^j}\mathsf{R}^i_j(\lf_j\cdot)\d \tilde\bsigma_{j,\eta_j,\eps_j,r_j}\Rr|_i=0,
\end{align}
where
\begin{gather*}
    \mathsf{R}^1_{j}({q'}) = \psi\Ll({q'}\Rr) - \la {q'},\Ll(\bnabla\psi({q'})\Rr)^\j\ra_{\cH},
    \\
    \mathsf{R}^2_{j}({q'}) = \bxi\Ll(\Ll(\nabla \psi\Ll({q'}\Rr)\Rr)^\j\Rr),\qquad \mathsf{R}^3_{j}({q'}) = \Ll(\nabla \psi\Ll({q'}\Rr)\Rr)^\j.
\end{gather*}
Since $\H$ is Lipschitz as in~\ref{i.d2} (below~\eqref{e.H=}), by the Lipschitzness of $\bnabla\psi$ in~\ref{i.A} along with Lemma~\ref{l.ext} (and Lemma~\ref{l.basics_(j)}~\eqref{i.isometric} and~\eqref{i.|iota^j|<|iota|}), we have
\begin{align}\label{e.tR<}
    \sum_{i=1}^3\Ll|\tilde{\mathsf{R}}^i_j({q'})\Rr|_i\leq C_j(1+|{q'}|_\cH),\quad\forall {q'}\in\cH,
\end{align}
for some constant $C_j>0$ depending on $j$ (due to Lemma~\ref{l.ext}).
By assumption~\ref{i.p_i} and Lemma~\ref{l.supp_cone} (with $\underline{x}_j$ substituted for $\underline x$), we have can make $\eta_j, \eps_j, r_j$ smaller so that
\begin{align}\label{e.supsigma}
    \bsigma_{j,\eta_j,\eps_j,r_j}\Ll(\cL^j\setminus\C_2^j\Rr) \leq j^{-1}\Ll(1\wedge C_j^{-1}\Rr),\quad\forall j\in\N.
\end{align}
Then, writing $\bsigma_{j,\cdots} = \bsigma_{j,\eta_j,\eps_j,r_j}$ and $\tilde\bsigma_{j,\cdots} = \tilde\bsigma_{j,\eta_j,\eps_j,r_j}$, we have
\begin{align*}
    &\Ll|\int_{\cL^j}\tilde{\mathsf{R}}^i_j(\lf_j\cdot)\d \bsigma_{j,\cdots} - \int_{\C_2^j}\tilde{\mathsf{R}}^i_j(\lf_j\cdot)\d \tilde\bsigma_{j,\cdots}\Rr|_i 
    \\
    &\leq \int_{\cL^j\setminus\C_2^j}\Ll|\tilde{\mathsf{R}}^i_j(\lf_j\cdot)\Rr|_i \d \bsigma_{j,\cdots} +  \Ll|\int_{\C_2^j}\tilde{\mathsf{R}}^i_j(\lf_j\cdot) \d \bsigma_{j,\cdots} - \int_{\C_2^j}\tilde{\mathsf{R}}^i_j(\lf_j\cdot) \d\tilde \bsigma_{j,\cdots}\Rr|,
    \\
    & \leq \Ll(\bsigma_{j,\cdots}(\cL^j\setminus\C_2^j)\Rr)^\frac{1}{2}\Ll(\int_{\C_2^j}\Ll|\tilde{\mathsf{R}}^i_j(\lf_j\cdot)\Rr|^2_i \d \bsigma_{j,\cdots}\Rr)^\frac{1}{2} + \Ll(\frac{1}{\bsigma_{j,\cdots}(\C_2^j)}-1\Rr) \int_{\C_2^j}\Ll|\tilde{\mathsf{R}}^i_j(\lf_j\cdot)\Rr|_i \d \bsigma_{j,\cdots}
\end{align*}
where we used the Cauchy-Schwarz inequality on the third line.
Using~\eqref{e.tR<}, the second moment bound in Lemma~\ref{l.2ndmomet}, and~\eqref{e.supsigma}, we can bound the above by
\begin{align*}
    C\Ll(\Ll(\bsigma_{j,\eta_j,\eps_j,r_j}(\cL^j\setminus\C_2^j)\Rr)^\frac{1}{2}C_j^\frac{1}{2} + \frac{\bsigma_{j,\eta_j,\eps_j,r_j}(\cL^j\setminus\C_2^j)}{1-\bsigma_{j,\eta_j,\eps_j,r_j}(\cL^j\setminus\C_2^j)} C_j\Rr) = o(1)
\end{align*}
for some $C$ independent of $j$, uniformly as $j\to\infty$. Therefore, we get
\begin{align}\label{e.tR-tR=0}
    \lim_{j\to\infty} \Ll|\int_{\cL^j}\tilde{\mathsf{R}}^i_j(\lf_j\cdot)\d \bsigma_{j,\eta_j,\eps_j,r_j} - \int_{\C_2^j}\tilde{\mathsf{R}}^i_j(\lf_j\cdot)\d \tilde\bsigma_{j,\eta_j,\eps_j,r_j}\Rr|_i  =0 .
\end{align}

For $y\in\C_2^j$, we have $\pj_j\lf_j y = y$ due to Lemma~\ref{l.basics_(j)}~\eqref{i.pj_lf=ID} and $\lf_j \nabla \psi^j(y) = (\bnabla\psi(\lf_j y))^\j$ due to Lemma~\ref{l.derivatives}~\eqref{i.char_nabla_j}. By this, $\tilde{\psi^j}=\psi^j$ on $\C_2^j$, and $\H((\bnabla\psi(\lf_j y))^\j)=\bxi((\bnabla\psi(\lf_j y))^\j)$\footnote{By~\ref{i.A}, we have $\bnabla\psi(\lf_j y) \in \mcl Q_\infty\subset \mcl Q_2$, which by~\eqref{e.kappa^(j)} and Lemma~\ref{l.proj_cones} implies $(\bnabla\psi(\lf_j y))^\j\in\mcl Q_2$. Assumption~\ref{i.A} also ensures $|\bnabla\psi(\lf_j y)|_{L^\infty}\leq1$. In view of~\eqref{e.kappa^(j)}, we also have $|(\bnabla\psi(\lf_j y))^\j|_{L^\infty}\leq1$. Hence, property~\ref{i.c1} of $\bar\xi$ (above~\eqref{e.H=}) gives $\bar \xi ((\bnabla\psi(\lf_j y))^\j) = \xi ((\bnabla\psi(\lf_j y))^\j)$. Then, property \ref{i.d1} of $\H$ (below~\eqref{e.H=}) yields $\H((\bnabla\psi(\lf_j y))^\j)=\bxi((\bnabla\psi(\lf_j y))^\j)$.}, we have
\begin{align*}\tilde{\mathsf{R}}^i_j(\lf_j\cdot) = \mathsf{R}^i_j(\lf_j\cdot)\quad\text{on $\C_2^j$,}\qquad \forall i\in\{1,2,3\},\ j\in\N.
\end{align*}
Hence, \eqref{e.tR-tR=0} becomes~\eqref{e.tR-R}.

\textbf{Step~3.}
Setting $\gamma_j = \tilde\bsigma_{j,\eta_j,\eps_j,r_j}\circ \lf_j^{-1}$, we want to show
\begin{align}\label{e.tRsigma=R_infty_gamma_j}
    \lim_{j\to\infty}\sum_{i=1}^3\Ll|\int_{\mcl Q^j_2}\mathsf{R}^i_j(\lf_j\cdot)\d \tilde\bsigma_{j,\eta_j,\eps_j,r_j} - \int_{\C_2}\mathsf{R}^i_\infty \d \gamma_j\Rr|_i=0,
\end{align}
where
\begin{align}\label{e.R_infty=}
    \mathsf{R}^1_\infty({q'}) =  \psi({q'}) -\la{q'}, \bnabla\psi({q'})\ra_{\cH}, \quad
    \mathsf{R}^2_\infty({q'}) =  \bxi\left(\bnabla\psi({q'})\right),\quad \mathsf{R}^3_\infty({q'}) = \bnabla\psi({q'}) 
\end{align}
correspond to the terms on the right-hand sides in~\eqref{e.p}.
Since $\lf_j^{-1}(\C_2 )=\pj_j(\C_2) =\C_2^j$ due to Lemma~\ref{l.basics_(j)}~\eqref{i.pj_lf=ID} and Lemma~\ref{l.proj_cones}, we have
\begin{align}\label{e.gamma_j(Q_2)=1}
    \gamma_j(\mcl Q_2) =1,\quad\forall j\in\N.
\end{align}
The definition of $\gamma_j$ along with~\eqref{e.tR-R} and~\eqref{e.gamma_j(Q_2)=1} gives
\begin{align}\label{e.tilde_R_j-R_j}
    \lim_{j\to\infty}\sum_{i=1}^3\Ll|\int_{\mcl Q^j_2}\mathsf{R}^i_j(\lf_j\cdot)\d \tilde\bsigma_{j,\eta_j,\eps_j,r_j} - \int_{\C_2}\mathsf{R}^i_j \d \gamma_j\Rr|_i=0.
\end{align}
Then, \eqref{e.tRsigma=R_infty_gamma_j} follows from \eqref{e.tilde_R_j-R_j} and the display below:
\begin{align}\label{e.R_j-R_infty}
    \lim_{j\to\infty}\sum_{i=1}^3\Ll|\int_{\C_2} \mathsf{R}^i_j \d \gamma_j  - \int_{\C_2}\mathsf{R}^i_\infty \d \gamma_j\Rr|_i=0.
\end{align}
Let us verify~\eqref{e.R_j-R_infty}.
Lemma~\ref{l.2ndmomet} together with~\eqref{e.supsigma} implies
\begin{align}\label{e.gamma_j_1st_mom}
    \sup_{j\in\N}\int_{\cH}|{q'}|^2_{\cH}\d\gamma_j({q'}) =\sup_{j\in\N}\int_{\cL^j}|y|^2_{\cL^j}\d \tilde\bsigma_{j,\eta_j,\eps_j,r_j}(y)<\infty.
\end{align}
By Lemma~\ref{l.L^1cvg} and~\ref{i.A}, there is some absolute constant $C$ such that, for every ${q'}\in\C_2$,
\begin{align*}\Ll|\Ll(\nabla \psi({q'})\Rr)^\j-\bnabla\psi({q'})\Rr|_\cH^2 \leq 2\Ll|\bnabla\psi({q'})\Rr|_{L^\infty}\Ll|\Ll(\nabla \psi({q'})\Rr)^\j-\bnabla\psi({q'})\Rr|_{L^1} \leq  C2^{-\frac{j}{2}}.
\end{align*}
Using this, the straightforward estimate (for $i=2$, we use the local Lipschitzness of $\xi$)
\begin{align*}
    \sum_{i=1}^3\Ll|\mathsf{R}^i_j - \mathsf{R}^i_\infty\Rr|_i\leq C(|{q'}|_\cH+1)\Ll|\Ll(\nabla \psi({q'})\Rr)^\j-\bnabla\psi({q'})\Rr|_\cH, 
\end{align*}
(for some other constant $C>0$) and~\eqref{e.gamma_j_1st_mom}, we can get~\eqref{e.R_j-R_infty}.

Lemma~\ref{l.weak_cvg} along with~\eqref{e.gamma_j_1st_mom} implies that, by passing to a subsequence, we can assume that $\gamma_j$ converges weakly to some $\gamma_\infty$. By~\eqref{e.gamma_j(Q_2)=1} and the Portmanteau theorem, we have
\begin{align}\label{e.gamma_infty(Q_2)=1}
    \gamma_\infty(\mcl Q_2) =1.
\end{align}

\textbf{Step~4.}
We want to show
\begin{align}\label{e.R_infty-R_infty}
    \lim_{j\to\infty}\Ll|\int_{\C_2} \mathsf{R}^i_\infty \d \gamma_j  - \int_{\C_2}\mathsf{R}^i_\infty \d \gamma_\infty\Rr|_i=0,\quad\forall i\in\{1,2,3\}.
\end{align}
For $M>0$ to be chosen, we set $\mathsf{R}^{i,M}_\infty = (\mathsf{R}^i_\infty \wedge M)\vee(-M)$ for $i\in\{1,2\}$. We split
\begin{align*}
    \Ll|\int_{\C_2} \mathsf{R}^i_\infty \d \gamma_j  - \int_{\C_2}\mathsf{R}^i_\infty \d \gamma_\infty\Rr|_i
    \leq \sum_{l\in\{j,\infty\}}  \Ll|\int_{\C_2} \mathsf{R}^i_\infty  - \mathsf{R}^{i,M}_\infty \d \gamma_l \Rr|_i
    +\Ll|\int_{\C_2} \mathsf{R}^{i,M}_\infty \d \gamma_j  - \int_{\C_2}\mathsf{R}^{i,M}_\infty \d \gamma_\infty\Rr|_i.
\end{align*}
By~\eqref{e.gamma_j_1st_mom}, we have $\int_{\cH}|{q'}|_{L^2}^2\d \gamma_\infty({q'})<\infty$ (see \cite[Theorem~25.11]{billingsley2012probability}).
Using this,~\eqref{e.gamma_j_1st_mom}, and $|\mathsf{R}^i_\infty|_i \leq C(1+|{q'}|_\cH)$ due to~\ref{i.A}, we have, for $l\in\{j,\infty\}$ and $i\in\{1,2\}$,
\begin{align*}
    \Ll|\int_{\C_2} \mathsf{R}^i_\infty -\mathsf{R}^{i,M}_\infty \d \gamma_l \Rr|_i = \rv{\Ll|\int_{\C_2} \Ll(\mathsf{R}^i_\infty -\mathsf{R}^{i,M}_\infty \Rr)\mathds{1}_{|\mathsf{R}^i_\infty|_i>M} \d \gamma_l \Rr|_i}
    \leq \int_{\C_2} \Ll|\mathsf{R}^i_\infty - \mathsf{R}^{i,M}_\infty\Rr|_i \mathds{1}_{|\mathsf{R}^i_\infty|_i>M} \d \gamma_l
    \\
    \leq 2\int_{\C_2}\Ll|\mathsf{R}^i_\infty\Rr| \mathds{1}_{|\mathsf{R}^i_\infty|_i>M} \d \gamma_l
    \leq 2M^{-1}\int_{\C_2} \Ll|\mathsf{R}^i_\infty \Rr|_i^2 \d \gamma_l 
    \leq CM^{-1}.
\end{align*}
By this and the previous display,
choosing $M$ sufficiently large and using the weak convergence of $\gamma_j$, we get~\eqref{e.R_infty-R_infty} for $i\in \{1,2\}$.

For $i=3$, we expand the square
\begin{align*}
    \Ll|\int_{\C_2} \mathsf{R}^3_\infty \d \gamma_j  - \int_{\C_2}\mathsf{R}^3_\infty \d \gamma_\infty\Rr|^2_\cH.
\end{align*}
From the assumption~\ref{i.A} on $\psi$, we can see that the function $({q'},q'')\mapsto \la \mathsf{R}^3_\infty({q'}), \mathsf{R}^3_\infty(q'')\ra_\cH$ is bounded and Lipschitz. It is straightforward that $\gamma_j\otimes \gamma_j$ and $\gamma_j\otimes \gamma_\infty$ converge weakly to $\gamma_\infty\otimes \gamma_\infty$. Therefore, we obtain \eqref{e.R_infty-R_infty} for $i=3$.

\textbf{Conclusion}.
Combining~\eqref{e.tR-R},~\eqref{e.tRsigma=R_infty_gamma_j} and~\eqref{e.R_infty-R_infty}, we get
\begin{align*}
    \lim_{j\to\infty}\sum_{i=1}^3\Ll|\int_{\cL^j}\tilde{\mathsf{R}}^i_j(\lf_j\cdot)\d \bsigma_{j,\eta_j,\eps_j,r_j} - \int_{\C_2}\mathsf{R}^i_\infty \d \gamma_\infty\Rr|_i=0.
\end{align*}
Using this, \eqref{e.L-R=0}, and~\eqref{e.limsupL}, we obtain
\begin{align*}
    \mathsf{L}^i_\infty = \int_{\C_2}\mathsf{R}^i_\infty \d \gamma_\infty,\quad i\in\{1,2,3\},
\end{align*}
for $\mathsf{L}^i_\infty$ and $\mathsf{R}^i_\infty$ defined in \eqref{e.L_infty=} and~\eqref{e.R_infty=},
which gives the desired results in~\eqref{e.p} for $\gamma_{t,{q}}=\gamma_\infty$. The support of $\gamma_\infty$ is verified in~\eqref{e.gamma_infty(Q_2)=1}.
\end{proof}

We are now ready to prove Theorems~\ref{t} and~\ref{t2}.

\begin{proof}[Proof of Theorem~\ref{t2}]
We first verify that the sequence in~\eqref{e.(Df_j...)} is precompact.
This follows from the uniform $L^\infty$-bounds on the derivatives in Lemma~\ref{l.bound_der_f_j}, the compactness in the weak-$L^2$ topology, and Lemma~\ref{l.cvg_L^r} (allowing us to upgrade the weak convergence to the strong one). 

Next, to show~\eqref{e.t2}, we want to apply Proposition~\ref{p.2nd_eq_diff_pt}. 
Let $(t_{j,\eta,\eps},x_{j,\eta,\eps})_{j\in\N,\eta>0,\eps>0}$ be given by Lemma~\ref{l.touch_approx}.
Let $\seq$ be the sequence along which the sequence in~\eqref{e.(Df_j...)} converges to $(a,p)$. 
Then, \ref{i.p_ii} and~\ref{i.p_iii} follow from Lemma~\ref{l.touch_approx} and the assumption in Theorem~\ref{t2} that $(t_j,\lf_jx_j)$ converges to $(t,q)$. To verify this, we also use \rv{the basic results} in Lemma~\ref{l.basics_(j)}~\eqref{i.isometric} and Lemma~\ref{l.derivatives}~\eqref{i.lift_gradient}.
To verify~\ref{i.p_i}, we use the assumption $x_j \in \itr(\mcl Q^j_2)$ to find $\underline{x}_j$ such that a small open neighborhood at $x_j$ is contained in $\underline{x}_j+\mcl Q^j_2$. Then, \ref{i.p_i} follows from this~\eqref{e.(t,x)->(t,mu)}. Now, we can apply Proposition~\ref{p.2nd_eq_diff_pt} to find some $\gamma_{t,q}$ so that~\eqref{e.p} and thus~\eqref{e.t2} hold.

Lastly, we verify~\eqref{e.t2_particular}.
For each $j\in\seq$, since $f_j$ is differentiable at $(t_j,x_j)$ and $f_j$ is the solution to $\HJ(\itr(\mcl Q^j_2),\bxi^j;\psi^j)$ as in~\eqref{e.f_j=}, we can verify using Definition~\ref{d.vis_sol} that
\begin{align*}
    \partial_t f_j(t_j,x_j) = \bxi^j\Ll(\nabla f_j(t_j,x_j)\Rr)\stackrel{\eqref{e.j-projection}}{=}\bxi\Ll(\lf_j\Ll(\nabla f_j(t_j,x_j)\Rr)\Rr).
\end{align*}
Using the convergence of the sequence in~\eqref{e.(Df_j...)} to $(a,p)$ and the uniform bounds in Lemma~\ref{l.bound_der_f_j}, we can get $a= \bxi (p)$. Now, \eqref{e.t2_particular} follows from this and~\eqref{e.t2}.
\end{proof}

\begin{proof}[Proof of Theorem~\ref{t}]
For each $j\in\N$, let $f_j$ be given as in~\eqref{e.f_j=} and we consider $(t,\pj_jq)$ with the projection $\pj_j$ given in~\eqref{e.pj,lj=}. Since $f_j$ is Lipschitz, it is differentiable almost everywhere by Rademacher's theorem. We can choose a differentiable point $(t_j,x_j) \in (0,\infty)\times \itr(\mcl Q^j_2)$ of $f_j$ to satisfy
\begin{align*}
    \Ll|(t_j,x_j) - (t,\pj_j q)\Rr|_{\R\times\cL^j}\leq j^{-1}.
\end{align*}
By~\eqref{e.kappa^(j)} and Lemma~\ref{l.basics_(j)}~\eqref{i.isometric} and~\eqref{i.cvg}, we can verify that $(t_j,\lf_jx_j)$ converges to $(t,q)$ in $\R\times L^2$. Then, we can apply Theorem~\ref{t2} to get that~\eqref{e.t2} holds for some $(a,p)\in\R\times L^2$. Inserting the expressions of $a$ and $p$ on the second line of~\eqref{e.t2} to the first line of~\eqref{e.t2}, we get~\eqref{e.t}.
\end{proof}

\smallskip

\noindent \textbf{Acknowledgment.} The author thanks Jean-Christophe Mourrat for help discussions.

\noindent \textbf{Funding.} The author is funded by the Simons Foundation. 

\noindent
\textbf{Data availability.}
No datasets were generated during this work.

\noindent
\textbf{Conflict of interests.}
The author has no conflicts of interest to declare.

\noindent
\textbf{Competing interests.}
The author has no competing interests to declare.

\small
\bibliographystyle{plain}
\newcommand{\noop}[1]{} \def\cprime{$'$}

\end{document}